\DeclareMathOperator{\ch}{ch}
\newcommand{\calf}{\mathcal{F}}
\tikzset{dot/.style={
circle,
fill,
inner sep=0pt,
minimum size=3pt
}}
\title{Coloring bipartite graphs with semi-small list size}
\author{Daniel G.\ Zhu}
\address{Massachusetts Institute of Technology, Cambridge, MA 02139}
\email{zhd@mit.edu}
\begin{document}
\begin{abstract}
Recently, Alon, Cambie, and Kang introduced asymmetric list coloring of bipartite graphs, where the size of each vertex's list depends on its part. For complete bipartite graphs, we fix the list sizes of one part and consider the resulting asymptotics, revealing an invariant quantity instrumental in determining choosability across most of the parameter space. By connecting this quantity to a simple question on independent sets of hypergraphs, we strengthen bounds when a part has list size 2. Finally, we state via our framework a conjecture on general bipartite graphs, unifying three conjectures of Alon-Cambie-Kang.
\end{abstract}

\maketitle

\section{Introduction}
Let $G = (V, E)$ be an undirected, finite, and simple graph. The concept of \emph{list coloring} was introduced independently by Erd\H{o}s-Rubin-Taylor \cite{Erdos1980} and Vizing \cite{Vizing1976}, who defined a graph to be \emph{$k$-choosable} if, for any set of colors $C$ and any \emph{list assignment} $L \colon V \to \binom{C}{k}$, there exists a coloring $c \colon V \to C$, with $c(v) \in L(v)$ for all $v \in V$, such that $c(v) \neq c(v')$ whenever $vv' \in E$. The \emph{choosability} $\ch(G)$ (also known as the \emph{list chromatic number}) is the minimum $k$ such that $G$ is $k$-choosable. Observe that $\ch(G)$ is at least the \emph{chromatic number} $\chi(G)$, by considering the case where $L$ is a constant function.

When $G$ is bipartite, $\chi(G) \leq 2$, but no such absolute bound holds for the choosability; in fact, Erd\H{o}s-Rubin-Taylor \cite{Erdos1980} showed in 1980 that $\ch(K_{n, n}) \sim \log_2 n$. After considering the choosability of random bipartite graphs, Alon and Krivelevich conjectured in 1998 that a similar bound holds for general bipartite graphs in terms of the maximum degree $\Delta$:
\begin{conj}[Alon-Krivelevich \cite{Alon1998}] \label{conj:ak}
If $G$ is bipartite, then $\ch(G) = O(\log \Delta)$.
\end{conj}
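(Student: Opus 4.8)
Because a complete bipartite graph $K_{\Delta,\Delta}$ is $\Delta$-degenerate, the trivial greedy bound $\ch(G)\le\Delta+1$ cannot be improved by a degeneracy argument, so any approach must be genuinely global. The natural starting point is the probabilistic argument behind $\ch(K_{n,n})\sim\log_2 n$. Write $G=(A\cup B,E)$ and, after shrinking, assume every list has size exactly $k$. Sample a set $S$ of colors by including each color independently with probability $1/2$, and attempt to color every $v\in A$ from $L(v)\cap S$ and every $u\in B$ from $L(u)\setminus S$. Since $A$ and $B$ are independent sets while the colors used on $A$ lie in $S$ and those used on $B$ lie outside $S$, the outcome is automatically a proper coloring; the only obstruction is a ``bad event'' $\mathcal{B}_v$ that $v$'s restricted list is empty, which has probability exactly $2^{-k}$. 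A union bound over $V$ yields only $\ch(G)=O(\log|V|)$, so to obtain the conjectured dependence on $\Delta$ one wants the Lov\'{a}sz Local Lemma. The key point is that $\mathcal{B}_v$ depends only on the restriction of the sample to the colors in $L(v)$, so $\mathcal{B}_v$ and $\mathcal{B}_w$ are independent whenever $L(v)\cap L(w)=\emptyset$; hence, if $D$ denotes the maximum number of vertices whose list meets a fixed list, the symmetric Local Lemma succeeds once $e\cdot 2^{-k}(D+1)\le 1$, i.e.\ once $k\ge\log_2(D+1)+O(1)$.

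The plan therefore reduces to a structural statement: preprocess $(G,L)$ so that every color lies in at most $\operatorname{poly}(\Delta)$ lists — which forces $D\le k\cdot\operatorname{poly}(\Delta)=\operatorname{poly}(\Delta)$ — while every list shrinks by at most a constant factor, after which the Local Lemma finishes. To eliminate an over-popular color $\gamma$, let $V_\gamma$ be the set of vertices whose list contains $\gamma$; since $G[V_\gamma]$ has maximum degree at most $\Delta$, it has an independent set of size $|V_\gamma|/(\Delta+1)$, and in fact a \emph{maximal} independent set $I\subseteq V_\gamma$ has the useful property that coloring $I$ by $\gamma$ and deleting it causes $\gamma$ to vanish from every surviving list, since each remaining vertex of $V_\gamma$ is adjacent to $I$. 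Iterating over all over-popular colors would then produce an instance of bounded conflict degree.

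The main obstacle — and, I expect, the reason the conjecture is still open — is that this peeling step is far too lossy: a vertex may carry up to $k$ over-popular colors in its list, and a single round of the procedure removes one of them from its list unless the vertex is itself colored, so the process can wipe out an entire list of size $\Theta(\log\Delta)$ before the vertex is ever assigned a color. Repairing this seems to require replacing the one-shot split by an iterative, semi-random (``nibble'') scheme: in each round one colors a small random independent subset and deletes it, draining each over-popular color gradually while tracking, for every vertex, the ratio between its residual list size and its residual conflict degree. The hard part is then a concentration-of-measure argument showing that, with positive probability, no vertex's list ever becomes small relative to its residual degree — and it must hold uniformly over \emph{adversarial} list assignments, for which the conflict hypergraph (colors as hyperedges on $V$) is essentially arbitrary and can exhibit long-range structure bearing no relation to $G$ itself. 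It seems worth recording where the true bottleneck sits in the language of the present paper: the worst local configurations resemble complete bipartite graphs with one bounded side, so a sufficiently strong bound on the invariant studied here, specialized to such $K_{m,\Delta}$-type pieces, would feed directly into the residual-degree bookkeeping that the nibble requires.
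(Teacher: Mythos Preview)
The statement you were asked to prove is an \emph{open conjecture}: the paper does not prove it, and explicitly records that the best known bound is Johansson's $\ch(G)=O(\Delta/\log\Delta)$ for triangle-free $G$. So there is no ``paper's own proof'' to compare against, and your proposal should be read on its own merits as an attempted proof of a famous open problem.

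On those merits, what you have written is not a proof but an honest roadmap of why the natural approach stalls --- and you say so yourself. The random-bipartition plus Local Lemma idea is standard and correctly analyzed up to the point where you need the dependency degree $D$ to be $\operatorname{poly}(\Delta)$. Your preprocessing step, however, is exactly where the argument breaks: peeling over-popular colors via maximal independent sets can strip a vertex's entire list before it is colored, as you note, and the ``nibble with concentration'' repair you sketch is precisely the missing ingredient that nobody currently knows how to supply against adversarial list assignments. The closing paragraph gestures toward the paper's invariant $\xi$ as a possible bookkeeping device, but the paper's results are for \emph{complete} bipartite graphs, and the content of Conjecture~\ref{conj:xig} (which would be the relevant bridge) is itself open and, as the paper observes, would already imply substantial progress on Alon--Krivelevich. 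In short: your write-up is a reasonable survey of the obstruction, but it does not close the gap, and no such closure is expected here since the statement is a conjecture, not a theorem.
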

So far, relatively little progress has been made towards this conjecture beyond the trivial bound $\ch(G) = O(\Delta)$. In 1996, Johannson \cite{Johansson1996} (see also \cite{Molloy2002}) proved a bound of $O(\Delta/\log \Delta)$ for all triangle-free $G$, while in 2019 Molloy \cite{Molloy2019} found a different proof of this result, improving the constant.

Recently, Alon, Cambie, and Kang \cite{Alon2020} introduced an asymmetric variant of list coloring for bipartite graphs. Given a bipartite graph $G$ with bipartition $A \sqcup B$, we call $G$ \emph{$(k_A, k_B)$-choosable} if each list assignment $L \colon A \to \binom{C}{k_A}, B \to \binom{C}{k_B}$ admits a coloring, following the same rules as $k$-choosability. (In this paper every bipartite graph will come with an explicit ordered bipartition.) Letting $\Delta_A$ and $\Delta_B$ be the maximum degrees of vertices among $A$ and $B$, respectively, Alon, Cambie, and Kang derive various conditions for the choosability of complete bipartite graphs in terms of $k_A$, $k_B$, $\Delta_A$, $\Delta_B$. They then proceed to conjecture that similar bounds hold for general bipartite graphs, which can be construed as asymmetric generalizations of \cref{conj:ak}:
\begin{conj}[Alon-Cambie-Kang \cite{Alon2020}] \label{conj:ack}
Let $G$ be a bipartite graph with $\Delta_A, \Delta_B \geq 2$ and let $k_A$ and $k_B$ be positive integers. Then
\begin{parts}
\item[(a)] for any $\eps > 0$ there is a $\Delta_0$ such that $G$ is $(k_A, k_B)$-choosable whenever $k_A \geq \Delta_A^\eps$, $k_B \geq \Delta_B^\eps$, and $\Delta_A, \Delta_B \geq \Delta_0$;
\item[(b)] there exists an absolute $C > 0$ such that $G$ is $(k_A, k_B)$-choosable whenever $k_A \geq C \log \Delta_B$ and $k_B \geq C\log \Delta_A$;
\item[(c)] there exists an absolute $C > 0$ such that $G$ is $(k_A, k_B)$-choosable whenever $\Delta_A = \Delta_B = \Delta$ and $k_B \geq C(\Delta/\log \Delta)^{1/k_A} \log \Delta$.
\end{parts}
\end{conj}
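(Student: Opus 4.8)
Before outlining an approach I should be upfront that this is stated as a \emph{conjecture}, not a theorem: I do not expect it to be proved in this paper (the abstract claims only that the paper \emph{states} a unifying conjecture), and I do not know a proof. So what follows is a plan of attack together with an honest account of where it stalls. The natural reduction is to complete bipartite graphs: the conjecture is essentially the assertion that a bipartite $G$ with part degrees $\Delta_A,\Delta_B$ is no harder to list-colour than the complete bipartite graph with the same part degrees, for which this paper's asymptotics already control $(k_A,k_B)$-choosability. For that complete bipartite case I would use the standard reformulation --- a list assignment $L$ is colourable exactly when the colour set splits as $C=C_A\sqcup C_B$ with every $A$-list meeting $C_A$ and every $B$-list meeting $C_B$ --- turn it into an extremal question on set systems, and run the paper's analysis (fix $k_A$, let the other parameters grow, track the invariant it isolates) through regime~(a) (polynomial lists), regime~(b) (logarithmic lists), and regime~(c) (the interpolating regime where one side is small); when $\min(k_A,k_B)=2$ I would invoke the paper's translation to independent sets of hypergraphs to pin down the constant. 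I would expect all of this to recover the stated thresholds for complete bipartite graphs.

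The actual difficulty is the passage from that complete bipartite graph to an arbitrary bipartite $G$ with the same part degrees, since for general $G$ the only available tool is the iterative probabilistic method of Johansson and Molloy: colour in rounds, giving each still-uncoloured vertex a tentative colour uniform from its surviving list, keeping it when no neighbour repeats it, and using the Lov\'asz Local Lemma (or an entropy-compression substitute) to show that with positive probability the ratio of surviving list size to surviving degree stays healthy at every vertex. The asymmetry is exploited by scheduling the two parts differently --- colour the part whose lists are large relative to its neighbourhoods more aggressively, so that a vertex on the other side is typically hit by far fewer committed colours than the size of its list. The quantity that must stay bounded across all rounds is a local, iterated avatar of the invariant this paper isolates, which is exactly why I would aim the whole argument at the paper's single unifying conjecture: one clean inequality in that invariant ought to imply all three parts simultaneously.

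The hard part is the one that has blocked \cref{conj:ak} for over two decades: carrying the wasteful-colouring iteration all the way down when the lists are only logarithmic in $\Delta$ (or, in regime~(c), a small power of $\Delta$). The local-lemma slack degrades as the lists shrink, and over $\Omega(\log\Delta)$ rounds one cannot afford even a constant-factor loss per round; the asymmetry buys a little room but does not break this barrier, so a complete proof would essentially have to contain a proof of the Alon--Krivelevich conjecture itself. What I would realistically expect to be able to deliver is: (i) the complete-bipartite cases in full, via the set-system analysis; (ii) a weakened form of regime~(a) for all bipartite graphs, where the lists are wide enough that the iteration terminates after $O(1)$ rounds rather than $\Omega(\log\Delta)$; and (iii) improved constants when $\min(k_A,k_B)=2$ from the hypergraph lemma. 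The logarithmic and interpolating regimes for general $G$ I would expect to remain open --- consistent with the paper presenting them as conjectures.
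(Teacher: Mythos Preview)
Your assessment is correct in its main point: \cref{conj:ack} is stated as a conjecture, and the paper does not prove it. There is no ``paper's own proof'' to compare against. The paper's only formal contribution concerning this statement is the final theorem of \cref{sec:ext}, which shows that \cref{conj:xig} (the unifying conjecture on $\xi_g(k_A)$) implies all three parts of \cref{conj:ack}. You anticipate this reduction in your middle paragraph (``one clean inequality in that invariant ought to imply all three parts simultaneously''), and that is exactly what the paper carries out: assuming $\xi_g(k) > c^k$ for some $c>0$, it verifies each regime by elementary manipulation of the inequality $\Delta_B^{1/k_A}\log(\Delta_A)^{1-1/k_A} < ck_B$.

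Where your proposal diverges from the paper is in scope rather than correctness. Your extended discussion of a Johansson--Molloy iterative random colouring, local-lemma slack, and round scheduling is a reasonable sketch of why the conjecture is hard, but none of it appears in the paper; the paper does not attempt any direct attack on general bipartite $G$ and explicitly notes that even $\xi_g(k)>0$ for a single $k>1$ would be major progress on \cref{conj:ak}. Your items (i)--(iii) also overreach slightly relative to the paper: the paper handles complete bipartite graphs via the $\xi$-invariant and amplification, but it does not claim a ``weakened form of regime~(a) for all bipartite graphs'' nor any result for general $G$ beyond the conditional implication. In short, your meta-assessment that this remains open is right and matches the paper; your speculative probabilistic programme is plausible commentary but is not something the paper pursues or claims.
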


\begin{rmk}
The condition that $\Delta_A, \Delta_B \geq 2$ is not present in \cite{Alon2020}; we make it here to avoid division-by-zero issues with $\log \Delta_A$ and $\log \Delta_B$.
\end{rmk}

While \cref{conj:ack} and its analogous theorem on complete bipartite graphs give asymptotic bounds on specific parts of the four-dimensional parameter space $(\Delta_A, \Delta_B, k_A, k_B)$, a more holistic treatment is lacking.

In this paper, we generalize Alon, Cambie, and Kang's work to apply to a much wider swath of the parameter space. We start by considering the choosability of complete bipartite graphs $G = K_{\Delta_B, \Delta_A}$ when $k_A$ is held fixed, finding that for each fixed $k_A$, the $(k_A, k_B)$-choosability is determined, up to a constant factor, by the quantity $\xi(\Delta_A, \Delta_B, k_A, k_B) = \Delta_B \log(\Delta_A)^{k_A-1}/k_B^{k_A}$. (We will henceforth omit the parameters of $\xi$ if clear from context.) Specifically, we show the following result:

\begin{thm} \label{thm:gensandwich}
For every positive integer $k_A$, there exist positive constants $C_1(k_A)$ and $C_2(k_A)$ such that, for all complete bipartite graphs $G = K_{\Delta_B, \Delta_A}$ and positive integers $k_B$ with $\Delta_A \geq k_A$ and $\Delta_B \geq k_B$, $G$ is $(k_A, k_B)$-choosable whenever $\xi < C_1$ and not $(k_A, k_B)$-choosable whenever $\xi > C_2$.
\end{thm}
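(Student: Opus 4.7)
The plan is to recast $(k_A, k_B)$-choosability of $K_{\Delta_B, \Delta_A}$ with lists $L(a), L(b) \subseteq C$ as a hypergraph bipartition problem. Writing $\calf_A = \{L(a) : a \in A\}$ and $\calf_B = \{L(b) : b \in B\}$ for the induced $k_A$- and $k_B$-uniform hypergraphs on the color set $C$, the assignment is colorable if and only if there is a bipartition $C = V_0 \sqcup V_1$ such that no edge of $\calf_A$ is contained in $V_0$ and no edge of $\calf_B$ is contained in $V_1$: in the forward direction take $V_0 = \{c(b) : b \in B\}$, and in the backward direction color each $a$ with a color in $L(a) \cap V_1$ and each $b$ with a color in $L(b) \cap V_0$. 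The two directions of the theorem then reduce to existence and non-existence of such a bipartition against adversarial lists.

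For the upper bound ($\xi < C_1 \Rightarrow$ choosable), I would analyze a biased random bipartition, putting each color into $V_1$ independently with probability $p = 1 - c(k_A)\,\Delta_B^{-1/k_A}$. On the $A$-side, each $\calf_A$-edge lies in $V_0$ with probability $(1-p)^{k_A} = c(k_A)^{k_A}/\Delta_B$, so a union bound over $a$ handles this side comfortably. On the $B$-side, each $\calf_B$-edge lies in $V_1$ with probability $p^{k_B} \le \exp(-c(k_A) k_B \Delta_B^{-1/k_A})$; a naive union bound over $b$ demands $k_B \gtrsim \Delta_B^{1/k_A} \log \Delta_A$, which misses the target by a factor of $(\log \Delta_A)^{1/k_A}$. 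To shave this factor I would first discard colors unused by any list (forcing $|C| \le k_A \Delta_B + k_B \Delta_A$), bound the maximum $\calf_B$-degree of a single color, and then apply the Lov\'asz Local Lemma (or a Moser--Tardos style alteration) to the $B$-side bad events, so that the limited dependency density absorbs the extra logarithm. Overcoming this logarithmic loss is the main technical obstacle.

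For the lower bound ($\xi > C_2 \Rightarrow$ not choosable), I would use a random construction: set $C = [M]$ for some $M$ to be optimized, and take each $L(a)$ and each $L(b)$ to be an independent uniform subset of the appropriate size. For a bipartition with $|V_0| = \alpha M$, the probabilities that no $\calf_A$-edge lies in $V_0$ and that no $\calf_B$-edge lies in $V_1$ are roughly $\exp(-\Delta_B \alpha^{k_A})$ and $\exp(-\Delta_A (1-\alpha)^{k_B})$, respectively, and are independent. Combining with $\binom{M}{\alpha M} \le e^{M H(\alpha)}$ for the natural binary entropy $H$, a union bound over bipartitions succeeds provided
\[
\Delta_B \alpha^{k_A} + \Delta_A (1-\alpha)^{k_B} > M H(\alpha) + \Omega(\log M)
\]
for every $\alpha \in (0,1)$. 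The binding $\alpha$ is $\alpha^\ast \asymp (\log \Delta_A)/k_B$, where the two exponentials balance; after choosing $M$ to match this scale, evaluating at $\alpha^\ast$ reproduces the threshold $\xi > C_2$, and the other regimes of $\alpha$ follow from routine monotonicity in $\alpha$.
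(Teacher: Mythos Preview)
Your reformulation as a hypergraph bipartition problem is correct and matches the paper's \cref{lem:colorgraph}. However, the two halves diverge sharply from the paper, and one of them has a real gap.

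\textbf{Choosability ($\xi<C_1$).} The fix you propose for the $(\log\Delta_A)^{1/k_A}$ loss does not work as stated. The $B$-side bad events $\{L(b)\subseteq V_1\}$ are dependent whenever $L(b)\cap L(b')\neq\emptyset$, and the adversary is free to put a single color in every list $L(b)$, so the dependency degree can be $\Delta_A-1$. There is nothing to ``bound the maximum $\calf_B$-degree of a single color'' against; discarding unused colors does not help. With dependency degree $\Theta(\Delta_A)$, LLL buys nothing over the union bound, and a Moser--Tardos resampling does not improve the exponent either. The paper recovers the missing logarithm by a different mechanism: it reserves each color for $B$ with probability $p\asymp(\log\Delta_A)/k_B$ (not $\Delta_B^{-1/k_A}$), uses Markov's inequality on the \emph{number} of $A$-vertices that lose all colors, and then performs an alteration on the $A$-side by unreserving one color per failed $A$-vertex; a Chernoff bound on the $B$-side then tolerates those few unreservations. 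That argument only directly handles $\Delta_A$ large, and the paper closes the remaining cases with a graph-amplification step (the $r$-fold blowup $K_{\Delta_B,\Delta_A}\mapsto K_{r\Delta_B,\Delta_A^r}$ preserves $\xi$), which your outline does not mention.

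\textbf{Non-choosability ($\xi>C_2$).} Here your random-list plus union-bound-over-bipartitions approach is a genuinely different route from the paper's: the paper gives an explicit block construction (\cref{prop:counterexamples}) and then rounds parameters (\cref{prop:smoothupper}). Your probabilistic approach is plausible and, if the optimization over $M$ and $\alpha$ is carried out carefully, should yield a valid $C_2(k_A)$; but the sketch leaves real work undone (the approximation $\binom{\alpha M}{k_A}/\binom{M}{k_A}\approx\alpha^{k_A}$ must be controlled uniformly in $\alpha$, and ``routine monotonicity'' does not by itself handle the boundary regimes $\alpha\to 0,1$). The paper's explicit construction is shorter and gives concrete constants, while your approach would be more flexible if one later wanted to randomize over non-complete host graphs.
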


Moreover, the quantity $\xi$ can often determine the $(k_A, k_B)$-choosability of a complete bipartite graph $G$ in an asymptotically tight manner. By defining $\xi_m(k_A)$ to be the infimum value of $\xi$ over all triples of positive integers $(k_B, \Delta_A, \Delta_B)$ such that $K_{\Delta_B,\Delta_A}$ is not $(k_A, k_B)$-choosable, we have the following results:
\begin{thm} \label{thm:lopsidedasymptotic}
Fix a positive integer $k_A$. For all $\Delta_A \geq k_A$ there exists a constant $c(\Delta_A)$ such that the smallest $\Delta_B$ such that $G = K_{\Delta_B, \Delta_A}$ is not $(k_A, k_B)$-choosable is asymptotic to $c(\Delta_A) k_B^{k_A}$ as $k_B \to \infty$. Moreover, $c(\Delta_A) \sim \xi_m(k_A)/\log(\Delta_A)^{k_A - 1}$ as $\Delta_A \to \infty$.
\end{thm}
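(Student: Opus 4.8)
The plan is to pass to the hypergraph reformulation of complete‑bipartite choosability and to identify $c(\Delta_A)$ with a scale‑invariant combinatorial quantity. Write $G = K_{\Delta_B,\Delta_A}$ with the part of size $\Delta_B$ carrying the $k_A$-lists. A proper colouring is a choice of one colour from each $k_A$-list whose union $S$ contains no $k_B$-list; since ``every such $S$ contains some $k_B$-list'' is monotone under enlarging $S$, it need only be checked on the minimal $S$, which are the complements of the maximal independent sets of the $k_A$-uniform hypergraph $\mathcal H$ of $k_A$-lists. Hence $G$ is not $(k_A,k_B)$-choosable exactly when there are a $k_A$-uniform $\mathcal H$ with $\lvert\mathcal H\rvert = \Delta_B$ edges and subsets $L_1,\dots,L_{\Delta_A}$ of $V(\mathcal H)$ with $\lvert L_j\rvert \ge k_B$ such that every independent set of $\mathcal H$ is disjoint from some $L_j$; call such data a \emph{blocking scheme of order $\Delta_A$} and set its \emph{cost} to $\operatorname{cost}(\sigma) := \lvert\mathcal H\rvert / \bigl(\min_j \lvert L_j\rvert\bigr)^{k_A}$. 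Let $c(\Delta_A)$ be the infimum of $\operatorname{cost}(\sigma)$ over all order-$\Delta_A$ blocking schemes; a ``disjoint rainbow'' scheme shows $c(\Delta_A) \le 1$, and \cref{thm:gensandwich} shows $c(\Delta_A) \ge C_1(k_A)/\log(\Delta_A)^{k_A-1} > 0$.

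With $N(\Delta_A,k_B)$ denoting the least $\Delta_B$ for which $K_{\Delta_B,\Delta_A}$ fails to be $(k_A,k_B)$-choosable, I would prove $N(\Delta_A,k_B)/k_B^{k_A} \to c(\Delta_A)$. The inequality $\ge$ is immediate, since the scheme witnessing $N(\Delta_A,k_B)$ has cost at most $N(\Delta_A,k_B)/k_B^{k_A}$. For $\le$, given any order-$\Delta_A$ scheme $\sigma_0$ with minimum list size $\ell_0$, I would \emph{blow it up} by an integer $r$: replace each vertex by a cloud of $r$ vertices, each edge by the $r^{k_A}$ transversal $k_A$-sets across the corresponding clouds, and each list by the union of the clouds over its vertices. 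Projecting an independent set of the blow-up to the set of clouds it meets shows this is again an order-$\Delta_A$ blocking scheme, with the same cost and minimum list size $r\ell_0$; together with the elementary monotonicity ``not $(k_A,k_B)$-choosable $\Rightarrow$ not $(k_A,k_B')$-choosable for $k_B' \le k_B$'', taking $r = \lceil k_B/\ell_0\rceil$ gives $N(\Delta_A,k_B) \le \operatorname{cost}(\sigma_0)\,(k_B+\ell_0)^{k_A}$, so $\limsup_{k_B\to\infty} N(\Delta_A,k_B)/k_B^{k_A} \le \operatorname{cost}(\sigma_0)$; the infimum over $\sigma_0$ finishes the first assertion.

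For the ``moreover'', the same normalisation shows $\xi_m(k_A) = \inf_{\Delta_A} \Phi(\Delta_A)$ where $\Phi(\Delta_A) := c(\Delta_A)\log(\Delta_A)^{k_A-1}$, so it suffices to prove $\Phi(\Delta_A) \to \inf_{\Delta_A'}\Phi(\Delta_A')$ as $\Delta_A \to \infty$; the bound $\Phi \ge \xi_m(k_A)$ is trivial, and only $\limsup \Phi \le \xi_m(k_A)$ needs work. Fix $\eps > 0$ and an order-$p$ blocking scheme $\sigma^*$ with $\operatorname{cost}(\sigma^*)\log(p)^{k_A-1} < \xi_m(k_A) + \eps$. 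The crucial move is an \emph{amplification}: form $d$ vertex-disjoint copies of $\sigma^*$, let $\mathcal H$ be their disjoint union, and take as lists all sets $(L_{j_1})^{(1)} \cup \cdots \cup (L_{j_d})^{(d)}$ for $(j_1,\dots,j_d) \in \{1,\dots,p\}^d$, where $(L_j)^{(i)}$ is the image of $L_j$ in the $i$-th copy. An independent set of $\mathcal H$ restricts to an independent set of each copy, which misses some list of $\sigma^*$ there, and the tuple of those indices names a list it misses overall; hence this is a blocking scheme of order $p^d$ with $d\,\lvert\mathcal H_{\sigma^*}\rvert$ edges and minimum list size $d$ times that of $\sigma^*$, so of cost $\operatorname{cost}(\sigma^*)/d^{k_A-1}$. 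Its $\Phi$-value is therefore $\bigl(\operatorname{cost}(\sigma^*)/d^{k_A-1}\bigr)\,(d\log p)^{k_A-1} = \operatorname{cost}(\sigma^*)\log(p)^{k_A-1}$, unchanged; padding the list collection by repetition to attain any order between $p^d$ and $p^{d+1}$ multiplies $\Phi$ by at most $(1+1/d)^{k_A-1} \to 1$. Thus $\limsup_{\Delta_A\to\infty}\Phi(\Delta_A) \le \operatorname{cost}(\sigma^*)\log(p)^{k_A-1} < \xi_m(k_A)+\eps$, and $\eps \to 0$ completes the proof.

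The monotonicity and blow-up arguments are routine once the reformulation is in hand; the substance lies in two places. First, the cost must be normalised so that blowing up a blocking scheme is \emph{exactly} cost-preserving, which is what upgrades the $O(1)$-sandwich of \cref{thm:gensandwich} to an honest limiting constant $c(\Delta_A)$. Second — and this is the main obstacle to anticipate — one needs the amplification construction together with the observation that, since it raises the order to the $d$-th power while dividing the cost by $d^{k_A-1}$, the quantity $\operatorname{cost}\cdot\log(\text{order})^{k_A-1}$ is a conserved ``tensor invariant'' of blocking schemes; this conservation is precisely what drives $\Phi(\Delta_A)$ down to its infimum and pins the asymptotic constant to $\xi_m(k_A)$. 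The remaining manipulations are bookkeeping with infima and $\limsup$'s.
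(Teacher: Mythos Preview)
Your proof is correct and follows essentially the same route as the paper's. Your ``blow-up'' (replacing each vertex by an $r$-cloud and each edge by its $r^{k_A}$ transversals) is exactly the $r^{k_A}$-fold expansion $G^{\curlywedge r^{k_A}}$ of \cref{lem:amplification}, and your ``amplification'' (taking $d$ disjoint copies with product lists) is exactly the $d$-fold blowup $G^{\curlyvee d}$; the paper runs the identical two-step argument, just without passing through the hypergraph reformulation of \cref{lem:colorgraph}. Your explicit identification of $\operatorname{cost}\cdot\log(\text{order})^{k_A-1}$ as a conserved quantity under amplification is a clean way to phrase what the paper does implicitly, and your padding step is equivalent to the paper's use of monotonicity of $c(\Delta_A)$.
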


\begin{thm} \label{thm:eqasymptotic}
Fix a positive integer $k_A$. The minimum $\Delta$ such that $K_{\Delta,\Delta}$ is not $(k_A, k_B)$-choosable is asymptotic to $\xi_m(k_A) k_B^{k_A}/(k_A\log(k_B))^{k_A - 1}$ as $k_B \to \infty$.
\end{thm}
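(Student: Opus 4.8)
The plan is to pin down the minimum $\Delta$ from both sides, the decisive arithmetic fact being that any $\Delta$ of the claimed order is polynomial in $k_B$ (of degree $k_A$) and hence satisfies $\log \Delta = k_A\log k_B - (k_A-1)\log\log k_B + O(1) = (1+o(1))\,k_A\log k_B$. Writing $\xi(\Delta_B,\Delta_A) = \Delta_B\log(\Delta_A)^{k_A-1}/k_B^{k_A}$ for the invariant of \cref{thm:gensandwich}, this gives, for such $\Delta$, that $\xi(\Delta,\Delta) = (1+o(1))\,\Delta\,(k_A\log k_B)^{k_A-1}/k_B^{k_A}$; so the equation $\xi(\Delta,\Delta) = \xi_m$ is solved by $\Delta = (1+o(1))\,\xi_m\,k_B^{k_A}/(k_A\log k_B)^{k_A-1}$, and everything reduces to showing the true threshold matches this solution up to a factor $1\pm\eps$.

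For the lower bound, fix $\eps>0$ and put $\Delta = \bigl\lfloor(1-\eps)\,\xi_m\,k_B^{k_A}/(k_A\log k_B)^{k_A-1}\bigr\rfloor$; for $k_B$ large this exceeds $\max(k_A,k_B)$. The computation above gives $\xi(\Delta,\Delta) = (1-\eps)(1+o(1))\,\xi_m < \xi_m$ once $k_B$ is large, and since $\xi_m$ is, by definition, a lower bound for $\xi$ over all non-$(k_A,k_B)$-choosable complete bipartite graphs, $K_{\Delta,\Delta}$ must be $(k_A,k_B)$-choosable. Hence the smallest non-choosable $\Delta$ is at least $(1-\eps)\,\xi_m\,k_B^{k_A}/(k_A\log k_B)^{k_A-1}$ for all large $k_B$.

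For the upper bound I must exhibit, for large $k_B$, a non-choosable $K_{\Delta,\Delta}$ with $\Delta\le(1+\eps)\,\xi_m\,k_B^{k_A}/(k_A\log k_B)^{k_A-1}$. One cannot simply quote \cref{thm:lopsidedasymptotic}: with $\Delta_A$ fixed it only produces non-choosable $K_{\Delta_B,\Delta_A}$ with $\Delta_B = \Theta_{\Delta_A}(k_B^{k_A})$, too big by a polylogarithmic factor $(k_A\log k_B)^{k_A-1}$, so one really needs $\Delta_A$ to grow with $k_B$ with $\log\Delta_A\sim k_A\log k_B$. I would therefore revisit the construction behind \cref{thm:lopsidedasymptotic}: non-choosability of $K_{\Delta_B,\Delta_A}$ (with $|A| = \Delta_B$, $|B| = \Delta_A$) follows as soon as one places a $k_A$-uniform hypergraph $\mathcal A$ with $\Delta_B$ edges and a $k_B$-uniform hypergraph $\mathcal B$ with $\Delta_A$ edges on a common vertex set $[q]$ with $\alpha(\mathcal A) + \alpha(\mathcal B) \le q-1$, since then every vertex cover of $\mathcal B$ has more than $\alpha(\mathcal A)$ vertices and so spans an $\mathcal A$-edge, which obstructs every proper coloring (this is the ``simple question on independent sets of hypergraphs'' of the abstract). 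Taking $\mathcal A$, $\mathcal B$ with the fewest edges for prescribed independence numbers and setting $q = \Theta(k_B)$, $\alpha(\mathcal A) = \Theta(\log k_B)$, $\alpha(\mathcal B) = q - 1 - \alpha(\mathcal A)$, one balances $|\mathcal A| = |\mathcal B| = \Delta$: here $|\mathcal A|$ is governed by the bulk Turán-type value $\Theta_{k_A}\bigl(q^{k_A}/\alpha(\mathcal A)^{k_A-1}\bigr)$ while $|\mathcal B|$ is a covering number of order $\bigl(\tfrac{q}{q-k_B}\bigr)^{(1+o(1))\alpha(\mathcal A)}$ (its independence number being within $O(\log k_B)$ of $q$), and balancing forces $\alpha(\mathcal A) = (1+o(1))\,k_A\log k_B/\log\tfrac{q}{q-k_B}$ and $\Delta = (1+o(1))\,F(q/k_B)\,k_B^{k_A}/(k_A\log k_B)^{k_A-1}$ for an explicit function $F$ on $(1,\infty)$. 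Minimizing $F$ gives a finite value, which must equal $\xi_m$ (one checks the lopsided constructions already sweep out the same values of $F$, so no non-choosable instance does better), giving exactly the bound needed.

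The crux is this last step: making the two hypergraph estimates — the bulk bound with its correct constant, and the Rödl-nibble-type covering estimate near the boundary where $\alpha(\mathcal B)$ is within $O(\log k_B)$ of $q$ — sufficiently uniform in $q$, $\alpha(\mathcal A)$, and $k_B$ that the optimization genuinely converges to the infimum $\xi_m$ along the diagonal $\Delta_A = \Delta_B$; this amounts to a version of \cref{thm:lopsidedasymptotic} uniform over $\Delta_A$ up to scale $k_B^{k_A}$. (One might also worry that non-choosability certificates not of the independent-set form — where $\alpha(\mathcal A) + \alpha(\mathcal B)\ge q$ yet no compatible bipartition of $[q]$ exists — could push the threshold lower, but this is ruled out by the lower-bound step, which is blind to the structure of the extremal certificates.) The edge case $k_A = 1$, where the claimed asymptotic reads $\Delta\sim\xi_m\,k_B$, is straightforward to treat directly.
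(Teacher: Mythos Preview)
Your lower bound is fine and is essentially what the paper does: anything with $\xi<\xi_m$ is choosable by definition of $\xi_m$, and the arithmetic $\log\Delta\sim k_A\log k_B$ finishes it.

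The upper bound has a genuine gap. You try to manufacture diagonal witnesses directly, via a parametrized family of pairs $(\mathcal A,\mathcal B)$ satisfying $\alpha(\mathcal A)+\alpha(\mathcal B)\le q-1$, then optimize to a value $\min F$ and assert $\min F=\xi_m$. But $\xi_m$ is an \emph{abstract infimum} over all non-choosable complete bipartite graphs; nothing guarantees that extremal (or near-extremal) witnesses are of your restricted form, nor that your Tur\'an-type and covering estimates come with the correct leading constants. Your parenthetical defense (``this is ruled out by the lower-bound step'') is backwards: the lower bound only says the diagonal threshold is at least the claimed value; it says nothing about whether your particular family reaches that value. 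If $\min F>\xi_m$, your argument proves only the weaker upper bound $(1+\eps)(\min F)\,k_B^{k_A}/(k_A\log k_B)^{k_A-1}$.

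The paper sidesteps this completely via graph amplification. Both operations of \cref{cor:completeamp} preserve $\xi$ exactly: the blowup sends $(\Delta_B,\Delta_A,k_B)\mapsto(r\Delta_B,\Delta_A^r,rk_B)$ and the expansion sends it to $(r^{k_A}\Delta_B,\Delta_A,rk_B)$, and in each case one checks $\xi$ is unchanged. So one takes an \emph{arbitrary} non-choosable $K_{\Delta_B,\Delta_A}$ with $\xi$ within $\eps$ of $\xi_m$ (which exists by definition of infimum), then chooses $r_1=\lfloor(\Delta\log\Delta_A/(\Delta_B\log\Delta))^{1/k_A}\rfloor$ and $r_2=\lfloor\log\Delta/\log\Delta_A\rfloor$ so that $(G^{\curlywedge r_1^{k_A}})^{\curlyvee r_2}$ lands inside $K_{\Delta,\Delta}$ with $k_B$ scaled by $r_1r_2$. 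This yields $k(\Delta)\ge r_1r_2k_B\sim(\Delta\log(\Delta)^{k_A-1}/\xi)^{1/k_A}$, and letting $\xi\to\xi_m$ gives the upper bound with the correct constant. The point is that amplification transports \emph{whatever} witnesses $\xi_m$ to the diagonal without ever needing to know what that witness looks like; your approach would require identifying the extremal structure, which the paper neither knows nor needs.
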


The proofs of \cref{thm:lopsidedasymptotic} and \cref{thm:eqasymptotic} rely on procedures of \emph{graph amplification}, which take graphs that are not $(k_A, k_B)$-choosable to larger graphs that are not $(k_A, k_B')$-choosable, for some $k_B' > k_B$. These techniques are likely useful even when generalizing to the list coloring of bipartite graphs which are not complete.

Owing to their definition over a wide variety of possible graphs and lists, determining the value of $\xi_m(k_A)$ for even a single value of $k_A > 1$ is already an interesting question. Moreover, determining the value of $\xi_m(k_A)$ corresponds to a natural question involving the independent sets of hypergraphs. (Recall that at an independent set of a hypergraph is a subset of the vertices that does not contain any edge.)
\begin{lem} \label{lem:colorgraph}
The graph $G = K_{\Delta_B, \Delta_A}$ is $(k_A, k_B)$-choosable if and only if the following statement is true: ``For all $k_A$-uniform hypergraphs $H$ with $\Delta_B$ edges and families $\calf$ consisting of $\Delta_A$ subsets of size $k_B$ of the vertices of $H$, there is an independent set of $H$ that intersects every element in $\calf$.''
\end{lem}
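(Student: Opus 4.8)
The plan is to translate colorings of $G = K_{\Delta_B, \Delta_A}$ directly into hypergraph language, using the colors themselves as the vertex set. Write the bipartition as $A \sqcup B$, where the $\Delta_B$ vertices of $A$ carry lists of size $k_A$ and the $\Delta_A$ vertices of $B$ carry lists of size $k_B$, and let $C$ be the pooled color set. The crucial elementary observation is that, because $G$ is complete bipartite, a proper coloring $c$ amounts to the data of a single set $J \subseteq C$ --- namely $J = c(B)$, the colors appearing on $B$ --- subject to two conditions: (i) no list $L(a)$ with $a \in A$ is contained in $J$, and (ii) every list $L(b)$ with $b \in B$ meets $J$. Indeed, if $c$ is proper then $c(A)$ and $c(B)$ are disjoint, so $J = c(B)$ satisfies (i) via $c(a) \in L(a) \setminus J$ and (ii) via $c(b) \in L(b) \cap J$; conversely, given such a $J$, choosing $c(b) \in L(b) \cap J$ and $c(a) \in L(a) \setminus J$ produces a proper coloring, since every pair $(a,b)$ then receives distinct colors.

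Next I would view the $k_A$-sets $\{L(a) : a \in A\}$ as the edge set of a $k_A$-uniform hypergraph $H$ on vertex set $C$, and set $\calf = \{L(b) : b \in B\}$. Condition (i) says precisely that $J$ is an independent set of $H$, while condition (ii) says precisely that $J$ is disjoint from no member of $\calf$. Hence, for a fixed list assignment, $G$ admits a proper coloring if and only if some independent set of $H$ meets every member of $\calf$; equivalently, $G$ admits no proper coloring if and only if every independent set of $H$ is disjoint from some member of $\calf$. Quantifying over list assignments gives both directions: from a list assignment witnessing non-choosability one reads off such an $(H, \calf)$; conversely, given $(H, \calf)$ one sets $C = V(H)$, creates one vertex of $A$ per edge of $H$ with that edge as its list and one vertex of $B$ per member of $\calf$ with that member as its list, producing $K_{\Delta_B, \Delta_A}$ together with a list assignment that, by the equivalence, admits no coloring.

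The only step needing a little care is matching the counts $\Delta_B$ and $\Delta_A$ exactly. The reverse direction is clean, since the $\Delta_B$ edges of $H$ and the $\Delta_A$ members of $\calf$ are distinct by hypothesis. In the forward direction a witnessing list assignment may repeat lists, so the naive $H$ and $\calf$ could have fewer than $\Delta_B$ edges and fewer than $\Delta_A$ members; I would fix this by padding --- first adjoin isolated vertices to $C$ until $\binom{|C|}{k_A} \ge \Delta_B$ and $\binom{|C|}{k_B} \ge \Delta_A$, then add arbitrary new distinct $k_A$-edges to $H$ and new distinct $k_B$-sets to $\calf$ up to the required counts. Adjoining isolated vertices and enlarging $\calf$ visibly preserves the property ``every independent set is disjoint from some member of $\calf$'' (a new isolated vertex lies in no member of $\calf$, and extra members only help), while adding edges to $H$ only shrinks the family of independent sets, so the property survives. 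This bookkeeping is the most delicate part, but it is routine; the real content of the lemma is the reformulation of the completeness constraint as the disjointness condition above.
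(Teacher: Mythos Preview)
Your proposal is correct and follows essentially the same route as the paper: both identify a proper coloring of $K_{\Delta_B,\Delta_A}$ with an independent set of the hypergraph $H$ (edges $=$ $A$-lists) that meets every member of $\calf$ (the $B$-lists). The only difference is bookkeeping for the exact counts: the paper first reduces to ``maximal'' list assignments in which all lists are distinct, so that $|E(H)| = \Delta_B$ and $|\calf| = \Delta_A$ come for free, whereas you establish the equivalence first and then pad $H$ and $\calf$ afterward; these two maneuvers are equivalent and equally routine.
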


In this paper we specifically address the case where $k_A = 2$, where the hypergraph reduces to an ordinary graph. Using probabilistic techniques, we obtain the following bounds:
\begin{thm} \label{thm:twobound}
$\frac{1}{2} \log 3 \leq \xi_m(2) \leq \log 2$.
\end{thm}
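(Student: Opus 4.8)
By \cref{lem:colorgraph} with $k_A=2$ — so that a $2$-uniform hypergraph is an ordinary graph — $\xi_m(2)$ is the infimum of $\frac{m\log t}{s^2}$ over all graphs $H$ with $m$ edges, all positive integers $s=k_B$, and all families $\calf$ of $t$ subsets of $V(H)$, each of size $s$, such that every independent set of $H$ is disjoint from some member of $\calf$; here $m$ and $t$ play the roles of $\Delta_B$ and $\Delta_A$. For the upper bound it then suffices to produce one such configuration with $\frac{m\log t}{s^2}=\log 2$, and I would take $H$ to be a perfect matching on $2s$ vertices (so $m=s$) and let $\calf$ be all $2^s$ transversals of the matching, i.e.\ the $s$-sets containing exactly one endpoint of each edge. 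Any independent set $I$ of $H$ meets each edge in at most one vertex, so the transversal choosing, from each edge, an endpoint outside $I$ is disjoint from $I$; hence $\calf$ is valid, $t=2^s$, and $\frac{m\log t}{s^2}=\log 2$. As $\Delta_A=2^s\ge 2$ and $\Delta_B=s\ge s$, this shows $K_{s,2^s}$ is not $(2,s)$-choosable and $\xi_m(2)\le\log 2$ (the case $s=1$ being $K_{1,2}$).

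For the lower bound I would run a union bound against a random independent set. Fix a valid $(H,\calf,s)$. First reduce to the case that $H$ has no isolated vertices: if $v$ is isolated, then the members of $\calf$ not containing $v$ already form a valid family for $H-v$, which has the same number of edges, so one may iterate. Now choose a random independent set $I$ of $H$ from a distribution tailored to $H$ — the hard-core measure at a suitably chosen fugacity, the uniform measure on independent sets, or a random-greedy independent set are the natural candidates. Validity gives $\sum_{F\in\calf}\Pr[F\cap I=\emptyset]\ge 1$, so some $F$ has $\Pr[F\cap I=\emptyset]\ge 1/t$, and it suffices to prove $\Pr[F\cap I=\emptyset]\le 3^{-s^2/(2m)}$ for every size-$s$ set $F\subseteq V(H)$. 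For this I would telescope $\Pr[F\cap I=\emptyset]$ into a product over $v\in F$ of the conditional probabilities that $v\notin I$ (in the appropriate residual graphs), bound the factor for $v$ by an explicit decreasing function of $\deg_H(v)$, and finish by convexity using $\sum_{v\in F}\deg_H(v)\le 2m$; the extremal $F$ has all degrees equal to $2m/s$, corresponding to $F$ being the large side of a complete bipartite subgraph, where the estimate is tight. The union bound then forces $t\ge 3^{s^2/(2m)}$, i.e.\ $m\log t\ge\tfrac12 s^2\log 3$, which is $\xi\ge\tfrac12\log 3$.

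The delicate point — and the main obstacle — is calibrating the lower bound so that the constant really is $\tfrac12\log 3$ and not something weaker. The union bound is lossy: for the matching construction above it yields only $\xi$ of order $\tfrac12\log 2$, well below the true value $\log 2$. Moreover no single distribution on independent sets is good for all $H$ simultaneously (matchings, cliques, and complete bipartite graphs each call for a different one), so the distribution must genuinely depend on $H$ and the worst-case $F$ has to be identified with care. Even after all this, the gap between $\tfrac12\log 3$ and $\log 2$ is not closed; determining $\xi_m(2)$ exactly appears to need a new idea.
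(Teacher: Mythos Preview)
Your upper bound is correct and is exactly the paper's construction, phrased through \cref{lem:colorgraph}: the matching with all $2^s$ transversals corresponds to the instance $(\Delta_A,\Delta_B)=(2^{k_B},k_B)$ from \cref{cor:simcount}, giving $\xi=\log 2$.

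The lower bound, however, has a genuine gap. Your plan is to telescope $\Pr[F\cap I=\emptyset]$ over the vertices of $F$, bound each conditional factor by a function of $\deg_H(v)$, and optimise via $\sum_{v\in F}\deg_H(v)\le 2m$. Even granting the cleanest per-vertex bound $\Pr[v\notin I]\le d_v/(d_v+1)$ for the random-greedy process, Jensen with $\sum d_v\le 2m$ (and the necessary constraint $m\ge s$) yields only
\[
\prod_{v\in F}\frac{d_v}{d_v+1}\ \le\ \Bigl(1+\tfrac{s}{2m}\Bigr)^{-s},
\]
which translates to $\xi_m(2)\ge\inf_{0<u\le 1/2}\tfrac{1}{2u}\log(1+u)=\log(3/2)\approx 0.405$, strictly weaker than $\tfrac12\log 3\approx 0.549$. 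The conditional factors are not obviously better-behaved, and you give no mechanism for extracting the extra constant. The ``extremal $F$'' you describe (all degrees $2m/s$) is indeed where the convexity bound is tight, but that tight value is not $3^{-s^2/(2m)}$.

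The paper's argument is structurally different in two ways you do not anticipate. First, it does \emph{not} telescope over $F$; instead it passes to the bipartite graph $H_S$ between $S=F$ and $T=N(S)\setminus S$, and bounds the probability that every vertex of $S$ is ``blocked'' by an earlier $T$-neighbour via a recursion on the \emph{first vertex of $T$} processed (\cref{lem:precursion}), obtaining
\[
p(H_S)\ \le\ \Bigl(1+\tfrac{|S|\,\Delta_T}{|E|}\Bigr)^{-|S|/\Delta_T}
\]
(\cref{lem:fancybound}). This bound is useless when $\Delta_T$ is large, so the second key idea is a preprocessing step: iteratively delete the vertex of maximum degree from $H$ until, after $i$ deletions, the remaining graph has at most $\Delta_B(k-i)^2/k^2$ edges and maximum degree at most $2(k-i)\Delta_B/k^2$; a pigeonhole on the degree sequence guarantees such an $i<k$ exists. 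Each $F\in\calf$ still meets the residual graph in at least $k-i$ vertices, and plugging the parameters into \cref{lem:fancybound} gives exactly $(1+2)^{-k^2/(2\Delta_B)}=3^{-k^2/(2\Delta_B)}$. The constant $\tfrac12\log 3$ thus comes from the specific ratio $|S|\Delta_T/|E|\approx 2$ produced by the deletion schedule, not from a convexity argument over degrees in $F$.
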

In particular, combining \cref{lem:colorgraph} and the lower bound of \cref{thm:twobound} yields the following result, which could be of independent interest:
\begin{cor}
Let $H$ be a graph with $e$ edges and let $\calf$ be a set of subsets of size $k$ of the vertices of $H$. If $\abs{\mathcal{F}} < 3^{\frac{k^2}{2e}}$, then there is an independent set of $H$ that intersects every element of $\calf$.
\end{cor}

After addressing bipartite list coloring when $k_A$ is fixed, we proceed to vary $k_A$ and examine the resulting asymptotic behavior of $\xi_m(k_A)$. Defining a quantity $\xi^*_m(k_A)$ which effectively acts as an upper bound on $\xi_m(k_A)$, we prove the following using similar methods of graph amplification:
\begin{thm} \label{thm:limxim}
The limit $\lim_{k_A \to \infty} \log (\xi^*_m(k_A)) / k_A$ exists.
\end{thm}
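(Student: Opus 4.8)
The plan is a subadditivity argument in the spirit of Fekete's lemma. First I would pass through \cref{lem:colorgraph} and think of a witness that $K_{\Delta_B,\Delta_A}$ is not $(k_A,k_B)$-choosable as a \emph{blocking configuration}: a $k_A$-uniform hypergraph $H$ with $\Delta_B$ edges together with a family $\calf$ of $\Delta_A$ many $k_B$-subsets of $V(H)$ such that every independent set of $H$ is disjoint from some member of $\calf$. Accordingly, $\xi'_m(k_A)$ should be read as the infimum of the relevant normalized parameter over the product-closed subfamily of blocking configurations singled out in its definition; in particular $\xi'_m(k_A)\ge\xi_m(k_A)$, which is what it means for $\xi'_m$ to ``act as an upper bound'' on $\xi_m$.

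The heart of the argument is a product operation on blocking configurations. Given a blocking configuration $(H_1,\calf_1)$ of uniformity $k_1$ and one $(H_2,\calf_2)$ of uniformity $k_2$, first arrange them to share a common value of $k_B$ using the $k_B$-rescaling underlying the amplifications of \cref{thm:lopsidedasymptotic} and \cref{thm:eqasymptotic}. Then form the configuration on the disjoint union $V(H_1)\sqcup V(H_2)$ whose edges are the sets $e_1\cup e_2$ with $e_i\in E(H_i)$ and whose family is $\calf_1\cup\calf_2$. This hypergraph is $(k_1+k_2)$-uniform with $|E(H_1)|\cdot|E(H_2)|$ edges, and a set $I=I_1\sqcup I_2$ is independent in it precisely when $I_1$ is independent in $H_1$ or $I_2$ is independent in $H_2$; in either case $I$ is disjoint from a member of the corresponding $\calf_i$, so the product is again a blocking configuration, and it stays in the distinguished subfamily by construction. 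Reading off the normalized parameter of the product from those of the factors then yields submultiplicativity, i.e.\ $\log\xi'_m(k_1+k_2)\le\log\xi'_m(k_1)+\log\xi'_m(k_2)$ (up to an additive error that the definition of $\xi'_m$ is designed to absorb).

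With (approximate) subadditivity of $k_A\mapsto\log\xi'_m(k_A)$ in hand, Fekete's lemma gives that $\log\xi'_m(k_A)/k_A$ converges, to $\inf_{k_A}\log\xi'_m(k_A)/k_A$. Finiteness of the $\limsup$ is immediate since the infimum is at most $\log\xi'_m(2)/2<\infty$; to rule out the value $-\infty$, I would use $\xi'_m(k_A)\ge\xi_m(k_A)\ge C_1(k_A)$ from \cref{thm:gensandwich} together with the observation that the constant $C_1(k_A)$ produced there decays at most exponentially in $k_A$, so that $\log\xi'_m(k_A)/k_A$ is bounded below.

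The main obstacle is controlling the error in the subadditivity inequality. The factor $(\log\Delta_A)^{k_A-1}$ appearing in $\xi$ does not split cleanly under the product — in the union $\calf_1\cup\calf_2$ the number of sets \emph{adds} rather than multiplies, so the naive product loses roughly a factor of $\log\Delta_A$ per combination. The modified quantity $\xi'_m$ must therefore be set up — for instance by normalizing with $(\log\Delta_A)^{k_A}$ in place of $(\log\Delta_A)^{k_A-1}$, and/or restricting the admissible range of $\Delta_A$ — precisely so that this loss becomes a bounded multiplicative constant, or vanishes when one passes to the infimum; this makes $\log\xi'_m$ subadditive up to an additive constant, which is still enough for Fekete. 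Pinning down this definition, and verifying that the product of two admissible configurations is again admissible and realizes the claimed value, is where the real care is needed.
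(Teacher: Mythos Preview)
Your product construction on blocking configurations is correct: if $(H_i,\calf_i)$ are blocking for $i=1,2$ with common $k_B$, then the ``edge-join'' on $V(H_1)\sqcup V(H_2)$ with family $\calf_1\cup\calf_2$ is indeed blocking of uniformity $k_1+k_2$. The gap is in the parameter bookkeeping you defer to the last paragraph. Your product has $\Delta_B'=\Delta_{B,1}\Delta_{B,2}$ but $\Delta_A'=\Delta_{A,1}+\Delta_{A,2}$, so $\Delta_A$ grows only \emph{additively}. With the normalization $(\log\Delta_A)^{k_A}$ (which is in fact how the paper defines $\xi'_m$), iterating your product $n$ times on a fixed witness for $k_0$ gives
\[
\frac{\log\xi'(nk_0)}{nk_0}\;=\;\log\frac{\Delta_B^{1/k_0}\,\log(n\Delta_A)}{k_B}\;\longrightarrow\;+\infty,
\]
so no useful (approximate) subadditivity survives. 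More generally, comparing two witnesses with disparate $\Delta_{A,1},\Delta_{A,2}$ produces an error term $k_1\log\bigl(\log\Delta_A'/\log\Delta_{A,1}\bigr)$ that is unbounded; the expansion you invoke preserves $\xi'$ but leaves $\Delta_A$ untouched, so it cannot equalize the $\Delta_A$'s, and blowup would spoil $\xi'$. The definition of $\xi'_m$ does not absorb this loss.

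The paper avoids this by a different amplification: it applies the blowup $G^{\curlyvee r}$ once on each side (Lemma~\ref{lem:23amplify}), which takes $(k_A,k_B)\mapsto(rk_A,rk_B)$ and, crucially, $(\Delta_A,\Delta_B)\mapsto\bigl((6\Delta_A)^r/6,(6\Delta_B)^r/6\bigr)$, so that $\log\Delta_A$ scales \emph{multiplicatively} with $k_A$. This is exactly what makes the exponent $k_A$ in $(\log\Delta_A)^{k_A}$ harmless. The argument is then not Fekete but a direct ``one witness controls the limsup'': a near-optimal witness at $k_0$ is amplified to $rk_0$ for every $r\in\{2^a3^b\}$, and since such $r$ have $r(s)/s\to 1$, one computes $\limsup_k \log\xi'_m(k)/k\le f(x,k_0)$ with $f(x,k_0)\to x$ as $k_0\to\infty$, forcing $\limsup\le\liminf$. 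Your Fekete strategy could only be salvaged by replacing the hypergraph product with an operation that raises $\Delta_A$ to a power---which is precisely what the paper's two-sided blowup does.
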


We conclude with a discussion of applicability to general bipartite graphs. Inspired by the above results, we make the following conjecture, which we show implies all three parts of \cref{conj:ack}: 
\begin{conj} \label{conj:xig}
There exist positive constants $\xi_g(k_A)$ such that all bipartite graphs $G$ satisfying $\xi < \xi_g(k_A)$ are $(k_A, k_B)$-choosable. Moreover, if the $\xi_g(k_A)$ are chosen to be the largest possible constants such that the above statement is true, then $\lim_{k_A \to \infty} \log (\xi_g(k_A)) / k_A$ exists.
\end{conj}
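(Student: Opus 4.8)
The two assertions call for very different treatments. For the first — a positive threshold $\xi_g(k_A)$ below which every bipartite $G=(A\sqcup B,E)$ is $(k_A,k_B)$-choosable — the plan is to run the semi-random method in the asymmetric setting, in the spirit of Johansson's and Molloy's bounds for triangle-free graphs. Fix a list assignment with $\xi<\xi_g(k_A)$. Iterate the following: activate each vertex of $B$ independently with a small probability $p$, give it a uniformly random color from its current list, keep that assignment if it is conflict-free, and delete the used colors from the affected lists in $A$; then run the symmetric step from the $A$ side. Track, for each $a\in A$, its current list size and, for each color $c$, the number of neighbors of $a$ still able to receive $c$ (its ``$L$-degree''), using the Local Lemma within each round to keep these near their expectations. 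The point of the invariant $\xi=\Delta_B\log(\Delta_A)^{k_A-1}/k_B^{k_A}$ is that exactly this ratio of (powers of) the degree parameters to the list-size parameters is preserved, up to a constant, by one round; so if $\xi$ starts below $\xi_g(k_A)$ the process terminates with all lists nonempty and all degrees zero, yielding a proper coloring.

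The analysis of this process is where essentially all the difficulty resides, and it is the step I expect to be the main obstacle. A single one-shot Local Lemma application falls short by a polynomial factor in $\Delta_A$: after coloring $B$, the probability that all $k_A$ colors of a vertex $a\in A$ are blocked is only about $(\Delta_A/k_B)^{k_A}$, whereas beating the dependency degree would require roughly $(\Delta_A\Delta_B)^{-1}$, and the hypothesis $\xi<\xi_g(k_A)$ is nowhere near strong enough for that. One therefore needs the full iterated, entropy-compression-style control over many rounds. Worse, the bare existence of $\xi_g(k_A)>0$ already implies part (b) of \cref{conj:ack} and hence the Alon--Krivelevich conjecture, so no currently available technique is expected to settle it; a realistic intermediate target is $(k_A,k_B)$-choosability whenever $\xi$ is below a constant times $\log(\Delta_A)^{-\delta}$ for some $\delta>0$, followed by an effort to push $\delta\to 0$.

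For the second assertion, take $\xi_g(k_A)$ to be the largest admissible constant. Since complete bipartite graphs are themselves bipartite, $\xi_g(k_A)\le\xi_m(k_A)\le\xi'_m(k_A)$, and \cref{thm:limxim} already supplies the limit of $\log(\xi'_m(k_A))/k_A$; so it remains only to bound $\liminf_{k_A\to\infty}\log(\xi_g(k_A))/k_A$ from below by that same value, or at least to show that $\log\xi_g$ is approximately superadditive and invoke Fekete's lemma. The mechanism should be an amplification dual to the one behind \cref{thm:limxim}: from colorability statements for $k_A'$ and $k_A''$, a product construction on the color set and on the neighborhoods should yield a colorability statement for $k_A'+k_A''$ whose threshold is at least a fixed function of $(k_A',k_A'')$ times the product of the two thresholds. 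A cleaner route, which I would attempt first, is to prove the general-graph analogue of \cref{lem:colorgraph} — a reformulation of non-$(k_A,k_B)$-choosability of an arbitrary bipartite $G$ as a covering condition on the independent sets of an associated $k_A$-uniform hypergraph carrying additional incidence structure — and then rerun the hypergraph product used for \cref{thm:limxim}, checking that the extra structure multiplies correctly; this would also clarify how far $\xi_g$ can lie below $\xi_m$. Either way, this part is conditional on the first and comparatively routine.
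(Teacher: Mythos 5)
The statement you are asked to prove is \cref{conj:xig}, which the paper itself leaves as an open conjecture: it is never proved, and the only result established about it is that it implies all three parts of \cref{conj:ack}. Your proposal is, accordingly, a research plan rather than a proof, and you say as much yourself: the entire first assertion (the positivity of $\xi_g(k_A)$) rests on an iterated semi-random/Local Lemma analysis that you explicitly identify as the main obstacle and do not carry out. This is not a fixable local gap --- as you note, and as the paper also observes, $\xi_g(k)>0$ for a single $k>1$ already yields $\ch(G)=O(\Delta^{1/k}\log(\Delta)^{1-1/k})$ for all bipartite $G$, i.e.\ major progress on \cref{conj:ak}, so no argument of the sketched form is currently known to close it. Nothing in the paper can be used to check your plan against, but the one-round bookkeeping claim (that a single iteration ``preserves $\xi$ up to a constant'') is itself unsubstantiated and is exactly where the known symmetric arguments for triangle-free graphs fail to give anything better than $O(\Delta/\log\Delta)$.

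For the second assertion there is a concrete obstruction you have not addressed. You propose to transfer the amplification behind \cref{thm:limxim} to general bipartite graphs, but the paper points out precisely why this fails: the blowup does not respect maximum degrees, since $\Delta_A(G^{\curlyvee r})=\Delta_A(G)\abs{B(G)}^{r-1}$, so the operations that multiply $k_A$ and $k_B$ while controlling $\xi$ for complete bipartite graphs do not control $\xi$ (which is defined through $\Delta_A,\Delta_B$) for general $G$. Relatedly, your chain $\xi_g(k_A)\le\xi_m(k_A)$ only gives an upper bound on $\limsup\log(\xi_g(k_A))/k_A$, and the paper notes that the inequality is strict for $k>1$ (via the Alon--Cambie--Kang construction with $\Delta_A=k$ and $\Delta_B\approx\delta^k/k$), so you cannot conclude that the general-graph limit, if it exists, coincides with $\lim\log(\xi'_m(k))/k$. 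Finally, the proposed general-graph analogue of \cref{lem:colorgraph} is not routine: the proof there uses completeness in an essential way (the set of colors used on $B$ must avoid some color of \emph{every} $A$-list, which only follows when every $A$-vertex sees all of $B$), so the independent-set reformulation would have to carry the full incidence structure, and the multiplicativity of the resulting thresholds under products is exactly what would need to be proved. In short: the statement remains a conjecture, and your proposal correctly locates the difficulties without resolving them.
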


\subsection*{Outline}
\Cref{sec:general} contains basic results on the choosability of complete bipartite graphs. In \cref{sec:amplification} we introduce graph amplification techniques and develop properties of $\xi_m(k_A)$, proving \cref{thm:gensandwich}, \cref{thm:lopsidedasymptotic}, and \cref{thm:eqasymptotic}. \Cref{sec:two} focuses on bounding $\xi_m(2)$ and proves \cref{lem:colorgraph} and \cref{thm:twobound}, while in \cref{sec:metaasy} we study the asymptotic behavior of $\xi_m(k_A)$ and $\xi^*_m(k_A)$, proving \cref{thm:limxim}. We conclude with a discussion in \cref{sec:ext} regarding extensions to general bipartite graphs and discuss \cref{conj:xig}.

\section{Basic Results} \label{sec:general}
Fix positive integers $k_A$ and $k_B$. In this section, we let $G$ be the complete bipartite graph $K_{\Delta_B,\Delta_A}$, with our goal being to broadly determine the $(\Delta_A, \Delta_B)$ such that $G$ is $(k_A, k_B)$-choosable. We begin with some basic facts about choosability on complete bipartite graphs:
\begin{prop} \label{prop:trivchoosing}
If $\Delta_A < k_A$ or $\Delta_B < k_B$, then $G=K_{\Delta_B,\Delta_A}$ is $(k_A, k_B)$-choosable.
\end{prop}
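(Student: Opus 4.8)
The plan is to split into the two hypotheses $\Delta_B < k_B$ and $\Delta_A < k_A$ and dispose of each by a single greedy pass. The one point to keep straight is the unpacking of the notation: in $G = K_{\Delta_B,\Delta_A}$, every vertex of $A$ is adjacent to every vertex of $B$, so the degree of an $A$-vertex equals $|B|$ and the degree of a $B$-vertex equals $|A|$; since $\Delta_A$ and $\Delta_B$ are the respective maximum degrees, this forces $|A| = \Delta_B$ and $|B| = \Delta_A$.

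First suppose $\Delta_B < k_B$, and let $L$ be any list assignment with $|L(a)| = k_A$ for $a \in A$ and $|L(b)| = k_B$ for $b \in B$. I would color the vertices of $A$ first, assigning each $a \in A$ an arbitrary color from $L(a)$ (possible since $k_A \ge 1$). Now fix $b \in B$: its neighborhood is exactly $A$, which has $\Delta_B$ vertices, so at most $\Delta_B$ colors are excluded at $b$; as $|L(b)| = k_B > \Delta_B$, some color of $L(b)$ survives, and we assign it to $b$. Since the vertices of $B$ form an independent set, these assignments never conflict with each other, so the result is a proper $L$-coloring. The case $\Delta_A < k_A$ is the mirror image: color each $b \in B$ arbitrarily from $L(b)$, then each $a \in A$ sees at most $|B| = \Delta_A < k_A = |L(a)|$ forbidden colors and can be colored, with no conflicts among the independent set $A$.

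I do not expect a genuine obstacle here; the statement is just the familiar fact that a graph admits a list coloring whenever its vertices can be ordered so that each list strictly exceeds the back-degree, applied to the trivial order ``one part, then the other.'' The only care needed is the bookkeeping of part sizes noted above, to make sure the inequality is being used on the side where it actually holds.
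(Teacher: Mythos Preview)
Your argument is correct and is essentially identical to the paper's own proof: color one side arbitrarily, then observe that each vertex on the other side has fewer neighbors than colors in its list. The only cosmetic difference is that the paper leads with the case $\Delta_A < k_A$ and you lead with $\Delta_B < k_B$, and you spell out the $|A| = \Delta_B$, $|B| = \Delta_A$ bookkeeping that the paper leaves implicit.
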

\begin{proof}
Suppose $\Delta_A < k_A$. Given a list assignment $L$ on $G$, color every vertex $v$ in $B$ with an arbitrary element of $L(v)$. Then, for each vertex $v'$ in $A$, there are at most $\Delta_A$ colors that cannot be used to color $v'$, so since $\abs{L(v')} = k_A$ every vertex in $A$ can be colored.

If $\Delta_B< k_B$, a similar argument holds by swapping the roles of $A$ and $B$.
\end{proof}

Furthermore, observe that the set of $(\Delta_A, \Delta_B)$ such that $G$ is $(k_A,k_B)$-choosable is monotonic, in the sense that if $K_{\Delta_B,\Delta_A}$ is $(k_A,k_B)$-choosable, so is $K_{\Delta_B', \Delta_A'}$ for any $\Delta_B' \leq \Delta_B$ and $\Delta_A' \leq \Delta_A$.

The following proposition will be our main source of uncolorable complete bipartite graphs:
\begin{prop} \label{prop:counterexamples}
Let $r$, $k_A$, and $a_i$ be positive integers, for $1 \leq i \leq r$. Then, if $\Delta_A = k_A^r$ and $\Delta_B = \sum_i a_i^{k_A}$, then $G=K_{\Delta_B,\Delta_A}$ is not $(k_A, \sum_i a_i)$-choosable.
\end{prop}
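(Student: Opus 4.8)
The plan is to exhibit an explicit ``bad'' list assignment on $G=K_{\Delta_B,\Delta_A}$. Recall that here the part $A$ consists of $\Delta_B=\sum_i a_i^{k_A}$ vertices (each of degree $\Delta_A$) receiving lists of size $k_A$, while $B$ consists of $\Delta_A=k_A^{r}$ vertices receiving lists of size $k_B:=\sum_i a_i$. Write $[n]=\{1,\dots,n\}$ and take the palette to be the set of formal triples $(i,t,s)$ with $i\in[r]$, $t\in[k_A]$, $s\in[a_i]$. Identify the vertices of $B$ with tuples $\vec\jmath=(j_1,\dots,j_r)\in[k_A]^{r}$ and assign $L(\vec\jmath)=\{(i,j_i,s):i\in[r],\ s\in[a_i]\}$; since the first coordinate separates the $r$ blocks, $\abs{L(\vec\jmath)}=\sum_i a_i=k_B$. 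Identify the vertices of $A$ with pairs $(i,\vec s)$ where $i\in[r]$ and $\vec s=(s_1,\dots,s_{k_A})\in[a_i]^{k_A}$ — there are exactly $\sum_i a_i^{k_A}=\Delta_B$ of them — and assign $L(i,\vec s)=\{(i,t,s_t):t\in[k_A]\}$, which has size exactly $k_A$ because the middle coordinate runs injectively over $[k_A]$.

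Next I would examine the restriction of an arbitrary proper coloring $c$ to $B$: choosing $c(\vec\jmath)\in L(\vec\jmath)$ is the same as choosing a block index $\iota(\vec\jmath)\in[r]$ and a value $\sigma(\vec\jmath)\in[a_{\iota(\vec\jmath)}]$ with $c(\vec\jmath)=\bigl(\iota(\vec\jmath),\,j_{\iota(\vec\jmath)},\,\sigma(\vec\jmath)\bigr)$. The crux is the combinatorial claim that some block index $i^\star\in[r]$ has the property that for every $t\in[k_A]$ there is a tuple $\vec\jmath$ with $\iota(\vec\jmath)=i^\star$ and $j_{i^\star}=t$. I would prove this by contradiction via a diagonal tuple: if for each $i$ there were a ``missing'' value $t_i\in[k_A]$ for which no $\vec\jmath$ has $\iota(\vec\jmath)=i$ and $j_i=t_i$, then the tuple $\vec\jmath^\star:=(t_1,\dots,t_r)$, whose $\iota(\vec\jmath^\star)$-th coordinate is $t_{\iota(\vec\jmath^\star)}$, would itself witness that $t_{\iota(\vec\jmath^\star)}$ is \emph{not} missing for $i=\iota(\vec\jmath^\star)$ — a contradiction.

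Granting the claim, I fix such an $i^\star$ and, for each $t\in[k_A]$, a witness $\vec\jmath^{(t)}$ with $\iota(\vec\jmath^{(t)})=i^\star$ and $j^{(t)}_{i^\star}=t$; setting $s_t:=\sigma(\vec\jmath^{(t)})$, the color $(i^\star,t,s_t)=c(\vec\jmath^{(t)})$ is used on $B$ for every $t$. Hence the entire list $L(i^\star,\vec s)=\{(i^\star,t,s_t):t\in[k_A]\}$ already appears among the neighbors of the vertex $(i^\star,\vec s)\in A$, so (the graph being complete bipartite) this vertex cannot be properly colored. Thus no proper coloring of $G$ exists, i.e.\ $G$ is not $(k_A,k_B)$-choosable.

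The step I expect to cause the most trouble is arranging the encoding so that the $A$-side has \emph{precisely} $\sum_i a_i^{k_A}$ vertices. The natural first attempt — laying the palette out along a complete depth-$r$, $k_A$-ary tree, with one block of colors at each internal node — overcounts the blocking vertices needed on the $A$-side by a factor of $k_A^{\,i-1}$ at level $i$; the remedy is exactly the ``coordinatewise'' structure of the $B$-lists above, $L(\vec\jmath)=\bigcup_i\{(i,j_i,s):s\in[a_i]\}$, which forgets the position of each block within the tree and collapses the count back to $\sum_i a_i^{k_A}$. One could equivalently package the argument through \cref{lem:colorgraph}, producing a $k_A$-uniform hypergraph with $\Delta_B$ edges on the palette whose independent sets all miss one of $\Delta_A$ prescribed $k_B$-sets, but the combinatorial content is the same.
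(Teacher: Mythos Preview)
Your proof is correct and is essentially the same as the paper's: your colors $(i,t,s)$ are exactly the paper's blocks $C_i^{(t)}$, your $B$-lists and $A$-lists coincide with the paper's, and your diagonal tuple $\vec\jmath^\star=(t_1,\dots,t_r)$ is precisely the paper's vertex with list $\bigcup_i C_i^{(e_i)}$. The only cosmetic difference is that the paper phrases the contradiction as a $B$-vertex with no available color, whereas you finish by exhibiting a blocked $A$-vertex; these are the two ends of the same contrapositive.
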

\begin{proof}
We define a list assignment $L$ that admits no colorings. First define $k_A\sum_i a_i$ colors divided into $k_Ar$ ``blocks'' $C_i^{(j)}$ of size $a_i$, where $1 \leq i \leq r$ and $1 \leq j \leq k_A$. Now assign to the vertices in $A$ the $\sum_i a_i^{k_A}$ sets of $k_A$ colors consisting of a element from $C_i^{(j)}$ for all $j$ as $i$ is fixed. Assign to the vertices in $B$ the $k_A^r$ sets of $\sum_i a_i$ colors $C_1^{(e_1)} \cup C_2^{(e_2)} \cup \cdots \cup C_r^{(e_r)}$, for $(e_1, e_2, \ldots, e_r) \in \set{1, 2, \ldots, k_A}^r$.

Suppose $L$ admits a coloring $c$. Then, note that for all $i$ there cannot be vertices $v_1, v_2, \ldots, v_{k_A} \in B$ such that $c(v_j) \in C_i^{(j)}$ for all $j$, since that would contradict the vertex in $A$ with list equal to $\set{c(v_1), c(v_2), \ldots, c(v_{k_A})}$. Thus there exists a tuple $(e_1, e_2, \ldots, e_r) \in \set{1, 2, \ldots, k_A}^r$ such that, for all $i$, no vertex in $B$ is colored with an element of $C_i^{(e_i)}$. However, this contradicts the vertex with list $\bigcup_i C_i^{(e_i)}$.
\end{proof}

\begin{cor} \label{cor:simcount}
\cref{prop:counterexamples} admits the following special cases:
\begin{itemize}
\item The graph $G=K_{\Delta_B,\Delta_A}$ is not $(k_A, ar)$-choosable where $\Delta_A = k_A^r$ and $\Delta_B = a^{k_A} r$.
\item Moreover, $G$ is not $(k_A,k_B)$-choosable if either $(\Delta_A, \Delta_B) = (k_A, k_B^{k_A})$ or $(\Delta_A, \Delta_B) = (k_A^{k_B}, k_B)$.
\end{itemize}
\end{cor}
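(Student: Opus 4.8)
The plan is to obtain all three assertions as direct specializations of \cref{prop:counterexamples}, choosing the parameters $r$ and $a_1, \dots, a_r$ appropriately and checking that the resulting $\Delta_A$, $\Delta_B$, and list size $\sum_i a_i$ match the claimed values. For the first bullet I would set $a_1 = a_2 = \cdots = a_r = a$. Then \cref{prop:counterexamples} says that $G = K_{\Delta_B, \Delta_A}$ is not $(k_A, \sum_i a_i)$-choosable when $\Delta_A = k_A^r$ and $\Delta_B = \sum_i a_i^{k_A}$; substituting the common value $a$ gives $\sum_i a_i = ar$ and $\sum_i a_i^{k_A} = a^{k_A} r$, which is exactly the statement that $G$ is not $(k_A, ar)$-choosable with $\Delta_A = k_A^r$ and $\Delta_B = a^{k_A} r$.

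Both cases of the second bullet then follow from the first by further specialization. Taking $r = 1$ and $a = k_B$ produces $\Delta_A = k_A^1 = k_A$, $\Delta_B = k_B^{k_A} \cdot 1 = k_B^{k_A}$, and list size $a r = k_B$, yielding non-$(k_A, k_B)$-choosability at $(\Delta_A, \Delta_B) = (k_A, k_B^{k_A})$. Taking instead $a = 1$ and $r = k_B$ produces $\Delta_A = k_A^{k_B}$, $\Delta_B = 1^{k_A} \cdot k_B = k_B$, and list size $ar = k_B$, yielding non-$(k_A, k_B)$-choosability at $(\Delta_A, \Delta_B) = (k_A^{k_B}, k_B)$.

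Since \cref{prop:counterexamples} is taken as given, the only points that need verification are that the substituted parameters are positive integers (immediate, since $a, r, k_A, k_B \geq 1$) and that the arithmetic lines up as above, so there is no real obstacle beyond this bookkeeping. I will note that if one wanted non-choosability to persist for all $\Delta_A, \Delta_B$ up to the stated values rather than exactly at them, one could invoke the monotonicity of choosability in $\Delta_A$ and $\Delta_B$ recorded after \cref{prop:trivchoosing}; but as phrased the corollary asserts only the precise parameter points output by the construction, so this is not needed.
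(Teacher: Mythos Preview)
Your proof is correct and follows exactly the paper's approach: set all $a_i$ equal to $a$ for the first bullet, then specialize further with $r=1$ or $a=1$ for the second. The additional bookkeeping and the remark on monotonicity are fine but not needed.
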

\begin{proof}
For the first part set all the $a_i$ equal to $a$. For the second part, further set $r = 1$ or $a = 1$, respectively.
\end{proof}

From this, it is easy to see that \cref{prop:trivchoosing} cannot be improved. Moreover, one can notice that, for fixed $k_A$, the ``interesting'' values of $\Delta_B$ grow polynomially in $k_B$, whereas $\Delta_A$ can grow exponentially.

By monotonicity, we are now able to show that all sufficiently large complete bipartite graphs are not $(k_A,k_B)$-choosable.

\begin{prop} \label{prop:smoothupper}
Suppose $\Delta_A \geq k_A$, $\Delta_B \geq k_B$, and $\Delta_B \log(\Delta_A)^{k_A - 1} > 2^{2k_A-1} \log(k_A)^{k_A - 1} k_B^{k_A}$. Then $G=K_{\Delta_B,\Delta_A}$ is not $(k_A, k_B)$-choosable.
\end{prop}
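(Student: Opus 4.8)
The plan is to derive every such $G$ from the explicit uncolorable families of \cref{cor:simcount} together with the monotonicity observed just after \cref{prop:trivchoosing}. Recall that \cref{cor:simcount} gives, for all positive integers $a$ and $r$, that $K_{a^{k_A}r,\,k_A^r}$ is not $(k_A, ar)$-choosable, and that enlarging either part of the graph, or shrinking the lists on $B$, only makes things worse. Hence it suffices to produce positive integers $r$ and $a$ with
\[
k_A^r \le \Delta_A, \qquad a^{k_A} r \le \Delta_B, \qquad ar \ge k_B.
\]
Indeed, the first two inequalities say that $G = K_{\Delta_B,\Delta_A}$ has both parts at least as large as the corresponding parts of $K_{a^{k_A}r,\,k_A^r}$, so by monotonicity $G$ is not $(k_A, ar)$-choosable; the third inequality then lets us pass from list size $ar$ on $B$ down to $k_B$, so $G$ is not $(k_A, k_B)$-choosable.

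The shapes of the constraints dictate the choice: the first forces $r \le \log_{k_A}\Delta_A$ (which is $\ge 1$ since $\Delta_A \ge k_A$), while taking $a \approx k_B/r$ turns the remaining two inequalities into the single requirement that $k_B^{k_A}/r^{k_A-1}$ be at most a constant times $\Delta_B$, i.e. roughly $r \ge (k_B^{k_A}/\Delta_B)^{1/(k_A-1)}$; these two windows on $r$ overlap exactly when $\Delta_B\log(\Delta_A)^{k_A-1}$ is at least a constant times $k_B^{k_A}\log(k_A)^{k_A-1}$, which is the hypothesis. Concretely I would take $r := \min\bigl(\lfloor \log \Delta_A/\log k_A \rfloor,\ k_B\bigr)$ and $a := \lceil k_B/r\rceil$, both positive integers. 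Then $k_A^r \le \Delta_A$ and $ar \ge k_B$ are immediate, so only $a^{k_A} r \le \Delta_B$ must be checked. If $r = k_B$, then $a = 1$ and $a^{k_A}r = k_B \le \Delta_B$; otherwise $r = \lfloor \log_{k_A}\Delta_A\rfloor < k_B$, and using $\lfloor x\rfloor \ge x/2$ on $[1,\infty)$ gives $r \ge \tfrac12\log_{k_A}\Delta_A$, while $\lceil y\rceil \le 2y$ on $[1,\infty)$ gives $a \le 2k_B/r$, so that $a^{k_A}r \le 2^{k_A}k_B^{k_A}/r^{k_A-1} \le 2^{k_A}k_B^{k_A}\bigl(2\log k_A/\log\Delta_A\bigr)^{k_A-1}$, and the hypothesis $\Delta_B \log(\Delta_A)^{k_A-1} > 2^{2k_A-1}\log(k_A)^{k_A-1}k_B^{k_A}$ bounds this by $\Delta_B$.

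I do not expect a genuine obstacle; the only care needed is bookkeeping. One must make sure the losses from the floor and the ceiling assemble into exactly the stated constant, which factors as $2^{2k_A-1} = 2^{k_A-1}\cdot 2^{k_A}$, one factor from the floor on $r$ and one from the ceiling on $a$. One must also dispatch the degenerate cases: capping $r$ at $k_B$ is what absorbs the regime $\Delta_A \ge k_A^{k_B}$, where a value of $r$ smaller than $\lfloor\log_{k_A}\Delta_A\rfloor$ is wanted; the subcase $r = 1$ handles $\Delta_A \in [k_A, k_A^2)$ with no extra work; and when $k_A = 1$ the exponent $1/(k_A-1)$ is vacuous but the claim is trivial, since \cref{cor:simcount} already gives that $K_{k_B,1}$ is not $(1,k_B)$-choosable and monotonicity finishes it whenever $\Delta_B \ge k_B$.
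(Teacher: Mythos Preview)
Your argument is correct and essentially identical to the paper's: both choose $r=\lfloor\log_{k_A}\Delta_A\rfloor$, $a=\lceil k_B/r\rceil$, invoke \cref{cor:simcount}, and bound $a^{k_A}r$ via $r\ge\tfrac12\log_{k_A}\Delta_A$ and $a\le 2k_B/r$. The only cosmetic differences are that you fold the large-$\Delta_A$ case into the definition of $r$ by capping it at $k_B$ (the paper instead dispatches $r>k_B$ separately via the second bullet of \cref{cor:simcount}) and that you explicitly note the $k_A=1$ degeneracy.
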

\begin{proof}
Let $r = \floor{\log_{k_A} \Delta_A}$ and $a = \ceil{k_B / r}$. If $r > k_B$ we are immediately done by the second part of \cref{cor:simcount} since $\Delta_A \geq k_A^{k_B}$ and $\Delta_B \geq k_B$. Thus we may assume otherwise, which implies that $r$ and $a$ are both positive integers.

By the first part of \cref{cor:simcount}, we now need to show that $k_A^r \leq \Delta_A$ and that $a^{k_A} r \leq \Delta_B$. The first is obvious, while the second follows from the two estimates $r > \log_{k_A}(\Delta_A)/2$ and $a < 2k_B/r$, which imply that
\[a^{k_A} r < \frac{2^{k_A} k_B^{k_A}}{r^{k_A - 1}} < \frac{2^{2k_A-1} k_B^{k_A} \log(k_A)^{k_A-1}}{\log(\Delta_A)^{k_A - 1}} < \Delta_B,\]
as desired.
\end{proof}

The constants involved here are probably far from tight. For example, when $k_A = 2$, more careful analysis yields the following bound, which improves the constant by a factor of about $4$.

\begin{prop} \label{prop:2smoothupper}
Suppose $\Delta_B \geq k$ and $\Delta_B \log \Delta_A > 1.4k^2$. Then $G=K_{\Delta_B,\Delta_A}$ is not $(2, k)$-choosable.
\end{prop}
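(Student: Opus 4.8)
The plan is to refine the proof of \cref{prop:smoothupper} in the case $k_A = 2$. That argument incurs two avoidable losses — it replaces the optimal $r = \floor{\log_{k_A}\Delta_A}$ by the weaker $r > \tfrac12\log_{k_A}\Delta_A$, and it replaces the equal‑partition part size $a = \ceil{k_B/r}$ by $a < 2k_B/r$ — and eliminating both is exactly what yields the factor‑$4$ improvement. I will additionally use \cref{prop:counterexamples} with a \emph{balanced} (not necessarily equal) partition of $k$, which shaves off a further lower‑order term.

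First I would dispose of degeneracies. If $\Delta_A = 1$ the hypothesis $\Delta_B\log\Delta_A > 1.4k^2$ cannot hold, so assume $\Delta_A \geq 2$ and set $r := \min(\floor{\log_2\Delta_A},\,k)$, noting $r \geq 1$. If $r = k$, i.e.\ $\Delta_A \geq 2^k$, then since also $\Delta_B \geq k$, monotonicity together with the case $(\Delta_A,\Delta_B) = (2^k, k)$ of \cref{cor:simcount} shows that $G$ is not $(2,k)$-choosable. So assume $1 \leq r \leq k-1$; then $r = \floor{\log_2\Delta_A}$ and hence $2^r \leq \Delta_A < 2^{r+1}$.

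Now let $a_1,\dots,a_r$ be $k$ partitioned into $r$ parts as evenly as possible, and set $s = k \bmod r$. A direct computation gives
\[
\sum_i a_i^2 \;=\; \frac{k^2}{r} + \frac{s(r-s)}{r} \;\leq\; \frac{k^2}{r} + \frac r4,
\]
with $\sum_i a_i^2 = k^2$ exactly when $r = 1$. By \cref{prop:counterexamples} (with $k_A=2$), the graph $K_{\sum_i a_i^2,\,2^r}$ is not $(2,k)$-choosable, so by monotonicity it suffices to verify $\Delta_A \geq 2^r$, which is immediate, and $\Delta_B \geq \sum_i a_i^2$. From $\Delta_A < 2^{r+1}$ we get $\log\Delta_A < (r+1)\log 2$, so the hypothesis gives
\[
\Delta_B \;>\; \frac{1.4\,k^2}{\log\Delta_A} \;>\; \frac{1.4}{\log 2}\cdot\frac{k^2}{r+1} \;>\; \frac{2k^2}{r+1},
\]
using $\tfrac{1.4}{\log 2} > 2$. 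When $r = 1$ the right‑hand side equals $k^2 = \sum_i a_i^2$, so we are done; when $r \geq 2$ it remains to check $\tfrac{2k^2}{r+1} \geq \tfrac{k^2}{r} + \tfrac r4$, which after clearing denominators reads $(r-1)k^2 \geq \tfrac14 r^2(r+1)$ and follows from $k \geq r+1$ together with $3r^2 \geq 4$.

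The only genuinely delicate point is the numerical inequality $\tfrac{1.4}{\log 2} > 2$ (equivalently $1.4 > 2\log 2$): it is what pins the constant at $1.4$, and it is tight precisely at $r = 1$, i.e.\ for $\Delta_A \in \{2,3\}$, where there is essentially no slack — which is why those cases must be handled through the exact identity $\sum_i a_i^2 = k^2$ rather than the generic estimate. Everything else is elementary.
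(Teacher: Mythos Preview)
Your proof is correct and follows essentially the same approach as the paper's: the same choice $r = \lfloor\log_2\Delta_A\rfloor$, the same balanced partition with the estimate $\sum_i a_i^2 \leq k^2/r + r/4$, and the same appeal to \cref{prop:counterexamples} together with monotonicity. The only difference is in the closing numerical step: the paper notes that $\Delta_A \leq 3^r$ always holds (so $r \geq \log\Delta_A/\log 3$), which combined with $\sum_i a_i^2 \leq \tfrac54\,k^2/r$ reduces everything to the single check $1.4/\log 3 > 5/4$ and avoids your $r=1$ versus $r\geq 2$ case split.
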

\begin{proof}
Again let $r = \floor{\log_2 \Delta_A}$, and note that if $r > k$ we are done. Otherwise, we have $r \geq \frac{1}{\log_2 3}\log_2 \Delta_A = \frac{1}{\log 3}\log \Delta_A$.

Since $r \leq k$, we can let $a_i$ (for $1 \leq i \leq r$) be positive integers that sum to $k$ with the least possible variation. Letting $k = rq + r'$ with $0 \leq r' < r$, we obtain that
\[\sum_i a_i^2  = rq^2 + 2r'q + r' = \frac{k^2 + r'(r-r')}{r} \leq \frac{k^2}{r} + \frac{r}{4} \leq \frac{5}{4} \frac{k^2}{r}.\]

It is obvious that $2^r \leq \Delta_A$, so it suffices to show that $\sum_i a_i^2 \leq \Delta_B$. Indeed, we have
\[\Delta_B > \frac{1.4k^2}{\log \Delta_A} \geq \frac{1.4}{\log 3} \frac{k^2}{r} > \frac{5}{4} \frac{k^2}{r},\]
as desired.
\end{proof}

Having proved that $G$ is not $(k_A,k_B)$-choosable for large values of $(\Delta_A,\Delta_B)$, we conclude this section by showing that $G$ is $(k_A,k_B)$-choosable for small values of $(\Delta_A,\Delta_B)$.

\begin{lem} \label{lem:chernoff}
Fix some $k_A > 1$. Let $f(u) = 1 - u + u\log u$ and let the global maximum of the quantity $uf(u)^{k_A - 1}$ over the interval $u \in [0,1]$ be $\alpha(k_A)$, achieved at $u = u_0$. For every $\eps > 0$, there exists a $\Delta_0$, depending on $\eps$ and $k_A$, such that whenever $\Delta_A > \Delta_0$ and $\Delta_B \log(\Delta_A)^{k_A - 1} < (1-\eps) \alpha(k_A) k_B^{k_A}$, $G=K_{\Delta_B,\Delta_A}$ is $(k_A,k_B)$-choosable.
\end{lem}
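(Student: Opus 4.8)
The plan is to reduce to the hypergraph picture of \cref{lem:colorgraph} and run a first‑moment argument. By \cref{lem:colorgraph} it suffices to show that any $k_A$‑uniform hypergraph $H$ on a color set $C$ with $\Delta_B$ edges, together with any family $\calf$ of $\Delta_A$ $k_B$‑subsets of $C$, admits an independent set of $H$ meeting every $F\in\calf$. Equivalently, we want a coloring of $A$ whose set $S$ of used colors is a transversal of the $A$‑lists with $C\setminus S$ meeting every $B$‑list, after which $B$ is colored greedily. First I would set aside the case in which the $A$‑lists have a transversal of size $<k_B$ (then $S$ may be chosen to contain no $F$ and we are done), so that afterwards the $A$‑lists may be assumed sufficiently spread out; since $\sum_{c\in C}\deg_H(c)=k_A\Delta_B$, this also caps how many colors can have large multiplicity, which is what the Chernoff step will use.

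The heart of the proof is choosing $S$ at random. Impose a uniformly random linear order $\prec$ on $C$ and color each vertex of $A$ with the $\prec$‑minimum of its list; let $S$ be the resulting set of used colors. Fix $F\in\calf$ and bound $\Pr[F\subseteq S]$. A color $c$ lies in $S$ only if some $A$‑list through $c$ has $c$ as its $\prec$‑minimum, i.e.\ has its other $k_A-1$ vertices all $\prec$‑above $c$; so $F\subseteq S$ forces this for every $c\in F$, and it is hardest to meet for the colors of $F$ of high $\prec$‑rank. Introducing a free parameter $u\in(0,1)$ (a quantile threshold) and conditioning on the order restricted to $F$, a Chernoff estimate for the number of colors of $F$ that fail to enter $S$ — a near‑Poissonian count whose mean, after optimizing $u$ and invoking the degree budget, is governed by $u\,f(u)^{k_A-1}$ — yields, schematically, $\Pr[F\subseteq S]\le\exp\!\big(-(1-o(1))\,u\,f(u)^{k_A-1}\,k_B^{k_A}/(\Delta_B\,\log(\Delta_A)^{k_A-2})\big)$, where $f(u)=1-u+u\log u$ enters as the Poisson lower‑tail rate function — equivalently $f(u)=\Pr[\mathrm{Poisson}(\log(1/u))\ge2]$ — and the exponent $k_A-1$ counts the non‑minimal vertices of an $A$‑list. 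Taking a union bound over the $\Delta_A$ choices of $F$ and optimizing $u$, the expected number of $F$ with $F\subseteq S$ is below $1$ as soon as $u\,f(u)^{k_A-1}$ can be made to exceed $\Delta_B\log(\Delta_A)^{k_A-1}/k_B^{k_A}=\xi$, i.e.\ as soon as $\xi<\alpha(k_A)=\max_{u\in[0,1]}u f(u)^{k_A-1}$; the factor $(1-\eps)$ and the hypothesis $\Delta_A>\Delta_0$ absorb the $o(1)$ and lower‑order terms (here it helps that $\Delta_B\ge k_B$, which we may assume by \cref{prop:trivchoosing}, forces $\log(\Delta_A)/k_B\le\xi^{1/(k_A-1)}$ and so keeps all errors uniformly small).

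I expect the main difficulty to be the dependence structure underlying the Chernoff step: it is clean when the $A$‑lists meeting $F$ overlap little, so the relevant "does $c$ enter $S$" events are essentially independent, whereas $H$ may be highly overlapping, and the naive per‑list bound in fact degenerates in exactly the borderline configurations (a single repeated $B$‑list, or all edges of $H$ packed into $F$). I would resolve this by combining the set‑aside small‑transversal case with a convexity estimate over the multiplicities $\deg_H(c)$, $c\in F$ — the budget $\sum_c\deg_H(c)=k_A\Delta_B$ forces enough low‑multiplicity colors inside each $F$ — and, if necessary, a Harris/FKG‑type correlation inequality or a union bound over tuples of witnessing $A$‑lists to legitimize the independence heuristic. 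The delicate point is checking that this bookkeeping comes out to exactly the extremal constant $\alpha(k_A)$ rather than a lossier one.
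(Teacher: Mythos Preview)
Your approach is genuinely different from the paper's and, as written, has a real gap. The paper does \emph{not} use the hypergraph picture or a random ordering of the colors. Instead it reserves each color for $B$ independently with probability $p=\frac{(1+\eps/k_A)\log\Delta_A}{f(u_0)\,k_B}$, bounds by Markov the number of $A$-vertices all of whose colors are reserved (expected value $\Delta_B p^{k_A}$), and then \emph{deterministically fixes} those few bad $A$-vertices by unreserving one color each. The Chernoff bound is applied to a genuine sum of i.i.d.\ Bernoullis---the number of reserved colors in a single $B$-list---and $f(u)$ is exactly the binomial lower-tail rate function $\Pr[\mathrm{Bin}(k_B,p)<u k_B p]\le e^{-k_B p\,f(u)}$. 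The exponent $k_A-1$ on $f(u_0)$ then falls out of the algebra when one balances the Markov step (scaling as $p^{k_A-1}$ after dividing by the threshold $u_0 k_B p$) against the Chernoff step. No dependence issues arise because the only concentration step is on an honest binomial.

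In your scheme, by contrast, the asserted bound $\Pr[F\subseteq S]\le\exp\bigl(-(1-o(1))\,u f(u)^{k_A-1}k_B^{k_A}/(\Delta_B\log(\Delta_A)^{k_A-2})\bigr)$ is the whole lemma, and you have not derived it. In the random-order model, conditionally on $X_c=x$ the probability that $c\in S$ is (in the disjoint case) $1-(1-(1-x)^{k_A-1})^{d_c}$; integrating this and summing over $c\in F$ produces expressions in which neither the Chernoff rate $f(u)=1-u+u\log u$ nor the specific power $f(u)^{k_A-1}$ appears in any evident way. Your two glosses on $f$---``Poisson lower-tail rate'' and ``$\Pr[\mathrm{Poisson}(-\log u)\ge2]$''---are both true identities but neither connects to the random-ordering analysis you set up; and ``the exponent $k_A-1$ counts the non-minimal vertices of an $A$-list'' explains the $(1-x)^{k_A-1}$ factor, not the $f(u)^{k_A-1}$ you need. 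You correctly flag the dependence between the events $\{c\in S\}$ as the hard part, but the proposal does not resolve it, and it is not clear that FKG/Harris or the degree-budget convexity you mention would recover exactly $\alpha(k_A)$ rather than a weaker constant. The paper sidesteps all of this by choosing a random object (the reserved set) whose relevant statistics are sums of independent indicators, at the cost of a separate Markov/fixing step for the $A$-side.
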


The proof follows a probabilistic procedure found in \cite{Alon2020}, which we will repeat here for clarity. We will need the following Chernoff bound:
\begin{fact*}[see e.g.\ \cite{Hagerup1990}]
Let $X_1, \ldots, X_n$ be i.i.d.\ binary random variables that are $1$ with probability $p$. Then for $0 \leq \delta \leq 1$ we have
\[\setp\paren*{\sum_i X_i < (1-\delta) np} < e^{-npf(1-\delta)}.\]
\end{fact*}

\begin{proof}[Proof of \cref{lem:chernoff}]
Observe that the statement of the lemma gets strictly stronger when $\eps$ is decreased, so it suffices to prove the lemma for $\eps$ sufficiently small.

If $\Delta_B < k_B$ we are done by \cref{prop:trivchoosing}, so assume otherwise. Then we know that
\[\log \Delta_A < \paren*{\frac{(1-\eps)\alpha(k_A) k_B^{k_A}}{\Delta_B}}^{1/(k_A-1)} < \alpha(k_A)^{1/(k_A-1)} k_B.\] Set $p = \frac{1}{f(u_0) k_B}(1+\eps/k_A) \log \Delta_A$. Since
\[p < \frac{\alpha(k_A)^{1/(k_A-1)} k_B}{f(u_0) k_B} (1+\eps/k_A) = u_0^{1/(k_A - 1)}(1+\eps/k_A),\]
we find that $p < 1$ for sufficiently small $\eps$.

Given a list assignment $L$, we will color the vertices with the following procedure:
\begin{itemize}
\item Let $C'$ be a random subset of the set of colors $C$, chosen by independently placing every color $c \in C$ in $C'$ with probability $p$.
\item If there are at least $\frac{u_0}{f(u_0)} (1+\eps/k_A) \log \Delta_A$ vertices $v \in A$ with $L(v) \subseteq C'$, abort.
\item For each vertex $v$ in $A$ with $L(v) \subseteq C'$, choose an arbitrary color in $L(v)$ and remove it from $C'$. (Whether this is done simultaneously across all such vertices $v$ or sequentially is irrelevant.)
\item Color the vertices in $A$ with colors not in $C'$. Color the vertices in $B$ with colors in $C'$. If this is impossible, abort.
\end{itemize}
After the first step, the expected number of vertices in $A$ with no available colors is $p^{k_A} \Delta_B$. Thus the probability that this procedure aborts in the second step is, by Markov's inequality, at most
\begin{align*}
\frac{p^{k_A} \Delta_B}{\frac{u_0}{f(u_0)} (1+\eps/k_A) \log \Delta_A} &=  (1+\eps/k_A)^{k_A-1} \frac{ (\Delta_B \log (\Delta_A)^{k_A-1})/k_B^{k_A}}{u_0 f(u_0)^{k_A-1}} \\ &< (1+\eps/k_A)^{k_A - 1} (1-\eps).
\end{align*}
This quantity depends only on $\eps$ and is strictly less than $1$ for small $\eps$.

If the procedure does not abort in the second step, every vertex in $A$ has an available color in the last step. The probability that a fixed vertex $v$ in $B$ does not have an available color is bounded above by the probability that $\abs{L(v) \cap C'} < \frac{u_0}{f(u_0)} (1+\eps/k_A) \log \Delta_A = u_0k_Bp$ after the first step. This probability can be bounded with the aforementioned Chernoff bound, where we set $n = k_B$, $X_i$ to be $1$ if and only if a color is reserved for $B$ in the first step, and $1-\delta = u_0$. Combining this with a union bound over all $\Delta_A$ vertices in $B$, we find that the probability of some vertex in $B$ not having an available color after the procedure is at most
\[\Delta_A e^{-k_Bp f(u_0)} = \Delta_Ae^{-(1+\eps/k_A) \log \Delta_A} = \Delta_A^{-\eps/k_A}.\]
As $\Delta_A$ grows, this quantity can become arbitrarily small, so the total probability of failure, bounded above by $(1+\eps/k_A)^{k_A - 1} (1-\eps)+\Delta_A^{-\eps/k_A}$, must become less than $1$ for sufficiently large $\Delta_A$, as desired.
\end{proof}

To better contextualize \cref{prop:smoothupper} and \cref{lem:chernoff}, we introduce the following quantity:
\begin{defn}
Given $\Delta_A$, $\Delta_B$, $k_A$, $k_B$, define $\xi(\Delta_A, \Delta_B, k_A, k_B) = \Delta_B \log (\Delta_A)^{k_A - 1}/k_B^{k_A}$.
\end{defn}
In this notation, the condition in \cref{prop:smoothupper} simplifies to $\Delta_A \geq k_A$, $\Delta_B\geq k_B$, and $\xi > 2^{2k_A - 1} \log(k_A)^{k_A - 1}$, while the condition in \cref{lem:chernoff} becomes $\xi < (1-\eps)\alpha(k_A)$ for sufficiently large $\Delta_A$.

\section{Graph Amplification} \label{sec:amplification}
In this section we describe two ways to enlarge graphs which are distinct from the standard graph products. Through a variant of the ``tensor power trick'' we sharpen some of the bounds of the previous section and show the existence of an asymptotic across multiple regimes.

\subsection{Graph amplification procedures}
While in this paper we will only use graph amplification for complete bipartite graphs, we present definitions that are applicable to all bipartite graphs. Thus, for this subsection let $G$ be an arbitrary bipartite graph with bipartition $A \sqcup B$.
\begin{defn}
Let $r$ be a positive integer and let $G$ be a bipartite graph with bipartition $A \sqcup B$. The \emph{$r$-fold blowup} of $G$, denoted $G^{\curlyvee r}$, is the bipartite graph $G'$ with vertex parts $A' = A \times [r]$ and $B' = B^r$. Draw an edge between some vertex $(v, i) \in A'$ and $(v_1, \ldots, v_r) \in B'$ if and only if $vv_i$ is an edge in $G$.
\end{defn}
\begin{rmk}
This notion of blowup differs from notions of graph blowups defined elsewhere in the literature.
\end{rmk}
\begin{defn}
Let $r$ be a positive integer and let $G$ be a bipartite graph with bipartition $A \sqcup B$. The \emph{$r$-fold expansion} of $G$, denoted $G^{\curlywedge r}$, is the bipartite graph created by replacing each vertex in $A$ with $r$ copies of itself, connected to the same vertices in $B$.
\end{defn}
\begin{examp}
If $G$ is the path graph with $4$ vertices, its $2$-fold blowup and expansion are shown in \cref{fig:amp}.
\end{examp}
\begin{figure}[tbp]
\centering
\includegraphics{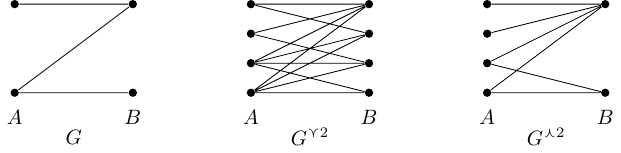}
\caption{$G$, $G^{\curlyvee 2}$, and $G^{\curlywedge 2}$ when $G$ is a path graph with $4$ vertices.} \label{fig:amp}
\end{figure}
These constructions are useful for the following reason:
\begin{lem} \label{lem:amplification}
If $G$ is not $(k_A, k_B)$-choosable, then neither $G^{\curlyvee r}$ nor $G^{\curlywedge r^{k_A}}$ are $(k_A, rk_B)$-choosable.
\end{lem}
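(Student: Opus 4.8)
The plan is to treat the two amplifications separately, but through a common device: take the color set $C$ over which the witnessing bad list assignment $L$ on $G$ is defined and split it into $r$ disjoint ``layers'' $C_1, \dots, C_r$ via bijections $t_i \colon C \to C_i$. The layers let a coloring of the amplified graph record, for each color it uses, which of $r$ parallel copies of $G$ that color ``came from.''

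For the blowup I would assign to $(v,i) \in A'$ the list $t_i(L(v))$ (size $k_A$) and to $(v_1, \dots, v_r) \in B'$ the list $\bigsqcup_i t_i(L(v_i))$ (size $rk_B$, by disjointness of the $C_i$), and then suppose toward a contradiction that this admits a proper coloring $c'$. The color of $(v,i)$ lies in $C_i$, so write it $t_i(\alpha_{v,i})$ with $\alpha_{v,i} \in L(v)$. For each fixed $i$, the rule $v \mapsto \alpha_{v,i}$ is a valid partial $L$-coloring of $A$ in $G$; since $L$ is bad for $G$ and $B$ is independent, it cannot be completed, so some $w_i \in B$ satisfies $L(w_i) \subseteq \set{\alpha_{v,i} : v \in N_G(w_i)}$. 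Finally consider the color of $b^* = (w_1, \dots, w_r) \in B'$: it lies in some $C_\iota$ and equals $t_\iota(\beta)$ with $\beta \in L(w_\iota)$, so by the choice of $w_\iota$ there is $v^* \in N_G(w_\iota)$ with $\alpha_{v^*,\iota} = \beta$; then $(v^*,\iota)$ and $b^*$ are adjacent in $G^{\curlyvee r}$ yet receive the same color, a contradiction.

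For the expansion I would fix an ordering $L(v) = \set{\gamma^v_1, \dots, \gamma^v_{k_A}}$ for each $v \in A$, identify the $r^{k_A}$ copies of $v$ with functions $\mathbf{f} \colon [k_A] \to [r]$, give the copy $v_{\mathbf{f}}$ the list $\set{t_{\mathbf{f}(l)}(\gamma^v_l) : l \in [k_A]}$ (size $k_A$), and give each $w \in B$ the list $\bigsqcup_i t_i(L(w))$ (size $rk_B$). Given a proper coloring $c''$, write $c''(w) = t_{j(w)}(\beta_w)$ with $\beta_w \in L(w)$, and for $v \in A$, $l \in [k_A]$ set $J_{v,l} = \set{j(w) : w \in N_G(v),\ \beta_w = \gamma^v_l} \subseteq [r]$. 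The crux is that for every $v$ some $J_{v,l}$ is empty: otherwise, picking $\mathbf{f}(l) \in J_{v,l}$ for each $l$, the copy $v_{\mathbf{f}}$ gets some color $t_{\mathbf{f}(l_0)}(\gamma^v_{l_0})$, but a witness $w$ to $\mathbf{f}(l_0) \in J_{v,l_0}$ is a neighbor of $v_{\mathbf{f}}$ with $c''(w) = t_{\mathbf{f}(l_0)}(\gamma^v_{l_0})$, contradicting properness. Choosing $l(v)$ with $J_{v,l(v)} = \emptyset$ for each $v$, the coloring $c(v) = \gamma^v_{l(v)}$ on $A$ and $c(w) = \beta_w$ on $B$ is a proper $L$-coloring of $G$ (emptiness of $J_{v,l(v)}$ says precisely that no neighbor of $v$ uses $\gamma^v_{l(v)}$), contradicting that $L$ is bad.

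I expect the expansion step to be the main obstacle, specifically the temptation to imitate the blowup argument: one would like, for each layer $i$, a vertex of $B$ whose list is exhausted by the layer-$i$ colors of its neighbors among the constant-$i$ copies, but the color $c''$ actually assigns to such a vertex need not lie in layer $i$, so no contradiction follows. The resolution is to reason one $A$-vertex at a time, using that the $r^{k_A}$ copies of $v$ realize every pattern $\mathbf{f}$ of layer-assignments to the $k_A$ elements of $L(v)$, to force a color of $L(v)$ that $\beta$ avoids on all of $N_G(v)$. Verifying that every constructed list has exactly the stated size is routine given that the $C_i$ are disjoint and each $t_i$ is injective.
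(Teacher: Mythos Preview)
Your proof is correct and follows essentially the same approach as the paper: the same layered color set $C \times [r]$ (your $t_i$'s), the same list assignments on both amplifications, and the same contradictions. The only cosmetic difference is that in the expansion case you argue the contrapositive---first showing every $v\in A$ has some $J_{v,l}=\emptyset$ and then building a proper $L$-coloring of $G$---whereas the paper directly observes that the map $w\mapsto\beta_w$ on $B$ cannot extend to an $L$-coloring of $G$, yielding a single $v'\in A$ with all $J_{v',l}$ nonempty; these are the same argument read in opposite directions.
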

\begin{proof}
Say $G$ has a list assignment $L$ on a set of colors $C$ that admits no colorings. We will define a list assignment $L'$ on $G^{\curlyvee r}$ using the colors $C \times [r]$, by setting $L'((v, i)) = \setmid{(c, i)}{c \in L(v)}$ for all $(v, i) \in A'$ and $L'((v_1, \ldots, v_r)) = \setmid{(c, i)}{c \in L(v_i), i \in [r]}$ for all $(v_1, \ldots, v_r) \in B'$.

Suppose $L'$ admits a coloring $c'$. For a fixed $i$, consider coloring every vertex $v \in  A$ with the color $c$ such that $c'((v, i)) = (c, i)$. Since this cannot be extended to a coloring of $L$, there must exist some $v_i \in B$ such that for all $c \in L(v_i)$ there exists an adjacent vertex $v'$ such that $(c, i) = c'((v', i))$. This creates a contradiction with the vertex $(v_1, \ldots, v_r) \in B'$. Thus $G^{\curlyvee r}$ is not $(k_A, rk_B)$-choosable.

Now, we deal with $G^{\curlywedge r^{k_A}}$. Call the parts $A' = A \times [r]^{k_A}$ and $B' = B$. We again define an unchoosable list assignment $L'$ with the colors $C \times [r]$.

For each $v \in A$ arbitrarily order the elements of $L(v)$ and call them $\ell_i(v)$, for $1 \leq i \leq k_A$. Then, for $j_1,j_2,\ldots,j_{k_A} \in [r]$, assign
\[L'((v, j_1, \ldots, j_{k_A})) = \set{(\ell_1(v), j_1), (\ell_2(v), j_2), \ldots, (\ell_{k_A}(v), j_{k_A})}.\]
For $v \in B'$, just set $L'(v) = L(v) \times [r]$.

Suppose $L'$ admits a coloring $c'$. Consider coloring each vertex $v \in B$ with the color $c$ such that $c'(v)=(c, j)$ for some $j$. This cannot be extended to a coloring of $L$, so there must exist some $v'\in A$ with neighbors $v_i \in B$, for all $1 \leq i \leq k_A$, satisfying $c'(v_i) = (\ell_i(v'), j_i)$ for some $j_i \in [r]$. Then there are no available colors for $(v', j_1, \ldots, j_{k_A}) \in A'$, meaning that $G^{\curlywedge r^{k_A}}$ is not $(k_A, rk_B)$-colorable.
\end{proof}

\subsection{Asymptotic existence for complete bipartite graphs}
For this subsection we return to the case when $G = K_{\Delta_B, \Delta_A}$ is complete bipartite. In this case \cref{lem:amplification} rewrites as follows:
\begin{cor} \label{cor:completeamp}
If $G = K_{\Delta_B, \Delta_A}$ is not $(k_A,k_B)$-choosable, then neither $G^{\curlyvee r} = K_{r\Delta_B, \Delta_A^r}$ nor $G^{\curlywedge r^{k_A}} = K_{r^{k_A} \Delta_B, \Delta_A}$ is $(k_A, rk_B)$-choosable.
\end{cor}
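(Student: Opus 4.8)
The plan is simply to apply \cref{lem:amplification} with $G = K_{\Delta_B, \Delta_A}$ and then to identify the two amplified graphs $G^{\curlyvee r}$ and $G^{\curlywedge r^{k_A}}$ explicitly as complete bipartite graphs. First I would fix the convention: writing the bipartition of $K_{\Delta_B, \Delta_A}$ as $A \sqcup B$, the part $A$ (the one whose vertices receive lists of size $k_A$) has $\Delta_B$ vertices and the part $B$ has $\Delta_A$ vertices, each vertex being adjacent to every vertex of the other part.

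For the blowup, by definition $A' = A \times [r]$ has $r\Delta_B$ vertices and $B' = B^r$ has $\Delta_A^r$ vertices; since in the complete bipartite $G$ every pair $v \in A$, $v_i \in B$ is an edge, the adjacency rule for $G^{\curlyvee r}$ makes every $(v,i) \in A'$ adjacent to every $(v_1,\dots,v_r) \in B'$, so $G^{\curlyvee r} = K_{r\Delta_B, \Delta_A^r}$. For the expansion with parameter $r^{k_A}$, each of the $\Delta_B$ vertices of $A$ is replaced by $r^{k_A}$ twins all joined to all of $B$, so $A'$ has $r^{k_A}\Delta_B$ vertices, $B' = B$ has $\Delta_A$ vertices, and again the result is complete bipartite, i.e.\ $G^{\curlywedge r^{k_A}} = K_{r^{k_A}\Delta_B, \Delta_A}$. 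Then \cref{lem:amplification} applied to $G$ says that neither $G^{\curlyvee r}$ nor $G^{\curlywedge r^{k_A}}$ is $(k_A, rk_B)$-choosable, which is precisely the asserted statement.

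Since this is a direct specialization, I do not expect any real obstacle; the only points requiring a moment of care are tracking which part of $K_{\Delta_B,\Delta_A}$ plays the role of $A$ and which the role of $B$ in the definitions of blowup and expansion, and confirming that both operations preserve completeness — which they do, because adjacency in $G^{\curlyvee r}$ and $G^{\curlywedge r}$ is inherited from adjacency in $G$, which here is total. (One could instead re-derive the corollary from scratch by writing out explicit unchoosable list assignments directly on the larger complete bipartite graphs, but that would merely reproduce the proof of \cref{lem:amplification} in a less general setting.)
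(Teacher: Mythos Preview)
Your proposal is correct and matches the paper's approach exactly: the paper presents \cref{cor:completeamp} simply as the specialization of \cref{lem:amplification} to $G = K_{\Delta_B,\Delta_A}$, and your verification that $G^{\curlyvee r} = K_{r\Delta_B,\Delta_A^r}$ and $G^{\curlywedge r^{k_A}} = K_{r^{k_A}\Delta_B,\Delta_A}$ (with the correct identification of which part is $A$) is the only thing to check. The paper in fact omits even this much, stating the corollary without proof as a direct rewriting of the lemma.
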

\begin{examp}
It is easy to see that $G = K_{1, k_A}$ is not $(k_A, 1)$-choosable. Therefore, $(G^{\curlyvee r})^{\curlywedge a^{k_A}} = K_{ra^{k_A},k_A^r}$ is not $(k_A, ra)$-choosable. This reproduces \cref{cor:simcount}.
\end{examp}

Based on this, we can now remove various conditions from \cref{lem:chernoff} to produce the following general statement:
\begin{lem} \label{lem:chernofffree}
If $\xi < \alpha(k_A)$, then $G=K_{\Delta_B,\Delta_A}$ is $(k_A,k_B)$-choosable.
\end{lem}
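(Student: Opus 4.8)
The goal is to upgrade \cref{lem:chernoff} — which requires $\Delta_A$ to be sufficiently large (in terms of $\eps$, hence in terms of how close $\xi$ is to $\alpha(k_A)$) — into a clean statement with no lower bound on $\Delta_A$ at all. The plan is to use \cref{cor:completeamp} to amplify a hypothetical counterexample with small $\Delta_A$ into one with large $\Delta_A$, contradicting \cref{lem:chernoff}. So suppose $\xi(\Delta_A,\Delta_B,k_A,k_B) < \alpha(k_A)$ but $G = K_{\Delta_B,\Delta_A}$ is not $(k_A,k_B)$-choosable. (If $\Delta_A < k_A$ or $\Delta_B < k_B$ we are done by \cref{prop:trivchoosing}, so assume $\Delta_A \geq k_A \geq 2$, which in particular makes $\log \Delta_A$ bounded away from $0$.) The idea is to apply the first amplification in \cref{cor:completeamp} repeatedly: starting from a counterexample for $(k_A, k_B)$ on $K_{\Delta_B, \Delta_A}$, the $r$-fold blowup gives a counterexample for $(k_A, rk_B)$ on $K_{r\Delta_B, \Delta_A^r}$.

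The key computation is to check how $\xi$ transforms under this blowup. For $K_{r\Delta_B, \Delta_A^r}$ with list sizes $(k_A, rk_B)$ we have
\[
\xi(\Delta_A^r, r\Delta_B, k_A, rk_B) = \frac{(r\Delta_B)\,(r\log\Delta_A)^{k_A-1}}{(rk_B)^{k_A}} = \frac{r^{k_A}\,\Delta_B\log(\Delta_A)^{k_A-1}}{r^{k_A}\,k_B^{k_A}} = \xi(\Delta_A,\Delta_B,k_A,k_B).
\]
Thus $\xi$ is \emph{exactly invariant} under this amplification — this is precisely why $\xi$ is the natural parameter — while $\Delta_A^r \to \infty$ as $r \to \infty$. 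So if our original $G$ is not $(k_A,k_B)$-choosable with $\xi < \alpha(k_A)$, then for every $r$ the graph $K_{r\Delta_B,\Delta_A^r}$ is not $(k_A, rk_B)$-choosable and still has $\xi < \alpha(k_A)$. Now fix $\eps > 0$ small enough that $\xi < (1-\eps)\alpha(k_A)$ — this is possible since $\xi$ is a fixed number strictly below $\alpha(k_A)$ — and let $\Delta_0 = \Delta_0(\eps)$ be the threshold from \cref{lem:chernoff}. Choosing $r$ large enough that $\Delta_A^r > \Delta_0$ (using $\Delta_A \geq 2$), \cref{lem:chernoff} asserts that $K_{r\Delta_B, \Delta_A^r}$ \emph{is} $(k_A, rk_B)$-choosable, a contradiction.

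A minor point to handle: \cref{lem:chernoff} is stated for $k_A > 1$, so the case $k_A = 1$ should be dispatched separately — but when $k_A = 1$ one has $\xi = \Delta_B$, and $\alpha(1)$ is the maximum of $u$ on $[0,1]$, namely $1$; since $\Delta_B \geq 1$ always, the hypothesis $\xi < 1$ is never satisfied, so the statement is vacuous for $k_A = 1$ (alternatively, one simply restricts to $k_A \geq 2$ throughout, consistent with \cref{lem:chernoff}). I should also double-check that the hypotheses "$\Delta_A \geq k_A$" and "$\Delta_B \geq k_B$" needed to invoke \cref{lem:chernoff} on the amplified graph are preserved: indeed $\Delta_A^r \geq \Delta_A \geq k_A$ and $r\Delta_B \geq \Delta_B \geq k_B \geq rk_B$ only if $r=1$ — wait, we need $r\Delta_B \geq rk_B$, i.e. $\Delta_B \geq k_B$, which holds. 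Good.

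The main obstacle is essentially conceptual rather than technical: recognizing that the quantity $\xi$ is invariant under the blowup amplification (which the paper has set up precisely for this purpose via \cref{cor:completeamp}), and then noting that the $\eps$ in \cref{lem:chernoff} may be chosen depending only on the fixed gap $\alpha(k_A) - \xi$, after which the required lower bound on $\Delta_A$ is achieved for free by iterating the amplification. Everything else is the one-line check above that $\xi$ is unchanged and $\Delta_A$ grows.
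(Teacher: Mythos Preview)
Your argument is correct and essentially identical to the paper's: both proceed by contradiction, apply the blowup amplification from \cref{cor:completeamp} to obtain $K_{r\Delta_B,\Delta_A^r}$, observe that $\xi$ is invariant while $\Delta_A^r\to\infty$, and then invoke \cref{lem:chernoff}. One slip: in the $k_A=1$ case you write $\xi=\Delta_B$, but in fact $\xi=\Delta_B/k_B$, so the hypothesis $\xi<\alpha(1)=1$ is not vacuous --- it says $\Delta_B<k_B$, which is exactly the condition under which \cref{prop:trivchoosing} applies (this is how the paper handles it).
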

\begin{proof}
If $k_A = 1$, then it is easy to show that $G$ is $(k_A,k_B)$-choosable if and only if $\Delta_B < k_B$. Accordingly, $\xi = \Delta_B/k_B$ and $\alpha(1) = 1$. Now assume $k_A > 1$.

If $\Delta_A = 1$ we can apply \cref{prop:trivchoosing}. Otherwise, suppose $G = K_{\Delta_B, \Delta_A}$ is not $(k_A, k_B)$-choosable. Then, by \cref{cor:completeamp}, we have that $K_{r\Delta_B, \Delta_A^r}$ is not $(k_A, rk_B)$-choosable. However,
\[\xi(\Delta_A^r, r\Delta_B, k_A, rk_B) = \frac{r\Delta_B \log (\Delta_A^r)^{k_A}}{(rk_B)^{k_A-1}} = \xi(\Delta_A, \Delta_B, k_A, k_B) < \alpha(k_A),\]
which implies by \cref{lem:chernoff} that  $K_{r\Delta_B, \Delta_A^r}$ is $(k_A, rk_B)$-choosable for sufficiently large $r$. This is a contradiction.
\end{proof}

Combining \cref{prop:smoothupper} and \cref{lem:chernofffree} proves \cref{thm:gensandwich}.

Motivated by the ability to amplify a given unchoosable graph into larger ones, we define the quantity $\xi_m(k_A)$ to describe the ``smallest'' non-choosable graph. It will turn out to be crucial for understanding certain asymptotics.

\begin{defn}
If $k_A$ is a fixed positive integer, let $\xi_m(k_A)$ be the infimum of $\xi$ over all $(\Delta_A, \Delta_B, k_B)$ such that $K_{\Delta_B,\Delta_A}$ is not $(k_A, k_B)$-choosable.
\end{defn}

\begin{prop} \label{prop:ximbounds}
$\alpha(k_A) \leq \xi_m(k_A) \leq (\log k_A)^{k_A - 1}$.
\end{prop}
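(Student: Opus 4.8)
The plan is to establish the two inequalities separately, both by producing or invoking explicit families of unchoosable complete bipartite graphs and comparing their $\xi$ values to the claimed bounds.

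For the lower bound $\alpha(k_A) \leq \xi_m(k_A)$, I would argue by contradiction exactly as in the proof of \cref{lem:chernofffree}: if some $G = K_{\Delta_B, \Delta_A}$ with $\xi < \alpha(k_A)$ were not $(k_A, k_B)$-choosable, then $\xi_m(k_A) \leq \xi < \alpha(k_A)$ would be possible, but \cref{lem:chernofffree} says every such $G$ \emph{is} $(k_A, k_B)$-choosable. Hence no unchoosable $G$ has $\xi < \alpha(k_A)$, so the infimum defining $\xi_m(k_A)$ is at least $\alpha(k_A)$. (One should note the trivial case $k_A = 1$, where $\alpha(1) = 1$ and $\xi_m(1) = 1$ as observed in the proof of \cref{lem:chernofffree}, is consistent.)

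For the upper bound $\xi_m(k_A) \leq (\log k_A)^{k_A-1}$, I would exhibit a sequence of unchoosable graphs whose $\xi$ values approach $(\log k_A)^{k_A-1}$ from above (or are eventually below $(\log k_A)^{k_A-1} + \eps$ for every $\eps$). The natural candidates come from \cref{cor:simcount}: the graph $K_{a^{k_A} r,\, k_A^r}$ is not $(k_A, ar)$-choosable, so with $k_B = ar$ we get
\[
\xi = \frac{a^{k_A} r \cdot \log(k_A^r)^{k_A-1}}{(ar)^{k_A}} = \frac{r \cdot (r\log k_A)^{k_A-1}}{r^{k_A}} = (\log k_A)^{k_A - 1},
\]
which already gives $\xi = (\log k_A)^{k_A-1}$ exactly, hence $\xi_m(k_A) \leq (\log k_A)^{k_A-1}$. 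The point is simply that this family achieves the target value of $\xi$ on the nose for every choice of $a$ and $r$, so the infimum over all unchoosable graphs is at most this common value.

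The main subtlety — really the only place requiring care — is the definition of $\xi_m(k_A)$ as an infimum: one must check that the constraints $\Delta_A \geq k_A$ and $\Delta_B \geq k_B$ implicit in the problem are satisfied by the witnessing family (here $\Delta_A = k_A^r \geq k_A$ and $\Delta_B = a^{k_A} r \geq ar = k_B$ hold for all $a, r \geq 1$), and that the lower-bound argument genuinely covers \emph{all} unchoosable graphs rather than just those in some special family — which it does, since \cref{lem:chernofffree} is stated for arbitrary complete bipartite $G$. I expect no real obstacle beyond bookkeeping; the substantive work (the Chernoff argument and the counterexample construction) has already been done in \cref{lem:chernoff}, \cref{lem:chernofffree}, and \cref{cor:simcount}.
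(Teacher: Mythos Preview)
Your proposal is correct and takes essentially the same approach as the paper: the paper's proof is the single line ``The proof follows from \cref{lem:chernofffree} and \cref{cor:simcount},'' and you have simply unpacked exactly those two citations, including the explicit computation showing that the family $K_{a^{k_A}r,\,k_A^r}$ from \cref{cor:simcount} has $\xi = (\log k_A)^{k_A-1}$ on the nose. Your additional checks (the constraints $\Delta_A \geq k_A$, $\Delta_B \geq k_B$, and the $k_A = 1$ case) are fine but not required, since any unchoosable $G$ automatically satisfies those constraints by \cref{prop:trivchoosing}.
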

\begin{proof}
The lower bound follows from \cref{lem:chernofffree}. The upper bound follows from any part of \cref{cor:simcount}.
\end{proof}

We conclude this section with proofs of \cref{thm:lopsidedasymptotic} and \cref{thm:eqasymptotic}.
\begin{proof}[Proof of \cref{thm:lopsidedasymptotic}]
If $k_A = 1$ then $G$ is $(k_A,k_B)$-choosable if and only if $\Delta_B < k_B$. Thus $c(\Delta_A) = 1$ and $\xi_m(1) = 1$, so the theorem is true in this case. Now assume $k_A > 1$.

Fix a value of $\Delta_A \geq k_A$. Suppose that $G = K_{\Delta_B,\Delta_A}$ is not $(k_A, k_0)$-choosable if $\Delta_B = c_0k_0^{k_A}$, for a positive integer $k_0$ and positive real $c_0$. Then, by considering $G^{\curlywedge \ceil{k_B/k_0}^{k_A}}$, by \cref{cor:completeamp} we get that $K_{\Delta_B',\Delta_A}$ is not $(k_A, \ceil{k_B/k_0}k_0)$-choosable and thus not $(k_A,k_B)$-choosable if $\Delta'_B = c_0 \ceil{k_B/k_0}^{k_A} k_0^{k_A} \sim c_0k_B^{k_A}$.

Let $c(\Delta_A)$ be the infimum of $c_0$ over all such $c_0, k_0$. By the definition of $c(\Delta_A)$, the minimum $\Delta_B$ such that $G$ is not $(k_A,k_B)$-choosable is at least $c(\Delta_A)k_B^{k_A}$. Moreover, by the previous paragraph, it is, for all $\eps > 0$, eventually bounded above by $(c(\Delta_A) + \eps) k_B^{k_A}$. Since $\xi_m(k_A) > 0$, $c(\Delta_A) > 0$, so the minimum $\Delta_B$ is indeed asymptotic to $c(\Delta_A) k_B^{k_A}$.

Now we consider varying $\Delta_A$. By monotonicity we have that $c(\Delta_A)$ is nonincreasing in $\Delta_A$. Moreover, suppose that $G = K_{\Delta_B,\Delta_A}$ is not $(k_A, k_B)$-choosable for a given $\Delta_A$ and $\Delta_B = c_0 k_B^{k_A}$. Then, for any positive integer $r$, by \cref{cor:completeamp} we get that $G^{\curlyvee r} = K_{c_0 r k_B^{k_A}, \Delta_A^r}$ is not $(k_A, rk_B)$-choosable. We conclude that $c(\Delta_A^r) \leq c(\Delta_A)/r^{k_A - 1}$.

The definition of $\xi_m(k_A)$ may be rearranged to
\[\xi_m(k_A) = \inf_{\Delta_A \geq k_A} c(\Delta_A) \log (\Delta_A)^{k_A - 1},\] so  $c(\Delta_A) \geq \xi_m(k_A) /\log (\Delta_A)^{k_A - 1}$. Moreover, given some integer $\Delta_0 \geq k_A$,
\[c(\Delta_A) \leq c(\Delta_0^{\floor*{\frac{\log \Delta_A}{\log \Delta_0}}}) \leq c(\Delta_0)/\floor*{\frac{\log \Delta_A}{\log \Delta_0}}^{k_A - 1} \sim \frac{c(\Delta_0) \log (\Delta_0)^{k_A - 1}}{\log (\Delta_A)^{k_A - 1}},\]
so for every $\eps > 0$ we have $c(\Delta_A) \leq (\xi_m(k_A) + \eps)/\log (\Delta_A)^{k_A-1}$ for sufficiently large $\Delta_A$. Since $\xi_m(k_A) > 0$, this proves $c(\Delta_A) \sim \xi_m(k_A)/\log (\Delta_A)^{k_A-1}$.
\end{proof}

\begin{proof}[Proof of \cref{thm:eqasymptotic}]
For a positive integer $\Delta$, let $k(\Delta)$ be the maximum positive integer $k_B$ such that $K_{\Delta,\Delta}$ is not $(k_A, k_B)$-choosable, or $0$ if no such $k_B$ exists. Note that $k(\Delta)$ must exist since $K_{\Delta,\Delta}$ is $(k_A, \Delta+1)$-choosable. We claim that $k(\Delta) \sim (\Delta \log(\Delta)^{k_A - 1}/\xi_m(k_A))^{1/k_A}$.

We first deduce the theorem assuming this claim. For a positive integer $i$, let $\Delta_i$ be the minimum $\Delta$ such that $K_{\Delta,\Delta}$ is not $(k_A, i)$-choosable. By monotonicity, $\Delta_i$ is also the minimum $\Delta$ such that $i \leq k(\Delta)$. Hence, if $\Delta_i > 1$, $k(\Delta_i - 1) < i \leq k(\Delta_i)$. Moreover, since $k(\Delta)$ exists for all $\Delta$, we also have $\lim_{i \to \infty} \Delta_i = \infty$.

Since
\[1 = \lim_{i\to\infty} \frac{k(\Delta_i - 1)}{(\Delta_i \log(\Delta_i)^{k_A - 1}/\xi_m(k_A))^{1/k_A}} = \lim_{i\to\infty} \frac{k(\Delta_i)}{(\Delta_i \log(\Delta_i)^{k_A - 1}/\xi_m(k_A))^{1/k_A}},\]
we conclude that
\[1 = \lim_{i\to\infty} \frac{i}{(\Delta_i \log(\Delta_i)^{k_A - 1}/\xi_m(k_A))^{1/k_A}}. \tag{$*$} \label{eq:Deltaiasymp}\]
The statement of \cref{thm:eqasymptotic} is equivalent to
\[1 = \lim_{i\to\infty} \frac{\xi_m(k_A) i^{k_A}}{(k_A \log(i))^{k_A - 1} \Delta_i}.\]
To finish, it suffices to show that $\log(\Delta_i) \sim k_A \log(i)$, which is apparent by taking the logarithm of (\ref{eq:Deltaiasymp}).

We now prove that our claim that $k(\Delta) \sim (\Delta \log(\Delta)^{k_A - 1}/\xi_m(k_A))^{1/k_A}$. First of all, by the definition of $\xi_m(k_A)$,
\[\frac{\Delta \log(\Delta)^{k_A - 1}}{k(\Delta)^{k_A}} \geq \xi_m(k_A) \iff k(\Delta) \leq (\Delta \log(\Delta)^{k_A - 1}/\xi_m(k_A))^{1/k_A}.\]

Now, suppose that $G = K_{\Delta_B, \Delta_A}$ is not $(k_A, k_B)$-choosable. Letting
\[r_1 = \floor*{\paren*{\frac{\Delta \log(\Delta_A)}{\log(\Delta) \Delta_B}}^{1/k_A}} \text{ and } r_2 = \floor*{\frac{\log(\Delta)}{\log(\Delta_A)}},\]
we obtain that $(G^{\curlywedge r_1^{k_A}})^{\curlyvee r_2} = K_{r_1^{k_A}r_2 \Delta_B, \Delta_A^{r_2}}$ is not $(k_A, r_1r_2k_B)$-choosable. However, it is easily checked that $r_1^{k_A}r_2\Delta_B, \Delta_A^{r_2} \leq \Delta$, so $k(\Delta) \geq r_1r_2 k_B$. In particular,
\[\liminf_{\Delta\to\infty} \frac{k(\Delta)}{(\Delta \log(\Delta)^{k_A - 1}/\xi_m(k_A))^{1/k_A}} \geq \frac{k_B \xi_m(k_A)^{1/k_A}}{\Delta_B^{1/k_A} \log(\Delta_A)^{1-1/k_A}},\]
which can be made arbitrarily close to $1$ by the definition of $\xi_m(k_A)$. This concludes the proof.
\end{proof}

\begin{rmk}
The statements of \cref{thm:lopsidedasymptotic} and \cref{thm:eqasymptotic} imply that the threshold between choosable and non-choosable graphs occurs at $\xi \approx \xi_m(k_A)$ in the regimes $\Delta_B \gg \Delta_A$ and $\Delta_A \approx \Delta_B$. Similar techniques will imply the same whenever $\Delta_A$ grows subexponentially in $\Delta_B$. As an example of when $\xi_m(k_A)$ ceases to be relevant, observe that for fixed $\Delta_B$, the largest $k_B$ such that $K_{\Delta_B, \Delta_A}$ is not $(k_A,k_B)$-choosable never exceeds $\Delta_B$.
\end{rmk}

\section{Independent Sets and Bounds on \texorpdfstring{$\xi_m(2)$}{ξ\_m(2)}} \label{sec:two}
In this section we phrase the $(k_A, k_B)$-choosability of complete bipartite graphs in terms of the set avoidance of independent sets of a ``color hypergraph'' $H$. We then use this formulation to prove \cref{thm:twobound}.

\subsection{The color hypergraph}
In this section we prove \cref{lem:colorgraph}. The main idea is that, given a $(k_A, k_B)$-list on a complete bipartite graph, the $k_A$-sets of colors assigned to vertices in $A$ have a natural ``graph-like'' structure.

\begin{proof}[Proof of \cref{lem:colorgraph}]
Call a list assignment $L$ on $G$ \emph{maximal} if no two vertices in $A$ or no two vertices in $B$ are assigned the same set of colors. It is easy to see that if $G$ has an unchoosable list assignment, than it has an unchoosable maximal list assignment as well, so as far as the choosability of $G$ is concerned, non-maximal lists can be ignored.

Given a maximal list assignment $L$ on $G$, define $H(L)$ to be the hypergraph with set of vertices $C$ and edges $\setmid{L(v)}{v\in A}$. Also, let $\calf(L)$ be the set of subsets of size $k_B$ of $C$ that are $L(v)$ for some $v \in B$. 

Since any $k_A$-uniform hypergraph $H$ with $\Delta_B$ edges and family $\calf$ of $\Delta_A$ sets of size $k_B$ of the vertices of $H$ can be represented as $H(L)$ and $\calf(L)$ for some list assignment $L$, it suffices to show that $L$ is unchoosable if and only if every independent set in $H(L)$ is disjoint from an element of $\calf(L)$. If there exists a coloring $c$ consistent with $L$, then $I = \setmid{c(v)}{v \in B}$ intersects every element of $\calf(L)$, but is an independent set of $H$ since for all $v \in A$ there must exist some color in $L(v)$ not in $I$.

Conversely, if $I$ is an independent set of $H$ intersecting every subset in $\calf$, then for every vertex $v \in A$ the set $L(v)$ intersects the complement of $I$, while for every vertex $v \in B$ the set $L(v)$ intersects $I$. Thus one may construct a valid coloring of $G$ by coloring each vertex $v \in A$ with element of $L(v) \setminus I$ and each vertex in $v \in B$ with an element of $L(v) \cap I$.
\end{proof}

\begin{examp}
If $H = K_{k, k}$, then every independent set in $H$ must be disjoint from one of its parts (or both if it is empty). Therefore $K_{k^2, 2}$ is not $(2, k)$-choosable. More generally, in the case where $k_A = 2$, the construction in \cref{cor:simcount} corresponds to $H$ being $r$ disjoint copies of $K_{a, a}$, which can be shown to not be $(2, ar)$-choosable by letting $\calf$ be the $2^r$ ways to choose one part from each copy of $K_{a,a}$.
\end{examp}

\subsection{A probabilistic algorithm}
For the remainder of this section we prove the lower bound in \cref{thm:twobound}, as the upper bound has already been proven in \cref{prop:ximbounds}. Specifically, given a graph $H$ and a subset family $\calf$, we will probabilistically construct an independent set  that intersects every element of $\calf$. The algorithm used to construct this independent set $I$ is actually quite simple: one chooses a uniformly random order of the vertices and then processes every vertex in that order. The action of \emph{processing} a vertex $v$ consists of adding $v$ to $I$ if none of its neighbors are already in $I$.

In order to prove \cref{thm:twobound} we need to analyze the probability that $I$ is disjoint from any given set $S$.

\begin{prop} \label{prop:probsetup}
If $I \cap S = \emptyset$, then every vertex in $S$ has a neighbor not in $S$ that was processed before it.
\end{prop}
\begin{proof}
Take a vertex $v \in S$. Since $v \notin I$, at the time of processing $v$, some vertex $v'$ in its neighborhood must have been previously processed and placed in $I$. It cannot be the case that $v' \in S$, so we must have $v' \in N(v) \setminus S$.
\end{proof}

For any given subset $S'$ of the vertices of $H$, the order in which the vertices in $S'$ are processed follows a uniform distribution on the set of orderings of $S'$. Hence, \cref{prop:probsetup} allows the probability that $I$ and $S$ are disjoint to be upper-bounded by a probability that is dependent only on the structure of $H$ ``near $S$''. Specifically, we make the following definitions:
\begin{defn}
Let $H'$ be a bipartite graph with bipartition $A \sqcup B$. Given a order on the vertices of $H'$, call a vertex $v \in A$ \emph{shadowed} if it has a neighbor in $H'$ that precedes it. Let $p(H')$ be the probability that every vertex in $A$ is shadowed under a uniformly random order of the vertices of $H'$.
\end{defn}
\begin{defn}
For a subset $S$ of the vertices of $H$, let $T_S$ denote the set $\bigcup_{v \in S} N(v) \setminus S$. Let $H_S$ be the bipartite subgraph of $H$ with bipartition $S \sqcup T_S$, consisting of all edges in $H$ between $S$ and $T_S$.
\end{defn}
In this language, we now have the following corollary of \cref{prop:probsetup}:
\begin{cor} \label{cor:phs}
The probability that $I \cap S = \emptyset$ is at most $p(H_S)$.
\end{cor}

We now proceed to bound $p(H_S)$, with our main tool being the following recursion:
\begin{lem} \label{lem:precursion}
Let $H'$ be a nonempty bipartite graph with bipartition $A \sqcup B$. Then $p(H') = \frac{1}{\abs{A} + \abs{B}} \sum_{v \in B} p(H' \setminus N[v])$, where $H' \setminus N[v]$ denotes the subgraph of $H'$ obtained by deleting the vertices in the closed neighborhood $N[v]$.
\end{lem}
\begin{proof}
Consider the first vertex $v$ in $H'$ in the order. If it is in $A$, it cannot be shadowed. Otherwise, $v \in B$ shadows all of its neighbors, so whether all other vertices in $A$ are shadowed only depends on the relative ordering of the vertices in $H'$ that are not $v$ or a neighbor of $v$. Since any such ordering is equally likely, the probability that every vertex in $A$ is shadowed conditional on $v$ being chosen first is $p(H' \setminus N[v])$. This concludes the proof.
\end{proof}
\begin{lem} \label{lem:fancybound}
Let $H', A, B$ be as in \cref{lem:precursion}, with the additional condition that $A$ is nonempty. If $H'$ has no edges then $p(H') = 0$. Otherwise
\[p(H') \leq \paren*{1 + \frac{\abs{A} \Delta}{\abs{E}}}^{-\abs{A}/\Delta},\]
where $\abs{E}$ is the number of edges of $H'$ and $\Delta$ is the maximum degree among any vertex in $B$.
\end{lem}
\begin{rmk} \label{rmk:fancyboundadd}
When applying \cref{lem:fancybound}, we will frequently use the easy-to-show fact that this bound is increasing in $\Delta$, meaning the bound of \cref{lem:fancybound} holds even when $\Delta$ is replaced with any positive integer $r \geq \Delta$.
\end{rmk}

Before we state the proof of \cref{lem:fancybound}, we need a quick result about quantities related to the degrees of vertices in $B$:
\begin{defn}
For a given $H'$ with bipartition $A \sqcup B$ and $\abs{B} = \ell$, let $v_1, v_2, \ldots, v_\ell$ be the vertices of $B$. Let $d_i$ be the degree of $v_i$ and $D_i$ be the sum of the degrees of the neighbors of $v_i$.
\end{defn}

\begin{prop} \label{prop:extdegree}
$\sum_i d_i^2/D_i \leq \abs{A}$.
\end{prop}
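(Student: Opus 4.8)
The plan is to apply the Cauchy--Schwarz inequality to each summand individually and then double-count the edges of $H_S$. For $u \in S$ write $e_u$ for the degree of $u$ in $H_S$. By the definition of $T$, every vertex $v_i \in T$ has at least one neighbor in $S$, so $d_i \geq 1$; and any such neighbor $u$ has $e_u \geq 1$, so $D_i = \sum_{u \sim v_i} e_u \geq 1 > 0$ and every term of the sum is well-defined.

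First I would bound a single term. By Cauchy--Schwarz, with all sums taken over the neighbors $u \in S$ of $v_i$,
\[
d_i^2 = \paren*{\sum_{u \sim v_i} 1}^2 \leq \paren*{\sum_{u \sim v_i} e_u}\paren*{\sum_{u \sim v_i} \frac{1}{e_u}} = D_i \sum_{u \sim v_i} \frac{1}{e_u}.
\]
Dividing through by $D_i$ gives $d_i^2/D_i \leq \sum_{u \sim v_i} 1/e_u$. Summing over $i$ and interchanging the order of summation, each edge $uv_i$ of $H_S$ contributes the single term $1/e_u$, and grouping these by $u$ yields $e_u$ copies of $1/e_u$ for each $u \in S$ of positive degree:
\[
\sum_i \frac{d_i^2}{D_i} \leq \sum_i \sum_{u \sim v_i} \frac{1}{e_u} = \sum_{u \in S,\ e_u > 0} e_u \cdot \frac{1}{e_u} = \abs{\setmid{u \in S}{e_u > 0}} \leq \abs{S},
\]
which is exactly the claim.

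There is no serious obstacle here: the proof is short once one recognizes the Cauchy--Schwarz-plus-double-counting structure, and it is precisely the kind of estimate one expects to feed into the bound of \cref{lem:fancybound}. The only points requiring a little care are the degenerate cases --- ensuring $D_i \neq 0$ (handled above via $d_i \geq 1$) and noting that vertices of $S$ with no neighbor in $T$, which can arise when $\calf$ is chosen adversarially, simply drop out of the double sum and so only strengthen the inequality.
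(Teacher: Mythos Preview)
Your proof is correct and is essentially the same as the paper's: the paper phrases the per-vertex inequality $d_i^2/D_i \leq \sum_{u \in N(v_i)} 1/\deg u$ via convexity of $x \mapsto 1/x$ (Jensen) rather than Cauchy--Schwarz, but these are equivalent here, and the subsequent double-counting step is identical. Your extra care with the degenerate cases (well-definedness of $D_i$ and isolated vertices of $S$) is fine and matches the paper's implicit handling.
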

\begin{proof}
Since the function $x \mapsto 1/x$ is convex, by Jensen's inequality we have
\[\sum_{u \in N(v_i)} \frac{1}{\deg u} \geq \frac{d_i}{D_i/d_i}.\]
Now sum over all $i$. The left hand side becomes
\[\sum_i \sum_{u \in N(v_i)} \frac{1}{\deg u} = \sum_{u \in A'} \frac{\deg u}{\deg u} \leq \abs{A},\]
where $A'$ is the set of non-isolated vertices in $A$.
\end{proof}

We will also need the following inequality. As the proof is rather tedious, we defer it to \cref{append:ineq}.

\begin{lem} \label{lem:tedious}
If $a$, $b$, $\beta$, and $\gamma$ are nonnegative reals satisfying $a \leq 1$ and $\gamma > \max(a,b)$, then
\[
\paren*{1+\beta\frac{\gamma - a}{\gamma - b}}^{-(\gamma-a)} \leq (1+\beta)^{-\gamma}(1 + \beta a^2/b).
\]
\end{lem}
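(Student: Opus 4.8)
The plan is to take logarithms and prove the equivalent statement that
\[
g(\beta) := \gamma\log(1+\beta) - (\gamma-a)\log\!\left(1 + \beta\,\tfrac{\gamma-a}{\gamma-b}\right) - \log\!\left(1 + \tfrac{\beta a^2}{b}\right) \le 0
\]
for all $\beta \ge 0$, where we may assume $b > 0$ (if $b = 0$ the right side of the lemma is $+\infty$ unless $a = 0$, and the case $a = b = 0$ gives equality) together with $0 \le a \le 1$ and $\gamma > \max(a,b) \ge 0$. Since $g(0) = 0$, it suffices to show $g'(\beta) \le 0$ on $\beta > 0$.

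Computing,
\[
g'(\beta) = \frac{\gamma}{1+\beta} - \frac{(\gamma-a)^2}{(\gamma-b) + \beta(\gamma-a)} - \frac{a^2}{b + \beta a^2},
\]
where all three denominators are positive. Clearing denominators (which preserves the inequality) converts $g'(\beta) \le 0$ into the claim that
\[
Q(\beta) := a^2(1-a)(\gamma-a)\,\beta^2 + a(1-a)\big(a(2\gamma-a) - \gamma b\big)\,\beta + \gamma(a-b)^2 \ge 0
\]
for $\beta \ge 0$; a routine expansion confirms these coefficients. The key observation is that the $\beta^2$- and $\beta^0$-coefficients are visibly nonnegative, so if $P := a(2\gamma-a) - \gamma b \ge 0$ then $Q(\beta) \ge 0$ on $\beta \ge 0$ for free.

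The remaining case is $P < 0$, which forces $0 < a < 1$ and $a < b$; then $Q$ is a genuine upward parabola whose minimum on $\beta \ge 0$ is attained at a positive argument, so $Q \ge 0$ there precisely when its discriminant is nonpositive, i.e.\
\[
(1-a)P^2 \le 4\gamma(\gamma-a)(a-b)^2.
\]
Writing $P = \gamma(a-b) + a(\gamma-a)$ and substituting $s := \gamma(b-a) > 0$ and $t := a(\gamma-a) \ge 0$ --- so that $P = -(s-t)$ with $s > t$, and $(a-b)^2 = s^2/\gamma^2$ --- this becomes $(1-a)(s-t)^2 \le \tfrac{4(\gamma-a)}{\gamma}\,s^2$. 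I would then treat $\Phi(s) := \tfrac{4(\gamma-a)}{\gamma}\,s^2 - (1-a)(s-t)^2$ as a quadratic in $s$ and observe that $\Phi(t) = \tfrac{4(\gamma-a)}{\gamma}\,t^2 \ge 0$, while --- since $b < \gamma$ forces $s < \gamma(\gamma-a)$ --- the other relevant endpoint satisfies $\Phi(\gamma(\gamma-a)) = (\gamma-a)^3\big(4\gamma - (1-a)(\gamma-a)\big) > 0$, using $(1-a)(\gamma-a) \le \gamma$. If the leading coefficient $\tfrac{4(\gamma-a)}{\gamma} - (1-a)$ is nonnegative, $\Phi$ is convex with vertex at a nonpositive argument, hence nondecreasing on $[0,\infty)$, so $\Phi(s) \ge \Phi(t) \ge 0$ for $s \ge t$; otherwise $\Phi$ is concave and nonnegative at the two endpoints $t$ and $\gamma(\gamma-a)$, hence nonnegative on the whole interval between them. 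Either way the discriminant bound holds.

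I expect the main obstacle to be purely computational: correctly expanding $g'(\beta)\le 0$ into the coefficients of $Q$, and evaluating $\Phi$ at its endpoints, without sign errors. The structural point that makes the argument finite --- and that I would highlight --- is that $g'(\beta)\le 0$ collapses to a quadratic in $\beta$ with transparently nonnegative outer coefficients, so the entire lemma hinges on a single discriminant inequality, which is itself a quadratic in the auxiliary variable $s$ whose values at the two ends of its range are easy to read off.
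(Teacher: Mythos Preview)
Your argument is correct. The overall strategy---reduce to showing $g'(\beta)\le 0$ by differentiating in $\beta$ and clearing the three positive denominators---is exactly what the paper does. Where you diverge is in how you finish off the resulting polynomial inequality: you organize it as a quadratic in $\beta$, read off that the outer coefficients are nonnegative, and in the remaining case run a discriminant argument via the substitution $s=\gamma(b-a)$, $t=a(\gamma-a)$. The paper instead collects the same polynomial as \emph{linear in $\gamma$}, verifies that the coefficient of $\gamma$ is nonnegative (by a short sum-of-squares rewriting), and then checks only the boundary value $\gamma=\max(a,b)$, which splits into two easy factorizations. The paper's route is somewhat shorter and avoids the auxiliary $\Phi$-analysis, while yours has the virtue that the structure (upward parabola, discriminant) is more mechanical once the coefficients of $Q$ are in hand. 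Both are valid completions of the same reduction.
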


\begin{proof}[Proof of \cref{lem:fancybound}]
The case where $H'$ has no edges is obvious, so assume otherwise.

Consider a counterexample $H'$ that minimizes $\abs{B}$. If there is an isolated vertex in $A$, then $p(H') = 0$ and the result is trivial, so assume otherwise.

By \cref{lem:precursion}, we have
\[
p(H') = \frac{1}{\abs{A} + \abs{B}} \sum_i p(H' \setminus N[v_i]).
\]
We now claim that
\[p(H' \setminus N[v_i]) \leq \paren*{1+\frac{\abs{A}\Delta}{\abs{E}}}^{-\abs{A}/\Delta} \paren*{1 + \frac{d_i^2}{D_i}},\]
which implies the result by \cref{prop:extdegree}. In the case where $N(v_i) = A$, we have $d_i = \abs{A}$, $D_i = \abs{E}$, and $\Delta = \abs{A}$, so we wish to prove that
\[1 \leq \paren*{1 + \frac{\abs{A}^2}{\abs{E}}}^{-1} \paren*{1 + \frac{\abs{A}^2}{\abs{E}}},\]
which is true. Otherwise, since $A$ has no isolated vertices, $H' \setminus N[v_i]$ has at least one edge. By the minimality of $H'$ (and \cref{rmk:fancyboundadd}) we therefore have
\[p(H' \setminus N[v_i]) \leq \paren*{1 + \frac{(\abs{A}-d_i)\Delta}{\abs{E}-D_i}}^{-(\abs{A}-d_i)/\Delta},\]
so it remains to show that
\[\paren*{1 + \frac{(\abs{A}-d_i)\Delta}{\abs{E}-D_i}}^{-(\abs{A}-d_i)/\Delta} \leq \paren*{1+\frac{\abs{A}\Delta}{\abs{E}}}^{-\abs{A}/\Delta} \paren*{1 + \frac{d_i^2}{D_i}}.\]
This inequality follows from \cref{lem:tedious} upon substituting $\beta = \abs{A}\Delta/\abs{E}$, $\gamma = \abs{A}/\Delta$, $a = d_i/\Delta$, and $b = D_i \abs{A}/(\abs{E}\Delta)$.
\end{proof}

\begin{proof}[Proof of \cref{thm:twobound}]
The upper bound is due to \cref{prop:ximbounds}. To show the lower bound, we need to show that $K_{\Delta_B,\Delta_A}$ is $(2,k)$-choosable as long as $\Delta_B \log(\Delta_A)/k^2 < \frac{1}{2} \log 3$, which is equivalent to $\Delta_A < 3^{k^2/(2\Delta_B)}$. (Here $k$ serves the role of $k_B$.) By \cref{lem:colorgraph}, we wish to show that for every graph $H$ with $\Delta_B$ edges and a family $\calf$ of $\Delta_A< 3^{k^2/(2\Delta_B)}$ vertex subsets of size $k$, there exists an independent set in $H$ intersecting every element of $\calf$.

Consider the process of iteratively deleting the vertex in $H$ with maximum degree. Suppose the maximum degree after $i$ deletions is $\Delta_i$. Note that after $j$ deletions, the number of edges remaining is $\Delta_B - \sum_{0\leq i < j} \Delta_i$.

We claim that there must exist some $0 \leq i < k$ with $\Delta_i \leq (2k-2i-1)\Delta_B/k^2$ and $\Delta_{i'} > (2k-2i'-1)\Delta_B/k^2$ for all $0 \leq i' < i$. If such an $i$ did not exist, we must have $\Delta_i > (2k-2i-1)\Delta_B/k^2$ for all $0 \leq i < k$, which implies that after $k$ deletions the number of edges is less than $\Delta_B - \sum_{0 \leq i < k}(2k-2i-1)\Delta_B/k^2 = 0$, a contradiction. Note that $H$ has at least $k$ vertices since the elements of $\calf$ have size $k$.

If we assume that $i$ has the above properties, then after $i$ deletions, which yields a graph $H'$, the number of edges is at most
\[\Delta_B - \sum_{0 \leq j<i} \frac{(2k-2j-1) \Delta_B}{k^2} = \frac{\Delta_B(k-i)^2}{k^2}\] and the maximum degree is at most $2(k-i)\Delta_B/k^2$. We choose a random independent set $I$ of $H'$ according to our procedure. Since each set in $\calf$ shares at least $k-i$ vertices with $H'$, by \cref{cor:phs} and \cref{lem:fancybound} the probability that $I$ is disjoint from any fixed set of $\calf$ is at most
\[\paren*{1 + \frac{(k-i)2(k-i)\Delta_B/k^2}{\Delta_B(k-i)^2/k^2}}^{-(k-i)/(2(k-i)\Delta_B/k^2)} = 3^{-k^2/(2\Delta_B)}.\]
Here we have used the fact that the bound $(1+\frac{\abs{A}\Delta}{\abs{E}})^{-\abs{A}/\Delta}$ of \cref{lem:fancybound} is increasing in $\Delta$ (as remarked in \cref{rmk:fancyboundadd}) and $\abs{E}$. Thus some choice of $I$ intersects every element of $\calf$, as desired.
\end{proof}

Examining the proof of \cref{lem:fancybound} suggests that, up to isolated vertices, equality holds only if $H'$ is the disjoint union of several identical complete bipartite graphs. However, such a situation is seemingly contradictory with the fiducial estimate of $\Delta_i \approx 2(k-i)\Delta_B/k^2$. Therefore, it appears unlikely that $\xi_m(2) = \frac{1}{2} \log 3$ in reality. In fact, we make the following conjecture:
\begin{conj} \label{conj:xim2}
$\xi_m(2) = \log 2$.
\end{conj}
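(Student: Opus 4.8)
The upper bound $\xi_m(2) \le \log 2$ is already in hand (\cref{prop:ximbounds}); by \cref{lem:colorgraph} it is realized by the color graph $H = K_{k,k}$ with $\calf$ its two parts, as in the Example following \cref{lem:colorgraph}. So the task is the matching lower bound $\xi_m(2) \ge \log 2$, which by \cref{lem:colorgraph} says: whenever $H$ has $m$ edges and $\calf$ is a family of fewer than $2^{k^2/m}$ $k$-sets, some independent set of $H$ meets every member of $\calf$. The proof of \cref{thm:twobound} gives this with the base $\sqrt 3$ where we want $2$, and the plan is to close that gap.

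I would keep the framework of \cref{sec:two}: sample $I$ by processing the vertices of $H$ in a uniformly random order, greedily including a vertex when none of its neighbors has been taken, and bound the probability that $I \cap S = \emptyset$ by $p(H_S)$ via \cref{prop:probsetup}. A sharp evaluation of $p(H_S)$ — either pushing \cref{lem:fancybound} to its equality case, or computing $p(H_S)$ directly as an expectation over i.i.d.\ uniform weights on the vertices of $T$ — shows that, for a fixed size of $S$ and a fixed number of edges in $H_S$, $p(H_S)$ is maximized by complete bipartite graphs; this isolates $H = K_{k,k}$ as the extremal color graph and reproduces exactly the constant $\log 2$. The slack in \cref{thm:twobound} is localized: the greedy max-degree deletion is used only to control $e(H')$ and $\Delta(H')$ \emph{uniformly} over $\calf$, but the telescoping identity $\sum_{i<k}(2k-2i-1)=k^2$ forces the residual graph's maximum degree up to roughly twice the ``balanced'' value $(k-i)\Delta_B/k^2$, and feeding that into \cref{lem:fancybound} is precisely what degrades the base $2$ to $\sqrt 3$. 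But, as already noted, equality in \cref{lem:fancybound} forces $H_S$ to be a disjoint union of identical complete bipartite graphs, in which the $T$-side degrees are all equal — incompatible with a maximum degree that large — so \cref{lem:fancybound} is being applied far from equality and is giving away more than it must.

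Concretely, I would aim for a \emph{stability} strengthening of \cref{lem:fancybound}, a bound of the shape
\[
p(H_S) \le \paren*{1+\frac{\abs{S}\Delta_T}{\abs{E}}}^{-\abs{S}/\Delta_T} e^{-c\,\rho(H_S)},
\]
where $\rho(H_S)$ measures the deviation of the $T$-degree sequence of $H_S$ from constant (so $\rho=0$ exactly in the disjoint-biclique case), together with the fact that the residual graph from the deletion step necessarily has $\rho$ bounded below — its maximum degree having been driven near the balanced value while its edge count sits strictly below it — by enough to upgrade the base from $3$ to $4$ inside $(1+2)^{-k^2/(2\Delta_B)}$. A competing route avoids committing to a single residual graph: bound $\sum_{S\in\calf}p(H_S)$ globally, exploiting that $\{H_S\}_{S\in\calf}$ is not arbitrary but cut out of the single graph $H$ via the $\binom{C}{k}$-structure, so that only a few members of $\calf$ can have $e(H_S)$ close to $m$ simultaneously; one could then afford the much sharper per-set estimate $p(H_S)\le e(H_S)/(e(H_S)+k^2)$ instead of suffering the loss from replacing $e(H_S)$ with $m$. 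Either route must separately dispose of the regime where $m$ is small relative to $k^2$, so that $2^{k^2/m}$ swamps the number $\binom{\abs{V(H)}}{k}$ of available $k$-sets and a union bound cannot work, but where $\calf$ is correspondingly constrained and can be handled by direct arguments.

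The crux is that this is a rigidity statement: one needs an estimate that is \emph{exactly} tight on the disjoint-complete-bipartite configurations yet quantitatively strictly better on everything else, with margin enough to bridge $\sqrt 3$ to $2$. The convexity behind \cref{lem:fancybound} — the inequality in \cref{prop:extdegree} together with \cref{lem:tedious} — is already essentially lossless along that extremal family, so obtaining more appears to require a sharper tool, plausibly the occupancy or cluster-expansion method for the hard-core model, for which disjoint unions of complete bipartite graphs are exactly solvable and might be shown to be the unique maximizers. Pinning the two sides to the precise value $\log 2$, rather than to some nearby constant, is where I expect the genuine difficulty to lie.
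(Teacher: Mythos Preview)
The statement you are attempting to prove is \cref{conj:xim2}, which the paper explicitly records as a \emph{conjecture}; there is no proof in the paper to compare your proposal against. The paper only establishes \cref{thm:twobound}, the two-sided estimate $\tfrac12\log 3 \le \xi_m(2) \le \log 2$, and then remarks informally on why the lower bound looks improvable before stating \cref{conj:xim2} as open.

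Your write-up is not a proof either, and to your credit you do not pretend otherwise: phrases like ``I would aim for'', ``a competing route'', ``appears to require'', and ``is where I expect the genuine difficulty to lie'' make clear this is a research plan. As such there is no correctness to assess. What you have written is a reasonable diagnosis of where the loss in the proof of \cref{thm:twobound} occurs --- the factor-of-two overshoot in the max-degree bound from the greedy deletion, fed into \cref{lem:fancybound} far from its equality case --- and this matches the paper's own informal remarks after the proof of \cref{thm:twobound}. But the proposed ``stability strengthening'' of \cref{lem:fancybound} with an extra $e^{-c\,\rho(H_S)}$ factor is a wish, not a lemma: you have not shown such a bound holds, nor that the deviation $\rho$ is forced large enough to recover exactly $\log 2$ rather than some intermediate constant. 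The alternative ``global'' route and the appeal to occupancy or cluster-expansion methods are likewise programmatic. In short, you have correctly located the gap between $\tfrac12\log 3$ and $\log 2$, but closing it remains open, as the paper itself acknowledges.
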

One possible way to approach \cref{conj:xim2} is to strengthen \cref{lem:fancybound} so that it relies solely on local parameters of the graph $H'$, instead of the global parameter $\Delta$. In particular, the following bound seems to be true:
\begin{conj} \label{conj:localhs}
Let $H', A, B$ be as in \cref{lem:precursion}, such that there are no isolated vertices in $B$. If $M$ is the minimum of $\sum_i e_i \log(1+e_i)/d_i$ over all nonnegative reals $e_i$ ($1 \leq i \leq \abs{B}$) that sum to $\abs{A}$, then $p(H') \leq e^{-M}$.
\end{conj}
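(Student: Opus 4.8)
The plan is to prove \cref{conj:localhs} by strong induction on $\abs{T}$, following the architecture of the proof of \cref{lem:fancybound} but keeping the individual degrees $d_i$ in play rather than collapsing them into $\Delta_T$. It helps to first record a ``deferred decisions'' description of $p(H_S)$: replace the random processing order by i.i.d.\ uniform labels $x_w \in [0,1]$ for $w \in S \sqcup T$, so that $v \in S$ is blocked exactly when $\min_{u \in N(v)} x_u < x_v$. Conditioning on the labels of $T$, the events ``$v$ is blocked'' ($v \in S$) become independent, each with conditional probability $1 - \min_{u \in N(v)} x_u$, so writing $y_u = 1 - x_u$ we obtain, with $y = (y_u)_{u \in T}$ uniform on $[0,1]^T$ and the convention that an empty maximum is $0$,
\[
p(H_S) = \mathbf{E}\left[\prod_{v \in S} \max_{u \in N(v)} y_u\right].
\]
This makes plain that $p(H_S) = 0$ once some vertex of $S$ has no neighbour, so we may also assume $S$ has no isolated vertices, and it reproduces equality when $H_S$ is a disjoint union of identical complete bipartite graphs, matching the equality analysis preceding \cref{conj:localhs}.

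For the induction, fix a counterexample minimizing $\abs{T}$ and apply \cref{lem:precursion}. Deleting $v_j \in T$ together with $N(v_j) \subseteq S$ yields a graph $H_S^{(j)}$ with $\abs{S} - d_j$ vertices on the $S$-side; after discarding the $T$-vertices that have become isolated, each surviving $v_k$ has degree $d_k^{(j)} = d_k - \abs{N(v_k) \cap N(v_j)}$. Minimality gives $p(H_S^{(j)}) \leq e^{-M_j}$, where $M_j$ is the minimum of $\sum_k e_k \log(1+e_k)/d_k^{(j)}$ over nonnegative weights summing to $\abs{S} - d_j$. Feeding these into \cref{lem:precursion}, it suffices to prove the analytic inequality
\[
\sum_j e^{-M_j} \leq (\abs{S} + \abs{T})\, e^{-M}.
\]
I would attack this by fixing the optimal weight vector $(e_i^{\ast})$ realizing $M$ and, for each $j$, feeding a modified vector into the problem defining $M_j$ — renormalized to sum to $\abs{S} - d_j$ and corrected for the reduced degrees $d_k^{(j)}$ — so that $e^{-M_j}$ is bounded above by an explicit expression in the $e_i^{\ast}$ and the degrees. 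The summation over $j$ should then be absorbed by a convexity estimate in the spirit of \cref{prop:extdegree} (a bound $\sum_j (\text{weight of } v_j) \leq \abs{S}$ obtained by applying convexity of $x \mapsto 1/x$ to the analogues of the $D_i$), reducing matters to a one-variable elementary inequality generalizing \cref{lem:tedious} to nonuniform degrees, which could be relegated to an appendix as was done for \cref{lem:tedious}.

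The principal obstacle, exactly as in the uniform setting, is this final family of elementary inequalities, which is now genuinely multivariate: the graphs $H_S^{(j)}$ carry different degree sequences, and no single parameter plays the smoothing role $\Delta_T$ played in \cref{lem:fancybound}. Compounding this, \cref{conj:localhs} is an \emph{existence} statement — $M$ is a minimum, so any admissible weight vector would do — yet one cannot get away with a naive choice such as the degree-proportional $e_i = d_i \abs{S}/\abs{E}$: already for the $H_S$ that is a disjoint union of two paths on three vertices, one with its midpoint in $S$ and the other with its midpoint in $T$, we have $p(H_S) = 2/9$ while $\prod_i(1 + d_i\abs{S}/\abs{E})^{-\abs{S}/\abs{E}} = (245/32)^{-3/4} < 2/9$, so the optimal vector (or something quantitatively close to it) is essential, and it is not clear that the optimal vector for $H_S$ restricts well to all the $H_S^{(j)}$ simultaneously. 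If the direct induction stalls there, a fallback is a smoothing/compression argument that deforms a putative extremal $H_S$ toward a disjoint union of identical complete bipartite graphs while $p(H_S)\,e^{M(H_S)}$ only increases, using the equality analysis as the guide.
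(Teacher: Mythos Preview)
The paper does not prove \cref{conj:localhs}; it is stated as a conjecture, immediately followed by the remark that the naive ``local'' choice $e_i = f_i$ already fails. So there is no proof to compare against.

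Your proposal is not a proof either, and you say as much. The integral representation $p(H_S) = \mathbf{E}\bigl[\prod_{v \in S} \max_{u \in N(v)} y_u\bigr]$ is correct and pleasant, and the inductive scaffold via \cref{lem:precursion} is the natural generalization of the argument for \cref{lem:fancybound}. But the entire weight of the problem lands on the step you label ``the principal obstacle'': producing, for each deleted vertex $v_j$, an admissible weight vector for $H_S^{(j)}$ whose value controls $e^{-M_j}$ well enough that the sum over $j$ closes up. In the uniform-degree case this reduces to the single scalar inequality of \cref{lem:tedious}; here it is a family of inequalities indexed by the full degree sequence, and you neither state the needed inequality precisely nor indicate why it should hold. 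The fallback ``smoothing toward disjoint unions of identical complete bipartite graphs'' is likewise only a slogan: you would need a deformation that simultaneously controls $p(H_S)$ (an expectation of a product of maxima) and the constrained optimum $M(H_S)$, and no mechanism for that is offered.

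In short, you have correctly located where the difficulty lives---exactly where the paper's authors also stopped and turned the statement into a conjecture---but you have not supplied the missing idea.
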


A useful set of ``local'' parameters that sum to $\abs{A}$ are defined by $f_i = \sum_{u \in N(v_i)} 1/\deg u$, so one might think to prove the following stronger statement:
\[p(H') \leq \prod_i (1+f_i)^{-f_i/d_i}.\]
However, it is false, with the following graph as a counterexample:
\begin{center}
\includegraphics{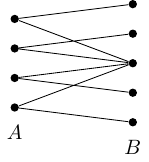}
\end{center}

\section{The Behavior of \texorpdfstring{$\xi_m(k_A)$}{ξ\_m(k\_A)}} \label{sec:metaasy}
\Cref{conj:xim2} may seem to suggest that $\xi_m(k_A) = \log(k_A)^{k_A - 1}$. However, in this section, we define $\xi^*_m(k_A)$ and we prove \cref{thm:limxim}, which disproves this conjecture for large $k_A$. We also show that $\xi_m(k_A) < \log(k_A)^{k_A - 1}$ for smaller values of $k_A$.

\subsection{Concrete bounds on \texorpdfstring{$\xi_m(k_A)$}{ξ\_m(k\_A)}}
Before considering asymptotics, we prove that $\xi_m(k_A) < \log(k_A)^{k_A - 1}$ for many values of $k_A$.
\begin{prop}
$\xi_m(3) < (\log 3)^2$.
\end{prop}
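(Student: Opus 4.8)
The plan is to beat the generic complete-multipartite constructions of \cref{prop:counterexamples} using a projective geometry. By \cref{lem:colorgraph}, it suffices to produce a $3$-uniform hypergraph $H$ with $m$ edges and a family $\calf$ of $n$ vertex sets of size $k$ such that every independent set of $H$ is disjoint from some member of $\calf$, and such that $m(\log n)^2/k^3 < (\log 3)^2$; then $G = K_{m,n}$ is not $(3,k)$-choosable, so $\xi_m(3) \le m(\log n)^2/k^3$. First I would take $H$ to be the Fano plane: the vertex set $V$ has $7$ points, the $7$ edges are the lines (each a $3$-subset of $V$), any two points lie on a unique line, and every point lies on exactly $3$ lines. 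I would then let $\calf$ be the family of all $7$ lines, so $m = n = 7$ and $k = 3$.

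The heart of the argument is verifying that every independent set $I$ of $H$ — equivalently, every set of points containing no line — is disjoint from some line. Suppose not, so $I$ meets every line. Since $I$ contains no line, every line meets $I$ in at most two points, and distinct pairs of points of $I$ lie on distinct lines; thus the $\binom{|I|}{2}$ pairs inject into the $7$ lines, forcing $|I| \le 4$. Now I double-count the incident point–line pairs with point in $I$: on one hand this count is $\sum_{p \in I} 3 = 3|I|$, and on the other hand, since exactly $\binom{|I|}{2}$ lines meet $I$ in two points and (as $I$ meets all $7$ lines) the remaining $7 - \binom{|I|}{2}$ lines meet it in exactly one, it equals $2\binom{|I|}{2} + \bigl(7 - \binom{|I|}{2}\bigr) = 7 + \binom{|I|}{2}$. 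Hence $3|I| = 7 + \binom{|I|}{2}$, which has no solution with $|I| \in \{0,1,2,3,4\}$ — a contradiction. So every independent set of $H$ misses a line.

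It follows that $K_{7,7}$ is not $(3,3)$-choosable, which gives $\xi_m(3) \le 7(\log 7)^2/27$, and it remains only to note $7(\log 7)^2 < 27(\log 3)^2$ (numerically the two sides are about $26.5$ and $32.6$). I expect no real obstacle once one thinks to use the Fano plane: the incidence count is a two-line computation and the final numerical inequality is immediate; the only genuine content is recognizing that a Steiner/projective structure is more efficient here than any complete-multipartite hypergraph. The same construction with $\mathrm{PG}(2,q)$ would analogously bound $\xi_m(q+1)$, which is presumably how the asymptotics of $\xi'_m$ enter later in the section.
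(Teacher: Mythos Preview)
Your proof is correct and follows the same route as the paper: both establish that $K_{7,7}$ is not $(3,3)$-choosable and then compute $\xi_m(3) \le 7(\log 7)^2/27 < (\log 3)^2$. The only difference is that the paper cites Erd\H{o}s--Rubin--Taylor for the non-choosability of $K_{7,7}$, whereas you supply a self-contained proof of that fact via the Fano plane and \cref{lem:colorgraph}; this is exactly the classical construction behind the cited result, so the content is the same.
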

\begin{proof}
Erd\H{o}s-Rubin-Taylor \cite{Erdos1980} showed that $K_{7, 7}$ is not $(3, 3)$-choosable. So
\[\xi_m(3) \leq \frac{7 \log(7)^2}{3^3} < \log(3)^2.\qedhere\]
\end{proof}

We even have the following result:
\begin{prop} \label{prop:compositebad}
If $k_A$ is composite, then $\xi_m(k_A) < (\log k_A)^{k_A-1}$.
\end{prop}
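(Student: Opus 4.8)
The plan is to exploit multiplicativity of the construction in \cref{prop:counterexamples} together with the corresponding statement for the color graph in \cref{lem:colorgraph}. Write $k = k_1 k_2$ with $k_1, k_2 > 1$. The idea is that an unchoosable configuration witnessing a good bound on $\xi_m(k_1)$ can be ``composed'' with one witnessing a bound on $\xi_m(k_2)$ to produce an unchoosable configuration for $k$ whose $\xi$ value is strictly below $(\log k)^{k-1}$. Concretely, I would take a $k_1$-uniform hypergraph $H_1$ on a color set $C_1$ with a family $\calf_1$ of $k_B^{(1)}$-sets realizing $K_{\Delta_B^{(1)}, \Delta_A^{(1)}}$ not $(k_1, k_B^{(1)})$-choosable, and similarly $(H_2, \calf_2)$ on $C_2$ for $k_2$; then build the product configuration on color set $C_1 \times C_2$ whose hyperedges are products of a hyperedge of $H_1$ with a hyperedge of $H_2$ (a $k_1 k_2 = k$-uniform hypergraph) and whose family consists of products of an element of $\calf_1$ with an element of $\calf_2$. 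One checks via \cref{lem:colorgraph} that this is unchoosable for $k$, with $\Delta_B = \Delta_B^{(1)} \Delta_B^{(2)}$, $\Delta_A = \Delta_A^{(1)} \Delta_A^{(2)}$, and $k_B = k_B^{(1)} k_B^{(2)}$.

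Next I would run the numbers. Using the estimate $\xi_m(k_i) \le (\log k_i)^{k_i - 1}$ from \cref{prop:ximbounds} (equivalently \cref{cor:simcount} with $\Delta_A = k_i^{k_B^{(i)}}$, $\Delta_B = k_B^{(i)}$), the product configuration has
\[
\xi = \frac{\Delta_B \log(\Delta_A)^{k-1}}{k_B^{k}} = \frac{\Delta_B^{(1)} \Delta_B^{(2)} \bigl(\log(\Delta_A^{(1)}) + \log(\Delta_A^{(2)})\bigr)^{k-1}}{(k_B^{(1)} k_B^{(2)})^{k}}.
\]
Choosing $\Delta_A^{(i)} = k_i^{k_B^{(i)}}$ so that $\log \Delta_A^{(i)} = k_B^{(i)} \log k_i$, and taking $k_B^{(1)} = k_B^{(2)} = m$ for a common parameter $m$, this becomes
\[
\xi = \frac{m^2 \cdot m^{k-1} (\log k_1 + \log k_2)^{k-1}}{m^{2k}} = \frac{(\log k_1 + \log k_2)^{k-1}}{m^{k-2}} = \frac{(\log k)^{k-1}}{m^{k-2}},
\]
since $\log k_1 + \log k_2 = \log k$. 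For any $k$ composite with $k_1, k_2 \ge 2$ and any $m \ge 2$ (so $k \ge 4$, hence $k - 2 \ge 1$), this is strictly less than $(\log k)^{k-1}$, giving the claim. One must also verify the side constraints $\Delta_A \ge k$ and $\Delta_B \ge k_B$, which hold for $m$ large enough, and that $\xi_m(k)$ is genuinely an infimum over such configurations — but taking $m$ to be a fixed small value such as $2$ already suffices.

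The main obstacle is establishing the product construction rigorously: that a hyperedge-product of two unchoosable color-graph configurations is again unchoosable. The key point to check is that an independent set $I$ in the product hypergraph $H_1 \boxtimes H_2$ which meets every product set $F_1 \times F_2$ must, after projecting appropriately, yield an independent set in $H_1$ meeting every $F_1 \in \calf_1$ — or symmetrically for $H_2$ — contradicting unchoosability of one of the factors. This requires unwinding exactly what ``independent'' means in the product hypergraph (a set containing no product hyperedge) and how independence interacts with the projections; this is the combinatorial heart of the argument, and everything else is the routine arithmetic above. An alternative, possibly cleaner, route is to phrase the composition directly at the level of list assignments as in \cref{prop:counterexamples}: refine each of the $k$ ``blocks'' of colors used for $A$ into sub-blocks indexed by the second construction, which sidesteps the hypergraph-product formalism entirely; I would likely present it this way.
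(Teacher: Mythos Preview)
Your product construction does not work, and in fact cannot work: if it did, your own computation (after fixing a small arithmetic slip --- the denominator should be $m^{k-1}$, not $m^{k-2}$) would give
\[
\xi_m(k) \le \frac{(\log k)^{k-1}}{m^{k-1}}
\]
for every $m$, forcing $\xi_m(k) = 0$. But \cref{prop:ximbounds} gives $\xi_m(k) \ge \alpha(k) > 0$, a contradiction. So the ``combinatorial heart'' you flagged is exactly where the argument breaks.

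Concretely, here is a small counterexample. Take $k_1 = k_2 = 2$, with $H_1$ the graph on $\{a,b,c,d\}$ with edges $\{a,b\}$ and $\{c,d\}$ and $\calf_1 = \{\{a,c\},\{a,d\},\{b,c\},\{b,d\}\}$, and $H_2$ the single edge $\{x,y\}$ with $\calf_2 = \{\{x\},\{y\}\}$. In the product $4$-uniform hypergraph on $\{a,b,c,d\}\times\{x,y\}$, the set
\[
I = \{(a,x),(b,x),(c,y),(d,y)\}
\]
is independent (it contains neither $\{a,b\}\times\{x,y\}$ nor $\{c,d\}\times\{x,y\}$) yet meets every product set $F_1 \times F_2$. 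The point is that independence in the edge-product hypergraph does \emph{not} force either fiberwise projection $I_x$ or $I_y$ to be independent in $H_1$; the set $I$ can ``diagonalize'' across the two factors. Your suggested alternative of refining blocks in the style of \cref{prop:counterexamples} is, as written, the same construction at the list-assignment level and fails for the same reason.

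The paper's argument is quite different. It does not multiply two configurations; instead it takes the single construction from \cref{cor:simcount} with $k_A = k_B = k$, namely $K_{a^k r,\, k^r}$ with $k = ar$ and $a \ge r > 1$, observes that $a^k r > k^r$, and then uses the \emph{symmetry} available when $k_A = k_B$ to swap the roles of the two parts. This puts the smaller number $k^r$ in the $\Delta_B$ slot, and one is left to verify the analytic inequality
\[
\frac{k^r\bigl(\log(a^k r)\bigr)^{k-1}}{k^k} < (\log k)^{k-1},
\]
which the paper handles via a fixed-point argument for an exponential map (\cref{lem:exppts}). The key idea you are missing is this swap: it is only available because one can arrange $k_A = k_B$, and it is what produces a genuinely smaller $\xi$ without any product or composition of configurations.
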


The proof of \cref{prop:compositebad} requires a quick fact about exponential functions:
\begin{lem} \label{lem:exppts}
Let $a$ and $b$ be positive and let $g(x) = be^{-ax}$. Then there are at most three reals $x$ with $g(g(x)) = x$.
\end{lem}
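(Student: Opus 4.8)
The plan is to reduce counting the solutions of $g(g(x)) = x$ to a single monotonicity statement about the derivative of $h := g \circ g$, and then apply Rolle's theorem. The starting point is that $g(x) = be^{-ax}$ satisfies the first-order relation $g'(x) = -a\,g(x)$. Applying the chain rule and this relation twice yields the clean identity
\[
h'(x) = g'(g(x))\,g'(x) = \bigl(-a\,h(x)\bigr)\bigl(-a\,g(x)\bigr) = a^2 g(x) h(x).
\]
Since $g, h > 0$ everywhere, this already shows $h' > 0$, so $h$ is strictly increasing; but its real use is that it lets us track how $h'$ varies.

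Next I would examine $\log h'(x) = 2\log a + \log g(x) + \log h(x)$. Differentiating and using $g'/g = -a$ together with $h'/h = a^2 g$ (which is just the identity above divided by $h$), we get
\[
\frac{d}{dx}\log h'(x) = -a + a^2 g(x) = a\bigl(a\,g(x) - 1\bigr).
\]
Because $g$ is a strictly decreasing bijection onto $(0,\infty)$, the quantity $a\,g(x) - 1$ is strictly decreasing and vanishes at exactly one point $x_0$, being positive for $x < x_0$ and negative for $x > x_0$. Hence $\log h'$, and therefore $h'$ itself, is strictly increasing on $(-\infty, x_0]$ and strictly decreasing on $[x_0, \infty)$; that is, $h'$ is strictly unimodal.

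Finally, set $\phi(x) = h(x) - x$, whose zeros are precisely the $x$ with $g(g(x)) = x$. Since $h'$ is unimodal, the equation $h'(x) = 1$, equivalently $\phi'(x) = 0$, has at most two solutions — at most one on each side of $x_0$. If $\phi$ had four or more zeros, Rolle's theorem would force $\phi'$ to have at least three zeros, a contradiction; so $\phi$ has at most three zeros, which is the claim. I do not anticipate a genuine obstacle here: the only care needed is the trivial bookkeeping between the case $\max_x h'(x) \le 1$ (where $\phi$ is strictly decreasing, with a unique zero) and the case $\max_x h'(x) > 1$, but in all cases the count is at most three. One can also observe the bound is sharp: for $a$ large the fixed point of $g$ becomes repelling and a true $2$-cycle of $g$ appears, giving exactly three solutions of $g(g(x)) = x$.
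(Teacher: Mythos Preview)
Your proof is correct and takes a genuinely different, more elementary route than the paper's. The paper invokes the Schwarzian derivative: it computes $(Sg)(x) = -a^2/2 < 0$, uses the composition law $S(g\circ g) = (Sg\circ g)\,(g')^2 + Sg$ to conclude $Sh < 0$, and then argues that four fixed points would (via the mean value theorem) force three points with $h'=1$, at one of which a local minimum of $h'$ would give $h''=0$ and $h'''\ge 0$, contradicting $Sh<0$.

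You instead exploit the functional relation $g' = -ag$ to get the closed form $h' = a^2\,g\,h$, and then differentiate $\log h'$ to obtain $(\log h')' = a(ag(x)-1)$, which changes sign exactly once. This shows $h'$ is strictly unimodal, so $h'(x)=1$ has at most two solutions, and Rolle finishes. Your argument is more direct and avoids any outside machinery for this specific $g$; the paper's approach, by contrast, is the one that generalizes, since negative Schwarzian derivative is a standard hypothesis in one-dimensional dynamics that controls the number of attracting periodic orbits and would apply verbatim to any $g$ with $Sg<0$. Both arguments ultimately hinge on the same phenomenon---$h'$ cannot take the value $1$ three times---but reach it by different means.
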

This is proven in \cref{append:ineq}.

\begin{proof}[Proof of \cref{prop:compositebad}]
Write $k_A = ar$ with $a \geq r > 1$. By \cref{cor:simcount}, $K_{\Delta_B, \Delta_A}$ is not $(k_A,k_A)$-choosable when $\Delta_B = a^{k_A} r$ and $\Delta_A = k_A^r$. Observe that
\[\Delta_B \geq \sqrt{k_A}^{k_A} \sqrt{k_A} = k_A^{(k_A+1)/2} > k_A^{\sqrt{k_A}} \geq \Delta_A.\]
By symmetry $K_{\Delta_A,\Delta_B}$ is not $(k_A,k_A)$-choosable; we claim that
\[\frac{\Delta_A \log(\Delta_B)^{k_A-1}}{k_A^{k_A}} < \log(k_A)^{k_A-1}.\]
From here it will be useful to work with $\delta_A = \log \Delta_A$ and $\delta_B = \log \Delta_B$. Letting $\alpha = \frac{1}{k_A-1}$ and $\beta = k_A^{k_A/(k_A-1)} \log(k_A)$, we have $\delta_A = \beta e^{-\alpha \delta_B}$, while we wish to prove that $\delta_B < \beta e^{-\alpha \delta_A}$. Thus, letting $g(x) = \beta e^{-\alpha x}$, we want to show that $\delta_B < g(g(\delta_B))$.

At this point, we observe that that $g$ swaps $\log k_A$ and $k_A \log k_A$. Since $g$ is monotonically decreasing, it has a fixed point $x_0$ in between these two numbers. We claim that $x_0 < \delta_B < k_A \log k_A$. To show the first inequality, we use the fact $g(\delta_B) - \delta_B = \delta_A - \delta_B < 0 = g(x_0) - x_0$, which suffices as $x \mapsto g(x) - x$ is a decreasing function. The second inequality is equivalent to $a^{k_A} r < (ar)^{k_A}$, which is clear.

The function $g \circ g$ has fixed points at $\log k_A$, $x_0$, and $k_A \log k_A$. By \cref{lem:exppts} it cannot have any more fixed points, so $g(g(x)) - x \neq 0$ for all $x_0 < x < k_A \log k_A$. Thus, to show that $g(g(\delta_B)) - \delta_B > 0$ it suffices to show that there exists some $x_0 < \delta < k_A\log k_A$ with $g(g(\delta)) - \delta > 0$.

To do this, we will show that the derivative of $g\circ g$ at $k_A \log k_A$ is strictly less than $1$. To start, observe that $g'(x) = -\alpha g(x)$. Therefore, the derivative of $g(g(x))$ at $k_A\log k_A$ is, by the chain rule,
\[g'(\log k_A) g'(k_A\log k_A) = \alpha^2 (\log k_A)(k_A\log k_A) = \frac{k_A\log (k_A)^2}{(k_A-1)^2}.\]
We wish to show that $k_A \log(k_A)^2 < (k_A-1)^2$. Since equality holds at $k_A = 1$, we only need to show that the derivative of the left hand side is less than the derivative of the right hand side for all $k_A > 1$, i.e., $\log(k_A)^2 + 2 \log(k_A) < 2(k_A-1)$. Equality again holds at $k_A = 1$, so by taking the derivative again we need to show
\[\frac{2(\log k_A + 1)}{k_A} < 2 \iff k_A - 1> \log k_A.\]
This is well-known.
\end{proof}

We have now shown that $\xi_m(k_A) < (\log k_A)^{k_A-1}$ for a wide variety of $k_A$: specifically, if any counterexamples greater than $2$ exist, they must be primes at least $5$. Especially given that \cref{thm:limxim} implies that only finitely many counterexamples do exist, it is attractive to conjecture that in fact none exist. However, constructions for small primes at least $5$ remain elusive.

\subsection{\texorpdfstring{$\xi^*_m(k_A)$}{ξ*\_m(k\_A)} and growth rates}
\begin{defn}
Let $\xi^*_m(k_A)$ be the infimum value of $\Delta_B \log(\Delta_A)^{k_A}/k_B^{k_A}$ over all $\Delta_A,\Delta_B,k_B$ such that $K_{\Delta_B,\Delta_A}$ is not $(k_A,k_B)$-choosable.
\end{defn}

In fact, since for such $\Delta_A,\Delta_B,k_B$ we have \[\Delta_B \log(\Delta_A)^{k_A}/k_B^{k_A} \geq \xi_m(k_A) \log(\Delta_A),\] we have the following result:
\begin{prop} \label{prop:ximxim}
$\xi_m(k_A)\log(k_A) \leq \xi^*_m(k_A)$.
\end{prop}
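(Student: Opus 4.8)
The plan is to unwind the two infima and compare them factor by factor. Fix any admissible triple $(\Delta_A,\Delta_B,k_B)$, i.e.\ one for which $K_{\Delta_B,\Delta_A}$ is not $(k_A,k_B)$-choosable. I would first rewrite the quantity defining $\xi'_m(k_A)$ by pulling out a single logarithmic factor:
\[
\frac{\Delta_B \log(\Delta_A)^{k_A}}{k_B^{k_A}} \;=\; \log(\Delta_A)\cdot\frac{\Delta_B\log(\Delta_A)^{k_A-1}}{k_B^{k_A}} \;=\; \log(\Delta_A)\cdot\xi(\Delta_A,\Delta_B,k_A,k_B).
\]
The second factor is at least $\xi_m(k_A)$ by the definition of $\xi_m$, precisely because the triple $(\Delta_A,\Delta_B,k_B)$ witnesses non-choosability. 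For the first factor I would invoke \cref{prop:trivchoosing}: non-choosability of $K_{\Delta_B,\Delta_A}$ forces $\Delta_A \geq k_A$, hence $\log(\Delta_A)\geq\log(k_A)\geq 0$.

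With both factors nonnegative, multiplying the two lower bounds gives $\Delta_B\log(\Delta_A)^{k_A}/k_B^{k_A}\geq\xi_m(k_A)\log(k_A)$ for every admissible triple, and taking the infimum over all admissible triples yields $\xi'_m(k_A)\geq\xi_m(k_A)\log(k_A)$, which is the claim. (This is exactly the inequality telegraphed in the sentence preceding the statement: $\Delta_B\log(\Delta_A)^{k_A}/k_B^{k_A}\geq\xi_m(k_A)\log(\Delta_A)$, together with $\log(\Delta_A)\geq\log(k_A)$.)

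The only point needing a word of care is the degenerate case $k_A=1$, where $\log(k_A)=0$ and the asserted inequality degenerates to $0\leq\xi'_m(1)$, which holds trivially; for $k_A\geq 2$ we have $\Delta_A\geq k_A\geq 2$, so every logarithm appearing is strictly positive and the multiplication of inequalities is unproblematic. I do not expect any genuine obstacle here — the content is just the bookkeeping observation that the $\xi'_m$-quantity is the $\xi$-quantity times one extra $\log(\Delta_A)$, and that this extra factor is bounded below by $\log(k_A)$ whenever the graph is non-choosable.
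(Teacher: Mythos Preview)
Your proof is correct and matches the paper's argument exactly: the paper simply observes that $\Delta_B\log(\Delta_A)^{k_A}/k_B^{k_A}\geq\xi_m(k_A)\log(\Delta_A)$ for every non-choosable triple, and implicitly uses $\Delta_A\geq k_A$ from \cref{prop:trivchoosing} to conclude. Your write-up is in fact more careful than the paper's one-line justification, including the handling of $k_A=1$.
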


We will now prove bounds on the growth rates of $\xi_m(k_A)$ and $\xi^*_m(k_A)$. First, by \cref{prop:ximbounds}, $\xi_m(k_A) \geq \alpha(k_A)$. Here, we determine the asymptotic behavior of $\alpha(k_A)$.

\begin{prop} \label{prop:alphabound}
$\lim_{k_A \to \infty} \log(\alpha(k_A))/k_A = 0$.
\end{prop}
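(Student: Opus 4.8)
The plan is to squeeze $\log(\alpha(k))/k$ between two sequences tending to $0$. For the upper bound I would first record that $f(u) = 1 - u + u\log u$ takes values in $[0,1]$ on $[0,1]$: indeed $f'(u) = \log u \le 0$ there, while $\lim_{u\to 0^+} f(u) = 1$ and $f(1) = 0$, so $f$ decreases monotonically from $1$ to $0$. Hence $u f(u)^{k-1} \le u \le 1$ for every $u \in [0,1]$, giving $\alpha(k) \le 1$ and so $\log(\alpha(k))/k \le 0$ for all $k > 1$.

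For the lower bound it suffices to plug a single well-chosen test value into the definition of $\alpha(k)$ as a maximum. I would use $u = 1/k$, for which $f(1/k) = 1 - \tfrac{1+\log k}{k}$. Once $k$ is large enough that $\tfrac{1+\log k}{k} \le \tfrac12$, the elementary inequality $\log(1-x) \ge -2x$ on $[0,\tfrac12]$ gives $(k-1)\log f(1/k) \ge -2(1+\log k)$, hence $f(1/k)^{k-1} \ge e^{-2}k^{-2}$, and therefore $\alpha(k) \ge \tfrac1k\, e^{-2}k^{-2} = e^{-2}k^{-3}$. This yields $\log(\alpha(k))/k \ge \tfrac{-2 - 3\log k}{k}$, which tends to $0$.

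Applying the squeeze theorem then finishes the proof. I do not anticipate a genuine obstacle here; the one thing to get right is the order of magnitude of the test point — a constant $u$ makes $u f(u)^{k-1}$ decay exponentially in $k$ (too weak), whereas taking $u$ of order $1/k$ makes $f(u)^{k-1}$ decay only polynomially in $k$, which is precisely what is needed. Any sequence $u_k \to 0$ for which $k\, u_k \log(1/u_k)$ grows subexponentially would serve equally well; $u_k = 1/k$ is simply the most convenient.
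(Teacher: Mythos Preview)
Your proof is correct and follows essentially the same approach as the paper: both use $\alpha(k)\le 1$ for the upper bound and the test point $u=1/k$ together with $\log(1-x)\ge -2x$ for the lower bound, arriving at the identical estimate $\alpha(k)\ge e^{-2}k^{-3}$. Your version is slightly more explicit in verifying that $f$ maps $[0,1]$ into $[0,1]$, but the argument is otherwise the same.
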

\begin{proof}
To prove an upper bound, first note that since $\alpha(k_A) \leq 1$, we have
\[\limsup_{k \to \infty} \frac{\log(\alpha(k_A))}{k_A} \leq 0.\]

Now, set $u = 1/k_A$. Since $\log(1-x) \geq -2x$ for sufficiently small positive $x$, for small $u$ we have
\[\log(1 - u + u \log u) \geq -\frac{2 \log(ek_A)}{k_A}.\]
Therefore, for sufficiently large $k_A$,
\[\alpha(k_A) \geq u(1 - u + u \log u)^{k_A-1} \geq \frac{(1 - u + u \log u)^{k_A}}{k_A} \geq \frac{1}{e^2k_A^3},\]
which shows $\liminf_{k_A \to \infty} \log(\alpha(k_A))/k_A \geq 0$.
\end{proof}
\begin{prop}\label{prop:loglogbound}
$\limsup_{k_A \to \infty} \log(\xi^*_m(k_A))/k_A \leq \log 2 + \log \log 2$.
\end{prop}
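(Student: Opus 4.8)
The idea is that the infimum defining $\xi'_m(k)$ already includes the diagonal case $k_B = k$, in which $(k,k)$-choosability is nothing but ordinary $k$-choosability, and here one can invoke the Erdős–Rubin–Taylor estimate $\ch(K_{n,n}) \sim \log_2 n$ from \cite{Erdos1980} to produce small uncolorable graphs.

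First I would fix $\eps > 0$. Since $\ch(K_{n,n})/\log_2 n \to 1$, there is an $N$ with $\ch(K_{n,n}) > (\log_2 n)/(1+\eps)$ whenever $n \geq N$. For each $k$ set $n(k) := \ceil{2^{(1+\eps)k}}$, so $\log_2 n(k) \geq (1+\eps)k$; then for all sufficiently large $k$ we have $n(k) \geq N$ and $\ch(K_{n(k),n(k)}) > (\log_2 n(k))/(1+\eps) \geq k$, i.e.\ $K_{n(k),n(k)}$ is not $k$-choosable. As a $(k,k)$-list assignment is the same thing as a $k$-list assignment, $K_{n(k),n(k)}$ is not $(k,k)$-choosable, so taking $\Delta_A = \Delta_B = n(k)$ and $k_B = k$ in the definition of $\xi'_m$ gives $\xi'_m(k) \leq n(k)\log(n(k))^k/k^k$ for all large $k$.

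It remains to estimate the right-hand side. From $n(k) \leq 2\cdot 2^{(1+\eps)k}$ and $\log n(k) \leq ((1+\eps)k+1)\log 2$, taking logarithms and dividing by $k$ I get
\[
\frac{\log \xi'_m(k)}{k} \;\leq\; \frac{\log 2}{k} + (1+\eps)\log 2 + \log\log 2 + \log\!\left(1+\eps+\tfrac1k\right),
\]
so that $\limsup_{k\to\infty}\log(\xi'_m(k))/k \leq (1+\eps)\log 2 + \log(1+\eps) + \log\log 2$. Since $\eps>0$ was arbitrary, letting $\eps\to 0$ yields the claimed bound $\limsup_{k\to\infty}\log(\xi'_m(k))/k \leq \log 2 + \log\log 2$.

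I do not expect a serious obstacle here: the argument is a two-line reduction plus bookkeeping. The only points requiring care are that one must use the lower-bound half of $\ch(K_{n,n})\sim\log_2 n$ (so that $K_{n,n}$ is already uncolorable for $n$ barely above $2^k$) and that the various slack factors must be tracked so they contribute only $e^{o(k)}$ and vanish in the limit; if one prefers, quoting the sharper form $\ch(K_{n,n}) = \log_2 n - (2+o(1))\log_2\log_2 n$ and taking $n(k)=\Theta(2^k k^2)$ removes the need for $\eps$ altogether and even gives $\log(\xi'_m(k))/k = \log 2 + \log\log 2 + O(\log k/k)$.
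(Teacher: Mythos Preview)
Your proposal is correct and takes essentially the same approach as the paper: both plug the Erd\H{o}s--Rubin--Taylor bound on $\ch(K_{n,n})$ into the diagonal case $k_B=k$ of the definition of $\xi'_m$ and then compute the resulting asymptotic. The only cosmetic difference is that the paper quotes the explicit form ``$K_{\Delta,\Delta}$ is not $(k,k)$-choosable for $\Delta=k^2 2^{k+1}$'' directly (avoiding the $\eps$-argument), which is precisely the alternative you sketch in your final paragraph.
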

\begin{proof}
Erd\H{o}s-Rubin-Taylor \cite{Erdos1980} showed that $K_{\Delta,\Delta}$ is not $(k_A,k_A)$-choosable if $\Delta = k_A^2 2^{k_A+1}$. Therefore,
\[\xi^*_m(k_A) \leq \frac{k_A^2 2^{k_A+1} ((k_A+1)\log 2 + 2\log k_A)^{k_A}}{k_A^{k_A}} = k_A^22^{k_A+1} \paren*{\log 2 + \frac{\log 2 + 2\log k_A}{k_A}}^{k_A}.\]
The logarithm of this is asymptotic to $(\log 2 + \log \log 2) k_A$, as desired.
\end{proof}

\subsection{Asymptotic existence}
We start with a quick lemma using graph amplification:
\begin{lem} \label{lem:23amplify}
Suppose $K_{\Delta_B,\Delta_A}$ is not $(k_A,k_B)$-choosable. Let $a$ and $b$ be nonnegative integers and let $r = 2^a3^b$. Then $K_{(6\Delta_B)^r/6, (6\Delta_A)^r/6}$ is not $(rk_A,rk_B)$-choosable.
\end{lem}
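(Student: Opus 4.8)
The plan is to induct on $a+b$, proving the slightly stronger statement that for every $r=2^a3^b$ there are integers $M\le(6\Delta_B)^r/6$ and $N\le(6\Delta_A)^r/6$ with $K_{M,N}$ not $(rk_A,rk_B)$-choosable; \cref{lem:23amplify} then follows at once by monotonicity, since decreasing $\Delta_A$ or $\Delta_B$ cannot make a complete bipartite graph choosable. The base case $r=1$ is the hypothesis of the lemma, with $M=\Delta_B$ and $N=\Delta_A$. The inductive step combines two operations: the blowup of \cref{cor:completeamp}, which multiplies the right list size, and a new \emph{self-product}, which multiplies the left list size.

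For the self-product I pass through the hypergraph description of \cref{lem:colorgraph}. If $K_{M,N}$ is not $(p,q)$-choosable, fix a $p$-uniform hypergraph $H$ with $M$ edges and a family $\calf$ of $N$ distinct $q$-subsets of $V(H)$ (distinctness is available via the maximality reduction in the proof of \cref{lem:colorgraph}) such that every independent set of $H$ is disjoint from some member of $\calf$. For an integer $s\ge1$, take $s$ vertex-disjoint copies of the pair $(H,\calf)$, on vertex sets $V_1,\dots,V_s$, and let $H^{\times s}$ be the $sp$-uniform hypergraph on $V_1\sqcup\dots\sqcup V_s$ whose edges are the sets $e_1\cup\dots\cup e_s$, with $e_j$ an edge of the $j$-th copy of $H$; there are exactly $M^s$ of these, as the decomposition into the parts $V_j$ is unique. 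The crucial observation is that a set $I$ is independent in $H^{\times s}$ precisely when $I\cap V_j$ is independent in the $j$-th copy of $H$ for \emph{some} $j$; hence the family of all $sN$ $q$-sets coming from the $s$ copies of $\calf$ has the property that every independent set of $H^{\times s}$ is disjoint from one of them. By \cref{lem:colorgraph}, $K_{M^s,sN}$ is not $(sp,q)$-choosable.

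Composing the self-product at $s=2$ with a blowup by $2$ shows that if $K_{M,N}$ is not $(p,q)$-choosable then $K_{2M^2,4N^2}$ is not $(2p,2q)$-choosable, and composing at $s=3$ with a blowup by $3$ gives $K_{3M^3,27N^3}$ not $(3p,3q)$-choosable. Feeding in the inductive data $M\le(6\Delta_B)^r/6$, $N\le(6\Delta_A)^r/6$, $p=rk_A$, $q=rk_B$, the doubling step produces a witness for $r'=2r$ with
\[2M^2\le\frac{2(6\Delta_B)^{2r}}{36}=\frac{(6\Delta_B)^{r'}}{18}\le\frac{(6\Delta_B)^{r'}}{6},\qquad 4N^2\le\frac{(6\Delta_A)^{r'}}{9}\le\frac{(6\Delta_A)^{r'}}{6},\]
and the tripling step produces a witness for $r'=3r$ with
\[3M^3\le\frac{(6\Delta_B)^{r'}}{72}\le\frac{(6\Delta_B)^{r'}}{6},\qquad 27N^3\le\frac{(6\Delta_A)^{r'}}{8}\le\frac{(6\Delta_A)^{r'}}{6}.\]
So the strengthened hypothesis is preserved under each step, and carrying out $a$ doublings and $b$ triplings, in any order, reaches $r=2^a3^b$.

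The genuinely delicate point is exactly this bookkeeping of multiplicative constants. A single ``$r$-fold self-product followed by a blowup by $r$'' would instead yield $K_{r\Delta_B^r,\,r^r\Delta_A^r}$, and $r^r\le 6^{r-1}$ already fails at $r=4$; the slack needed to keep both coordinates below $(6\Delta_B)^r/6$ and $(6\Delta_A)^r/6$ only builds up through the iterated size-$2$ and size-$3$ products. The constant $6$ is calibrated precisely so that the doubling inequalities $2\le6$, $4\le6$ and the tripling inequalities $3\le6^2$, $27\le6^2$ all hold — and $6$ is the least integer for which they do — which is what makes the induction close.
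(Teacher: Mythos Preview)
Your proof is correct and follows the same strategy as the paper: reduce to the elementary steps $r=2$ and $r=3$ and chain them, each step being a blowup in one direction composed with a blowup in the other, with the constant $6$ chosen so that the resulting bounds $(2,4)$ and $(3,27)$ fit inside $(6,6)$ and $(36,36)$ respectively. The one substantive difference is that where the paper simply invokes \cref{cor:completeamp} with the roles of $A$ and $B$ swapped (yielding $K_{M^s,sN}$ not $(sp,q)$-choosable directly), you rederive this ``reversed blowup'' from scratch via the hypergraph characterization of \cref{lem:colorgraph}; your self-product $H^{\times s}$ and the observation about its independent sets are correct and give exactly the same intermediate statement, so this is an alternative justification rather than a different route.
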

\begin{proof}
Observe that it suffices to show the result for $r = 2$ and $r=3$, after which the result for general $r$ will follow by chaining steps that multiply $r$ by $2$ or $3$.

If $r = 2$, we want to show that $K_{6\Delta_B^2, 6\Delta_A^2}$ is not $(2k_A,2k_B)$-choosable. To do this, note that $(K_{\Delta_B,\Delta_A})^{\curlyvee 2} = K_{2\Delta_B, \Delta_A^2}$ is not $(k_A,2k_B)$-choosable. Applying another $2$-fold blowup with the roles of $A$ and $B$ reversed implies that $K_{4\Delta_B^2, 2\Delta_A^2}$ is not $(2k_A,2k_B)$-choosable, as desired.

Similarly, we can show that $K_{27\Delta_B^3, 3\Delta_A^3}$ is not $(3k_A, 3k_B)$-choosable. Since $27 < 6^2$, we are done.
\end{proof}

\begin{proof}[Proof of \cref{thm:limxim}]
First observe that by applying \cref{prop:ximxim} and \cref{prop:alphabound}, the quantity $\liminf_{k_A\to\infty} \log (\xi^*_m(k_A))/k_A$ exists. We will find a function $f(x, k_A)$ with $\lim_{k_A\to\infty} f(x, k_A) = x$ such that if \[\frac{\log (\xi^*_m(k_0))}{k_0} < x,\] then we have \[\limsup_{k_A\to\infty} \frac{\log (\xi^*_m(k_A))}{k_A} \leq f(x, k_0).\] Thus, for all $x > \liminf_{k_A\to\infty} \log (\xi^*_m(k_A))/k_A$, we have $\limsup_{k_A\to\infty} \log (\xi^*_m(k_A))/k_A \leq x$, which will finish the proof.

For a real number $s$, let $r(s)$ be the smallest number of the form $2^a3^b$ that is at least $s$, where $a$ and $b$ are nonnegative integers. Since $\log(3)/\log(2)$ is irrational, $\lim_{s\to\infty} r(s)/s = 1$.

Now take some $x$ and $k_0$ such that $\log (\xi^*_m(k_0))/k_0 < x$. Thus there exist $\Delta_A,\Delta_B,k_B$ such that $K_{\Delta_B, \Delta_A}$ is not $(k_0, k_B)$-choosable and
\[\frac{\Delta_B \log(\Delta_A)^{k_0}}{k_B^{k_0}} < e^{xk_0} \iff \frac{\Delta_B^{1/k_0} \log(\Delta_A)}{k_B} < e^x.\]
For a positive integer $k_A$, note that by \cref{lem:23amplify}, the graph $K_{(6\Delta_B)^{r(k_A/k_0)}, (6\Delta_A)^{r(k_A/k_0)}}$ is not $(k_0 r(k_A/k_0), k_B r(k_A/k_0))$-choosable and therefore not $(k_A, k_B r(k_A/k_0))$-choosable. Therefore
\[\frac{\log(\xi^*_m(k_A))}{k_A} \leq \log \frac{(6\Delta_B)^{r(k_A/k_0)/k_A} r(k_A/k_0) \log(6\Delta_A)}{k_B r(k_A/k_0)},\]
where the right-hand side approaches
\[\log \frac{(6\Delta_B)^{1/k_0} \log(6\Delta_A)}{k_B} < x + \log\paren*{ 6^{1/k_0} \frac{\log(6k_0)}{\log(k_0)}}\]
as $k_A \to \infty$ (we used the fact that $\Delta_A \geq k_0$). Therefore
\[f(x, k_A) = x + \log\paren*{ 6^{1/k_A} \frac{\log(6k_A)}{\log(k_A)}}\]
is the desired function, completing the proof.
\end{proof}

By applying \cref{prop:ximxim}, \cref{prop:alphabound}, and \cref{prop:loglogbound}, we know that the limit $\lim_{k_A\to\infty} \log(\xi^*_m(k_A))/k_A$ is at least $0$ but at most $\log 2 + \log \log 2 \approx 0.3266$. Despite appearances, the question of whether the limit $\lim_{k_A\to\infty} \log(\xi^*_m(k_A))/k_A$ is zero or positive is relatively unimportant compared to determining the actual value of the limit. To see this, observe that changing the base of the logarithm used to define $\xi^*_m(k_A)$ will multiply values by an exponential in $k_A$, changing the value of the limit by a constant.

It is natural to make the following conjecture:
\begin{conj}
$\lim_{k_A\to\infty} \log(\xi_m(k_A))/k_A$ exists and is equal to $\lim_{k_A\to\infty} \log(\xi^*_m(k_A))/k_A$.
\end{conj}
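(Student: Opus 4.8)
The plan is to leverage \cref{thm:limxim}: write $L := \lim_{k \to \infty} \log(\xi'_m(k))/k$, which exists, and aim to show $\log(\xi_m(k))/k \to L$. One inequality is free: since $\xi_m(k)\log k \le \xi'_m(k)$ by \cref{prop:ximxim}, we get $\log(\xi_m(k))/k \le \log(\xi'_m(k))/k - \log(\log k)/k$, so $\limsup_k \log(\xi_m(k))/k \le L$. All the difficulty is in the reverse bound $\liminf_k \log(\xi_m(k))/k \ge L$.

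For that, I would exploit the observation that multiplying a $\xi$-value by $\log \Delta_A$ yields exactly the quantity minimized in the definition of $\xi'_m$. Fix $\eps > 0$ and for each $k$ take a non-$(k,k_B)$-choosable $K_{\Delta_B,\Delta_A}$ with $\xi = \Delta_B\log(\Delta_A)^{k-1}/k_B^{k} \le (1+\eps)\xi_m(k)$. Then
\[
\xi'_m(k) \;\le\; \frac{\Delta_B \log(\Delta_A)^{k}}{k_B^{k}} \;=\; \xi \cdot \log \Delta_A \;\le\; (1+\eps)\,\xi_m(k)\,\log \Delta_A ,
\]
so $\tfrac1k \log \xi'_m(k) \le \tfrac1k\log\xi_m(k) + \tfrac1k\log(1+\eps) + \tfrac1k\log\log\Delta_A$. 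Thus if the near-optimal witnesses can be chosen with $\log\log\Delta_A = o(k)$, then letting $k\to\infty$ gives $L \le \liminf_k\log(\xi_m(k))/k$, finishing the proof. (Pushing this through the amplification constructions of \cref{sec:amplification}, as in the proof of \cref{thm:limxim}, is an equally viable route and hits the same bottleneck.)

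To attack $\Delta_A$, I would use \cref{lem:colorgraph}: a witness for non-choosability is a $k$-uniform hypergraph $H$ with $\Delta_B$ edges plus a family of $\Delta_A$ distinct $k_B$-subsets of $V(H)$. A standard reduction --- optimally pre-coloring those $B$-vertices whose lists contain a color missing from every $A$-list, then passing to the residual instance, which is still non-choosable, has $\xi$-value no larger (hence, as $\xi_m(k)$ is an infimum, still near-optimal), and uses only colors appearing in some $A$-list --- lets us assume $\abs{V(H)} \le k\Delta_B$, whence $\Delta_A \le \binom{k\Delta_B}{k_B} \le (k\Delta_B)^{k_B}$. Since $\Delta_A \ge k$ and $\xi_m(k) \le (\log k)^{k-1}$ by \cref{prop:ximbounds}, we then get $\Delta_B \le (1+\eps)k_B^{k}$, so $\log\log\Delta_A = O(\log k + \log k_B)$. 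Hence everything comes down to one structural claim: \emph{for each $k$, or even just along a subsequence of $k$, the value $\xi_m(k)$ is approached by non-choosable complete bipartite graphs whose $B$-list size satisfies $k_B \le 2^{o(k)}$.}

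The hard part will be precisely this claim, which is essentially equivalent to the conjecture: by the chain of inequalities above, a failure of the $\liminf$ bound would force some sequence of near-optimal witnesses to have $k_B$ --- and hence $\Delta_A$ --- growing \emph{doubly} exponentially in $k$. Every presently known near-optimal construction (the Erd\H{o}s--Rubin--Taylor graphs underlying \cref{prop:loglogbound}, and the graphs $K_{a^{k}r,k^{r}}$ of \cref{prop:compositebad}) uses $k_B = k$ and so passes this test with vast room to spare; but ruling out the doubly-exponential scenario for \emph{all} optimal witnesses appears to require a form of ``inverse amplification'' that shrinks $k_B$ at a small cost in $\xi$. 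The operations in \cref{sec:amplification} all move in the opposite direction, and no such compression is currently known --- this is where I expect the argument to stall, and likely where the real content of the conjecture lies.
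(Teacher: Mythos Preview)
This statement is a \emph{conjecture} in the paper, not a theorem; the paper offers no proof. Immediately after stating it, the paper writes: ``applying similar methods to attack this problem does not directly work, since one cannot rule out the case where the values of $\Delta_A$ witnessing a low value of $\xi_m(k)$ grow extremely quickly. Therefore, resolving this conjecture appears to require a deeper understanding of list coloring, as opposed to just graph amplification techniques.''

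Your proposal is not a proof either, and you are candid about this. What you have done is correctly reduce the problem to precisely the obstacle the paper names. The easy direction $\limsup_k \log(\xi_m(k))/k \le L$ via \cref{prop:ximxim} is fine. For the reverse, your chain of inequalities and the color-graph reduction (bounding $\Delta_A$ by $\binom{k\Delta_B}{k_B}$ once isolated colors are stripped) are valid, and they distill the barrier to the clean form ``near-optimal witnesses have $k_B \le 2^{o(k)}$.'' This is a mildly sharper restatement of the paper's ``$\Delta_A$ might grow extremely quickly,'' and your observation that amplification only moves $k_B$ upward---so some unknown ``inverse amplification'' would be needed---is exactly the paper's point that graph amplification alone cannot close the gap.

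In short: your analysis and the paper's discussion coincide. Both identify the same bottleneck, and neither resolves it; the conjecture remains open.
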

However, applying similar methods to attack this problem does not directly work, since one cannot rule out the case where the values of $\Delta_A$ witnessing a low value of $\xi_m(k)$ grow extremely quickly. Therefore, resolving this conjecture appears to require a deeper understanding of list coloring beyond graph amplification techniques.

\section{On General Bipartite Graphs} \label{sec:ext}
When $G$ is allowed to be any bipartite graph with the maximum degrees $\Delta_A$ and $\Delta_B$, the combinatorial tools required to prove the $(k_A, k_B)$-choosability of many relevant bipartite graphs are far more elusive.
\begin{defn}
Define $\xi_g(k_A)$ as the infimum value of $\xi$ over all $k_B$ and bipartite graphs $G$ with maximum degrees $\Delta_A, \Delta_B$ that are not $(k_A, k_B)$-choosable.
\end{defn}
The main difference between $\xi_g(k_A)$ and $\xi_m(k_A)$ is that, in the general bipartite case, there is no analogue to \cref{prop:2smoothupper}, meaning that it is possible that $\xi_g(k_A) = 0$. It is unknown whether this is the case for any $k_A$, but proving the contrary for any $k_A > 1$ seems to be very difficult:
\begin{prop}
Suppose $\xi_g(k_A) > 0$. Then, for any bipartite graph $G$, we have $\ch(G) = O(\Delta^{1/k_A} \log(\Delta)^{1-1/k_A})$.
\end{prop}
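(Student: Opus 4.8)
The plan is to simply unwind the definition of $\xi_g(k)$ and apply it with both list sizes taken to be (roughly) the target bound. By definition, $\xi_g(k)$ is the infimum of $\xi = \Delta_B\log(\Delta_A)^{k-1}/k_B^k$ over all bipartite graphs and all $k_B$ for which the graph is not $(k, k_B)$-choosable; hence the hypothesis $\xi_g(k) > 0$ says exactly that there is a positive constant $c = \xi_g(k)$, depending only on $k$, such that \emph{every} bipartite graph $G$ with maximum degrees $\Delta_A, \Delta_B$ is $(k, k_B)$-choosable whenever $\Delta_B\log(\Delta_A)^{k-1}/k_B^k < c$. Indeed, if some such $G$ failed to be $(k,k_B)$-choosable, its value of $\xi$ would lie in the set whose infimum is $\xi_g(k)$, a contradiction.

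Given an arbitrary bipartite graph $G$ with maximum degree $\Delta$, I would set $m = \lceil (\Delta\log(\Delta)^{k-1}/c)^{1/k}\rceil$, which is $O(\Delta^{1/k}\log(\Delta)^{1-1/k})$ with implicit constant depending only on $k$ (essentially $c^{-1/k}$). Since $\Delta_A, \Delta_B \le \Delta$, we get $\Delta_B\log(\Delta_A)^{k-1}/m^k \le \Delta\log(\Delta)^{k-1}/m^k < c$, so $G$ is $(k, m)$-choosable. For $\Delta$ above a threshold depending only on $k$ we also have $m \ge k$, and then the standard monotonicity of asymmetric choosability — restrict each list on $A$ to a $k$-element subset — upgrades $(k, m)$-choosability to $(m, m)$-choosability, i.e.\ $m$-choosability. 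Thus $\ch(G) \le m = O(\Delta^{1/k}\log(\Delta)^{1-1/k})$.

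There is no genuinely hard step here: the statement is a direct corollary of the definition, and the only thing requiring attention is the bookkeeping at small $\Delta$. When $\Delta$ is below the threshold making $m \ge k$, one falls back on $\ch(G) \le \Delta + 1 = O(1)$, which is still $O(\Delta^{1/k}\log(\Delta)^{1-1/k})$ because that expression is bounded below by a positive constant for $\Delta \ge 2$ (and $\Delta \le 1$ is trivial). One should also note in passing that the case $\Delta_A = 1$, where $\log \Delta_A = 0$, causes no trouble, since it only makes $\xi$ smaller. If anything counts as the ``obstacle,'' it is merely recognizing that the strong hypothesis $\xi_g(k) > 0$ — which the rest of this section stresses is hard to establish — is doing all of the work.
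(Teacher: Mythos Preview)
Your proposal is correct and follows essentially the same route as the paper: unwind the definition of $\xi_g(k)$ to obtain $(k,k_B)$-choosability for $k_B$ on the order of $(\Delta\log(\Delta)^{k-1}/\xi_g(k))^{1/k}$, then pass to $(k',k')$-choosability with $k'=\max(k,k_B)$ by monotonicity. The paper's argument is a one-line version of exactly this; your additional care with the small-$\Delta$ and $\Delta_A=1$ edge cases is harmless extra bookkeeping.
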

\begin{proof}
By the definition of $\xi_g$, if
\[\frac{\Delta \log(\Delta)^{k_A-1}}{k_B^{k_A}} < \xi_g(k_A),\]
then $G$ is $(k_A, k_B)$-choosable, and thus $(k', k')$-choosable for $k' = \max(k_A, k_B)$. So
\[\ch(G) \leq \max\paren*{k_A, \xi_g(k_A)^{-1/k_A} \Delta^{1/k_A} \log(\Delta)^{1-1/k_A} +1},\]
as desired.
\end{proof}
Therefore, a proof that $\xi_g(k_A) > 0$ for any $k_A > 1$ would imply major progress towards resolving \cref{conj:ak}.

Interestingly, even if the positivity of $\xi_g(k_A)$ is assumed, proving analogues to \cref{thm:lopsidedasymptotic} and \cref{thm:eqasymptotic} would not result from a simple reapplication of the techniques of this paper. The main difficulty in doing so is that our use of graph amplification sometimes does not respect the local structure of the graph; in particular, $\Delta_A(G^{\curlyvee r}) = \Delta_A(G) \abs{B(G)}^{r-1}$.


Finally, more complex properties of the $\xi_g(k_B)$ correspond to generalizations of \cref{conj:ak}. In particular, \cref{conj:xig} posits that $\xi_g(k_A) > 0$ for all $k_A$ and that $\lim_{k_A\to\infty} \log(\xi_g(k_A))/k_A$ exists. We conclude the paper with the following claim:

\begin{thm}
Assume \cref{conj:xig}. Then all three parts of \cref{conj:ack} are true.
\end{thm}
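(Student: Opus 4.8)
The plan is to extract from \cref{conj:xig} one clean sufficient criterion for choosability and check all three parts of \cref{conj:ack} against it. First I would observe that $C_0:=\sup_{k\ge 1}\xi_g(k)^{-1/k}$ is finite: each term is finite since $\xi_g(k)>0$, and the limit clause of \cref{conj:xig} makes $\xi_g(k)^{-1/k}=e^{-\log(\xi_g(k))/k}$ converge, hence bounded. Solving $\xi<\xi_g(k_A)$ for $k_B$ then turns \cref{conj:xig} into the implication
\begin{equation*}
 k_B>C_0\,\Delta_B^{1/k_A}\log(\Delta_A)^{1-1/k_A}\implies G\text{ is }(k_A,k_B)\text{-choosable}.\tag{$\star$}
\end{equation*}
I will use $(\star)$ in three modes: directly; with the two parts of $G$ interchanged (so $(k_A,k_B)$-choosability also follows from $\xi(\Delta_B,\Delta_A,k_B,k_A)<\xi_g(k_B)$); and after first decreasing the list sizes, which is legitimate because choosability only improves when the lists grow — so it suffices to make $(\star)$, or its swapped form, apply to some $k_A''\le k_A$, $k_B''\le k_B$.

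For Part (c) I would apply $(\star)$ directly with $C=C_0+1$: when $\Delta_A=\Delta_B=\Delta$, the right side of $(\star)$ equals $C_0\,\Delta^{1/k_A}\log(\Delta)^{1-1/k_A}=C_0\,(\Delta/\log\Delta)^{1/k_A}\log\Delta$, which is $<C\,(\Delta/\log\Delta)^{1/k_A}\log\Delta\le k_B$ (the middle quantity being positive since $\Delta\ge 2$), so $(\star)$ applies. Part (b) also follows from $(\star)$ directly, for any absolute $C>2eC_0$: from $k_A\ge C\log\Delta_B$ we get $\Delta_B^{1/k_A}\le e^{1/C}\le e$, and from $k_B\ge C\log\Delta_A$ with $\Delta_A\ge 2$ we get $\log(\Delta_A)^{1-1/k_A}\le 2\log\Delta_A\le 2k_B/C$, so the right side of $(\star)$ is at most $(2eC_0/C)k_B<k_B$. (Throughout, $k_A\ge 1$ and $\log\Delta_A>0$, so all powers and logarithms behave.)

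Part (a) is the crux, since a direct use of $(\star)$ is hopeless when $\Delta_A\gg\Delta_B$: there $\xi$ can grow faster than exponentially in $k_A$. Instead I would proceed as follows. By symmetry (exchanging the two parts swaps $(\Delta_A,k_A)\leftrightarrow(\Delta_B,k_B)$ and preserves the hypotheses) assume $\Delta_A\ge\Delta_B$. Put $k^*=\lceil 1/\eps\rceil+1$ and choose $\Delta_0$ (a function of $\eps$ only) large enough that $\Delta_0^\eps\ge k^*$ and that $x\ge\Delta_0^\eps$ forces $\tfrac1x\bigl(\tfrac1\eps\log x\bigr)^{k^*-1}<\xi_g(k^*)$; this is possible because $\xi_g(k^*)>0$ is a fixed positive constant and the left side tends to $0$. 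Since $k_B\ge\Delta_B^\eps\ge\Delta_0^\eps\ge k^*$, by the shrinking principle it suffices to show $G$ is $(k_A,k^*)$-choosable, for which — using \cref{conj:xig} with the two parts interchanged — it is enough that $\xi(\Delta_B,\Delta_A,k^*,k_A)=\Delta_A\log(\Delta_B)^{k^*-1}/k_A^{k^*}<\xi_g(k^*)$. Here $k_A\ge\Delta_A^\eps$ gives $\Delta_A\le k_A^{1/\eps}$; since $k^*\ge 1/\eps+1$ this gives $\Delta_A/k_A^{k^*}\le k_A^{1/\eps-k^*}\le 1/k_A$, while $\Delta_B\le\Delta_A\le k_A^{1/\eps}$ gives $\log\Delta_B\le\tfrac1\eps\log k_A$; hence $\Delta_A\log(\Delta_B)^{k^*-1}/k_A^{k^*}\le\tfrac1{k_A}\bigl(\tfrac1\eps\log k_A\bigr)^{k^*-1}$, which is $<\xi_g(k^*)$ because $k_A\ge\Delta_A^\eps\ge\Delta_0^\eps$.

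I expect the main obstacle to be exactly this maneuver in Part (a): one cannot plug the ambient parameters into \cref{conj:xig} directly, but must first pull $k_B$ down to a constant of size $\approx 1/\eps$ and then interchange the two parts, so that $\Delta_A$ enters the relevant $\xi$ only linearly while being overwhelmed by $k_A^{k^*}$ with $k^*>1/\eps$. Incidentally, Parts (b) and (c) invoke the limit clause of \cref{conj:xig} only through the finiteness of $C_0$, while Part (a) uses just the positivity of $\xi_g$ at the single value $k^*=\lceil 1/\eps\rceil+1$.
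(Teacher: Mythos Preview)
Your proof is correct. For parts (b) and (c) it runs parallel to the paper: both extract the single criterion $(\star)$ from the uniform bound $\xi_g(k)>c^k$ (your finite $C_0$ is exactly $1/c$) and verify it directly, with only cosmetic differences in the bookkeeping.

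For part (a) your argument takes a genuinely different route. The paper, after the same symmetry reduction (taking $\Delta_B\ge\Delta_A$), applies $(\star)$ \emph{at the given $k_A$}: from $k_A\ge\Delta_A^\eps$ and $\Delta_A^{-\eps}<\eps/2$ it gets $\Delta_B^{1/k_A}<\Delta_B^{\eps/2}$, and then $\Delta_B^{\eps/2}\log\Delta_B<c\Delta_B^\eps\le ck_B$; in particular it needs the exponential lower bound $\xi_g(k_A)>c^{k_A}$ at the variable $k_A$. You instead (after taking $\Delta_A\ge\Delta_B$) shrink the $B$-side list to a fixed $k^*=\lceil 1/\eps\rceil+1$, swap the two parts, and apply \cref{conj:xig} only at this single value $k^*$, using $\Delta_A\le k_A^{1/\eps}$ and $k^*>1/\eps$ so that $k_A^{k^*}$ overwhelms $\Delta_A$. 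The payoff is exactly what you note at the end: your argument for (a) uses only the positivity of $\xi_g$ at one point rather than the limit clause, so it shows that part (a) of \cref{conj:ack} follows from a strictly weaker hypothesis than the paper's proof uses. The cost is the extra shrinking-and-swapping maneuver, which is a small price.
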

\begin{proof}
We will actually only use the fact that $\xi_g(k_A) > 0$ for all $k_A$ and
\[\liminf_{k_A\to\infty} \log(\xi_g(k_A))/k_A > -\infty,\] 
which is equivalent to $\xi_g(k_A) > c^{k_A}$
for some positive $c$. Therefore, $G$ is $(k_A, k_B)$-choosable whenever
\[\Delta_B^{1/k_A} \log(\Delta_A)^{1-1/k_A} < ck_B.\]
To prove part (a), fix some positive $\eps$. Pick $\Delta_0$ sufficiently large such that $\log \Delta > 1$, $\Delta^{-\eps} < \eps/2$, and $\Delta^{\eps/2}/\log(\Delta) > 1/(c\eps)$ for all $\Delta \geq \Delta_0$. Now assume $\Delta_B \geq \Delta_A > \Delta_0$. Then, if $k_A \geq \Delta_A^\eps$ and $k_B \geq \Delta_B^\eps$, we have
\[\Delta_B^{1/k_A} \log(\Delta_A)^{1-1/k_A} < \Delta_B^{\eps/2} \log(\Delta_B) < c\Delta_B^\eps \leq ck_B,\]
as desired.

To prove part (b), note that since $\log(\Delta_A) \geq \log(2)$, there is some $c'$, namely $c' = c\log (2)$, such that $G$ is $(k_A, k_B)$-choosable whenever $\Delta_B^{1/k_A} \log(\Delta_A) < c' k_B$. This condition is equivalent to $\Delta_B^{1/k_A} \log(\Delta_A^{1/k_B}) < c'$. Therefore, letting $q > 1$ be such that $q \log q < c'$, it is sufficient to show that $\Delta_B^{1/k_A}  , \Delta_A^{1/k_B}\leq q$, which is equivalent to $ k_A \geq \log(\Delta_B)/\log(q)$ and $ k_B \geq \log(\Delta_A)/\log(q)$. Thus $1/\log(q)$ is the desired constant $C$.

Finally, we prove part (c). If $\Delta_A = \Delta_B = \Delta$ and $k_B \geq C(\Delta/\log \Delta)^{1/k_A} \log \Delta$, we have
\[\Delta_B^{1/k_A} \log(\Delta_A)^{1-1/k_A} \leq k_B/C.\]
If $C > 1/c$, then $G$ is $(k_A,k_B)$-choosable.
\end{proof}

\section*{Acknowledgements}
This work was conducted at the REU at the University of Minnesota Duluth, supported by the National Science Foundation grant DMS-1949884 and National Security Agency grant H98230-20-1-0009. The author would like to thank Joseph Gallian for organizing the REU and providing frequent feedback and support. The author would also like to thank Amanda Burcroff for advising and helpful discussions, as well as Mehtaab Sawhney, Sumun Iyer, and the anonymous referees for making valuable suggestions to the manuscript.

\printbibliography

\appendix
\section{Supplemental Proofs}\label{append:ineq}
\subsection{Proof of \texorpdfstring{\cref{lem:tedious}}{Lemma \ref{lem:tedious}}} 
Observe that the inequality holds when $\beta = 0$, so we will be done if we can show that for every $\beta > 0$ the logarithmic derivative of the left hand side is at most the logarithmic derivative of the right hand side. Thus, we want to show that
\[\frac{-\frac{(\gamma - a)^2}{\gamma - b}}{1+\beta\frac{\gamma - a}{\gamma - b}} \leq \frac{-\gamma}{1+\beta} + \frac{a^2/b}{1+\beta a^2/b}.\]
Clearing denominators (all of which are positive), this rearranges to
\[a^2(1+\beta)(\gamma - b + \beta(\gamma - a)) - \gamma(b+\beta a^2)(\gamma - b + \beta(\gamma - a)) + (\gamma-a)^2(1+\beta)(b+\beta a^2) \geq 0.\]
After expanding, we are left with an inequality that is only degree $1$ in $\gamma$, as follows:
\[\gamma((a-b)^2 + \beta a(1-a)(2a-b) + \beta^2 a^2(1-a)) - a^3\beta (1-a)(1+\beta) \geq 0. \tag{$\dagger$} \label{eq:mainmess}\]
The coefficient of $\gamma$ in (\ref{eq:mainmess}) is nonnegative, since it can be rewritten as
\[(1-a)((a-b)^2+\beta a(a-b)+\beta^2 a^2) + a(a-b)^2 + \beta a^2(1-a);\]
thus we only need to check $\gamma = \max(a, b)$ to ensure that the inequality holds for all $\gamma \geq \max(a,b)$. If $a \geq b$ then setting $\gamma = a$ in (\ref{eq:mainmess}) yields
\[a(a-b)(a-b+\beta a(1-a)) \geq 0,\]
which clearly true. If $b \geq a$ then setting $\gamma = b$ in (\ref{eq:mainmess}) yields
\[(b-a)(b(b-a)-\beta a(1-a)(b-a)+\beta^2a^2(1-a)) \geq 0,\]
where the second term is nonnegative as it is equal to
\[ab(b-a)+a(1-a)(b-a)+(1-a)((b-a)^2-\beta a (b-a) + \beta^2 a^2).\]
This concludes the proof.

\subsection{Proof of \texorpdfstring{\cref{lem:exppts}}{Lemma \ref{lem:exppts}}}
We use the \emph{Schwarzian derivative}
\[(Sf)(x) = \frac{f'''(x)}{f'(x)} - \frac{3}{2} \paren*{\frac{f''(x)}{f'(x)}}^2.\]
It is known (see e.g. \cite{schwarz}) that $S(g\circ f)(x) = (Sg)(x) f'(x)^2 + (Sf)(x)$. It is simple to compute $(Sg)(x) = -a^2/2$, so we have $(Sh)(x) < 0$ for all $x$, where $h = g \circ g$. Observe that since $g'(x) < 0$ for all $x$, $h'(x) > 0$ for all $x$.

Suppose there exist at least four distinct solutions to the equation $h(x) = x$. Then, by the mean value theorem there exist $b_1 < b_2 < b_3$ with $h'(b_1) = h'(b_2) = h'(b_3) = 1$. Let $b^*$ be in the interval $[b_1, b_3]$ such that $h'(b^*)$ is minimal. We may further assume that $b^*$ is in the open interval $(b_1, b_3)$, since if $h'$ achieves a minimum at either $b_1$ or $b_3$, we may take $b^* = b_2$.

Now, by the second derivative test, we have $h''(b^*) = 0$ and $h'''(b^*) \geq 0$. This implies that $(Sh)(b^*) \geq 0$, a contradiction.

\end{document}